\documentclass[11pt]{amsart}

\usepackage[T1]{fontenc}
\usepackage[utf8]{inputenc}
\usepackage{lmodern}
\usepackage{amsmath,amssymb,amsthm,mathtools}
\usepackage{enumitem}
\usepackage{geometry}
\usepackage{microtype}
\usepackage[colorlinks=true,linkcolor=blue,citecolor=blue,urlcolor=blue]{hyperref}

\newtheorem{theorem}{Theorem}[section]
\newtheorem{proposition}[theorem]{Proposition}
\newtheorem{lemma}[theorem]{Lemma}
\newtheorem{corollary}[theorem]{Corollary}
\newtheorem{definition}[theorem]{Definition}
\newtheorem{remark}[theorem]{Remark}

\numberwithin{equation}{section}

\newcommand{\N}{\mathbb N}
\newcommand{\Nzero}{\mathbb N_0}
\newcommand{\Z}{\mathbb Z}
\newcommand{\R}{\mathbb R}
\newcommand{\C}{\mathbb C}
\newcommand{\supp}{\operatorname{supp}}
\newcommand{\Aut}{\operatorname{Aut}}
\newcommand{\Fix}{\operatorname{Fix}}
\newcommand{\ord}{\operatorname{ord}}

\newcommand{\Cat}{\mathsf{Cat}}
\newcommand{\Ehr}{\operatorname{Ehr}}

\newcommand{\M}{\mathsf M}
\newcommand{\calF}{\mathcal F}
\newcommand{\calE}{\mathcal E}
\newcommand{\calP}{\mathcal P}
\newcommand{\calO}{\mathcal O}

\newcommand{\bdot}{\boldsymbol{\cdot}}
\newcommand{\qbinom}[2]{\genfrac{[}{]}{0pt}{}{#1}{#2}_{q}}

\title[Zero-sum polytopes]{On zero-sum polytopes: reciprocity, rigidity, and cyclic sieving}

\author{Dongchun Han}
\author{Xuan Wang}
\author{Hanbin Zhang}
\author{Shiwen Zhang}

\keywords{zero-sum sequence, Ehrhart theory, combinatorial reciprocity, rigidity, cyclic sieving phenomenon}

\begin{document}

\begin{abstract}
Let $G$ be a finite abelian group of order $n$, and let $\M(G,m)$ denote the set of zero-sum sequences over $G$ of length $m$. We introduce the \emph{zero-sum polytope} $\calP_G$, a rational polytope of dimension $n-1$, whose lattice points encode zero-sum sequences:
\[
        |\M(G,m)|=|m\calP_G\cap \Z^n|.
\]
This naturally realizes the enumeration of zero-sum sequences as a problem in rational Ehrhart theory, which leads to a combinatorial reciprocity theorem identifying the negative evaluations of the corresponding counting quasipolynomial with zero-sum sequences of full support. Our main results establish a face-stratified rigidity  for zero-sum polytopes: whenever two such polytopes have equal total lattice point counts at specific dilations,
the dimension-wise open-face strata are equinumerous. Moreover, we study the natural $\Aut(G)$-action on $\calP_G$, derive equivariant generating functions and reciprocity formulas, and obtain cyclic sieving phenomena for natural cyclic actions.
\end{abstract}

\maketitle

\section{Introduction}

Let $G$ be a finite abelian group, written additively, and let $|G|=n$.
Let $S=g_1\bdot\ldots\bdot g_k$ be a {\it sequence} over $G$ (unordered, with repetitions allowed), where $g_1,\ldots,g_k\in G$ and $k$ is called the {\it length} of $S$. We define $\sigma(S)=g_1+\cdots+g_k$. We say that $S$ is a {\it zero-sum sequence} if $\sigma(S)=0_G$, the identity element of $G$. Zero-sum theory is a classical and active branch of additive number theory; see \cite{GaoGeroldingerSurvey} for a survey, and see \cite{GeroldingerMonthly,GeroldingerHalterKoch,GeroldingerGrynkiewiczZhong,GeroldingerZhong} and \cite{CziszterDomokosGeroldinger} for its interactions with factorization theory and invariant theory. For a positive integer $m$, let
\[
        \M(G,m):=\{S:\ |S|=m,\ \sigma(S)=0_G\}.
\]

The enumeration of $\M(G,m)$ has appeared in several contexts. In 1975 Fredman \cite{Fredman} proved the reciprocity
\begin{equation}\label{eq:Fredman}
        |\M(C_n,m)|=|\M(C_m,n)|,
\end{equation}
where $C_n$ denotes the cyclic group of order $n$. Elashvili, Jibladze and Pataraia \cite{ElashviliJibladze,ElashviliJibladzePataraia} later rediscovered this identity using ideas from invariant theory going back to Molien \cite{Molien} and a result of Almkvist and Fossum \cite{AlmkvistFossum}; also see \cite{Panyushev}. It was remarked in \cite[Introduction]{ElashviliJibladzePataraia} that Alon also independently proved (\ref{eq:Fredman}) using a necklace interpretation. Meanwhile, Andrews, Alon and Stanley independently obtained the counting formula for $\mathsf M(C_n,m)$; see \cite[Introduction and Section 3]{ElashviliJibladzePataraia}. In general, one has the counting formula
\begin{equation}\label{eq:MGFormula-intro}
        |\M(G,m)|=\frac{1}{n+m}\sum_{d\mid (n,m)}\varphi_G(d)
        \binom{n/d+m/d}{n/d},
\end{equation}
where $\varphi_G(d)$ denotes the number of elements of $G$ of order $d$; see \cite{HanZhang2021,MuratovicWang}. Han and Zhang \cite{HanZhang2021} gave a combinatorial proof, using rational Dyck paths, of the fact that if $G$ and $H$ are finite abelian groups with $(|G|,|H|)=1$, then
\begin{equation}\label{eq:coprime-reciprocity}
        |\M(G,|H|)|=|\M(H,|G|)|=\Cat_{|G|,|H|},
\end{equation}
where, for coprime positive integers $n,m$,
\(
        \Cat_{n,m}:=\frac{1}{n+m}\binom{n+m}{n}
\)
is the rational Catalan number. Li and Zhang \cite{LiZhang} further characterized the above reciprocity \eqref{eq:coprime-reciprocity} and related these results to the study of the group permanents.

The enumeration of zero-sum sequences is equivalent to counting nonnegative integral solutions of linear equations in finite abelian groups, which implies that the set of zero-sum sequences may naturally reveal an underlying discrete-geometric structure. A sequence $S$ over $G$ is encoded by its multiplicity vector $(a_g)_{g\in G}\in \mathbb Z_{\ge 0}^G$. The zero-sum condition is the linear equation
\(
        \sum_{g\in G} a_g g=0_G
\) in $G$. Let
\[
        \Delta_G:=\left\{x=(x_g)_{g\in G}\in \R^G_{\ge 0}:\sum_{g\in G}x_g=1\right\}
\]
be the standard simplex on the index set $G$, and define the zero-sum lattice
\[
        \Lambda_G:=\left\{(a_g)_{g\in G}\in \Z^G:\sum_{g\in G}a_g g=0\right\}.
\]
Then multiplicity vectors give a bijection
\[
        \M(G,m)\longleftrightarrow m\Delta_G\cap \Lambda_G.
\]
Since $\Lambda_G$ is a full-rank sublattice of $\Z^G$, choose a linear isomorphism $T:\R^G\to \R^n$ such that $T(\Lambda_G)=\Z^n$, and set
\[
        \calP_G:=T(\Delta_G).
\]
Then $\calP_G$ is a rational simplex of dimension $n-1$ and
\[
        |\M(G,m)|=|m\calP_G\cap \Z^n|.
\]
We call $\calP_G$ the \emph{zero-sum polytope} of $G$.
Although its face lattice is that of a simplex, the finite-index lattice induced by the zero-sum condition carries arithmetic information about $G$. Thus the geometry is combinatorially simple but arithmetically rich.

Ehrhart theory has been an important topic in discrete geometry and algebraic combinatorics, bridging the discrete world of lattice points with the continuous geometric world of convex polytopes. If $\mathcal P\subseteq \R^d$ is a lattice polytope, Ehrhart \cite{Ehrhart}  proved that the counting function
\[
L_{\mathcal{P}}(t)=|t\mathcal{P}\cap\mathbb{Z}^d|,\qquad t\in\mathbb{N},
\]
is a polynomial in \(t\) of degree \(d\). The polynomial $L_{\mathcal{P}}(t)=c_d t^d+c_{d-1}t^{d-1}+\cdots+c_0$, now called the {\it Ehrhart polynomial}, encodes fundamental geometric information: its leading coefficient is the volume of \(\mathcal{P}\), its second coefficient is half the surface area (suitably normalized), and its constant term is the Euler characteristic of \(\mathcal{P}\). The famous Ehrhart--Macdonald reciprocity \cite{Macdonald} relates the Ehrhart polynomial of a polytope to that of its interior:
\[
L_{\mathcal{P}}(-t)=(-1)^d L_{\mathcal{P}^\circ}(t),\qquad t\in\mathbb{N},
\]
where $\mathcal{P}^\circ$ denotes the interior of $\mathcal P$. This reciprocity is one of the most classical examples of  combinatorial reciprocity. We refer to \cite{BeckRobins} for background on Ehrhart theory and to \cite{Beck1,Beck2,BeckZas,BorgerKretschmerNill,ClarkeKolbl,DAliJuhnkeKoch,deVriesYoshinaga,GustafssonSolus,Hibi,MaximSchuermann,Stanley1980,Stapledon2008} for some classical results and recent developments. More generally, it is also natural and interesting to consider $|t\mathcal{P}\cap\Lambda|$ where $\Lambda$ is a full-rank sublattice of $\mathbb{Z}^d$; see, e.g., \cite{LagariasZiegler} for a classical result of Lagarias and Ziegler.
In this case, there exists a rational polytope $\mathcal P'$ (its vertices have rational coordinates) such that $|t\mathcal{P}\cap\Lambda|=|t\mathcal{P}'\cap\mathbb Z^d|$.
The study of rational polytopes and their counting functions \(L_{\mathcal{P}}(t)\) (which is no longer a polynomial but a quasipolynomial) is call rational Ehrhart theory; we refer to \cite{BeckEliaRehberg} for a recent exposition on rational Ehrhart theory.

The purpose of this paper is to develop the above Ehrhart-theoretic model for zero-sum sequences. We obtain reciprocity theorems, face-stratified rigidity results, and equivariant refinements under automorphism-group actions. Note that, although the polytope $\calP_G$ depends on the chosen basis of $\Lambda_G$, its lattice-equivalence class, Ehrhart quasipolynomial, and face-stratified lattice-point counts are independent of this choice.

\subsection{Combinatorial reciprocity}

A central phenomenon in enumerative combinatorics is \emph{combinatorial reciprocity} \cite{StanleyReciprocity}: a polynomial, or more generally a generating function, which counts combinatorial objects at positive integer arguments often acquires, after evaluation at negative integers and up to a predictable sign, an interpretation as counting a related family of objects. Classical examples include chromatic polynomials, Ehrhart polynomials, and order polynomials. See \cite{BeckSanyal} for an excellent exposition.

For $\M(G,m)$, the formula \eqref{eq:MGFormula-intro} (as a quasipolynomial, which can be evaluated at $-m$) leads to the following reciprocity theorem.

\begin{theorem}\label{thm:main-reciprocity}
Let $G$ be a finite abelian group of order $|G|=n$. For every positive integer $m$, one has
\begin{equation}\label{eq:ZS-reciprocity}
        |\M(G,-m)|=(-1)^{n-1}|\M^\circ(G,m)|,
\end{equation}
where
\[
        \M^\circ(G,m):=\{S\in \M(G,m):\supp(S)=G\}
\]
is the set of zero-sum sequences of length $m$ in which every element of $G$ occurs with positive multiplicity.
\end{theorem}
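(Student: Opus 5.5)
The plan is to realize $|\M(G,m)|$ as the Ehrhart quasipolynomial of the rational simplex $\calP_G$ and then apply Ehrhart--Macdonald reciprocity for rational polytopes. Here $|\M(G,-m)|$ means the evaluation at $-m$ of the quasipolynomial $L_{\calP_G}(t)=|t\calP_G\cap\Z^n|$, which agrees with \eqref{eq:MGFormula-intro} for $t\ge1$.

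\textbf{Step 1: quasipolynomiality.} Since $\calP_G=T(\Delta_G)$ with $T(\Lambda_G)=\Z^n$, we have $L_{\calP_G}(m)=|m\Delta_G\cap\Lambda_G|=|\M(G,m)|$. The vertices of $\Delta_G$ are the standard basis vectors $e_g$. Moreover $n e_g\in\Lambda_G$, because $n\cdot g=0$. Hence the vertices of $\calP_G$ lie in $\frac1n\Z^n$, so $\calP_G$ is a rational $(n-1)$-simplex of denominator dividing $n$. By Ehrhart's theorem for rational polytopes, $L_{\calP_G}$ is a quasipolynomial of degree $n-1$ with period dividing $n$.

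We also want to see directly that \eqref{eq:MGFormula-intro} is this quasipolynomial. For fixed $d\mid n$, the term $\frac{1}{n+m}\binom{n/d+m/d}{n/d}$ equals $\frac{1}{n}\binom{n/d+m/d-1}{n/d-1}$. This is a polynomial in $m$ on the residue class $d\mid m$, and we set it to zero otherwise. So the formula extends canonically to all integers $m$.

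\textbf{Step 2: reciprocity.} Apply the Ehrhart--Macdonald reciprocity theorem for rational polytopes (as in \cite{BeckRobins} or \cite{BeckEliaRehberg}):
\[
L_{\calP_G}(-m)=(-1)^{\dim\calP_G}L_{\calP_G^\circ}(m),
\]
with the interior taken relative to the affine span of $\calP_G$. Since $\dim\calP_G=n-1$, the sign is $(-1)^{n-1}$.

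\textbf{Step 3: identify the relative interior.} $T$ is a linear isomorphism, so it carries the relative interior of $m\Delta_G$ onto that of $m\calP_G$. The relative interior of $m\Delta_G$ is $\{x:\sum_g x_g=m,\ x_g>0 \text{ for all } g\}$. Therefore
\[
L_{\calP_G^\circ}(m)=|\{a\in\Lambda_G:\textstyle\sum a_g=m,\ a_g\ge1\}|=|\M^\circ(G,m)|,
\]
since $a_g\ge 1$ for all $g$ says exactly that $\supp(S)=G$.

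\textbf{Main obstacle.} The delicate point is making sure that ``$|\M(G,-m)|$'' means the same quasipolynomial evaluation as in the rational Ehrhart reciprocity theorem. A rational polynomial formula such as \eqref{eq:MGFormula-intro} could in principle have several extensions to negative integers. The way around this is to define $|\M(G,-m)|$ as $L_{\calP_G}(-m)$. We then check that on each residue class mod $n$ the constituent of $L_{\calP_G}$ is given by the $d$-sum in Step 1: both are polynomials on that class that agree at infinitely many positive integers, so they coincide. With that in place, Steps 2 and 3 are formal.

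As a sanity check, one can also verify \eqref{eq:ZS-reciprocity} directly. Subtracting one copy of each element of $G$ gives a bijection between $\M^\circ(G,m)$ and sequences of length $m-n$ with sum $-\sigma(G)$. Then compare with $\binom{-x}{k}=(-1)^k\binom{x+k-1}{k}$ applied termwise. The terms with $d$ dividing both $n$ and $m$ must match, and the factor $\sigma(G)$ (which is $0$ unless $G$ has a unique involution) must be handled. The Ehrhart route avoids this bookkeeping, so it is the proof I would write.
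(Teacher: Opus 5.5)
Your proposal is correct and is essentially the paper's Ehrhart-theoretic proof. You identify $|\M(G,m)|$ with $L_{\calP_G}(m)$, apply Ehrhart--Macdonald reciprocity to the rational $(n-1)$-simplex $\calP_G$, and match relative-interior lattice points with full-support zero-sum sequences via $T$. Your extra check, that on each residue class the constituents of $L_{\calP_G}$ agree with the extended counting formula, is a point the paper leaves implicit (it simply takes $|\M(G,-m)|$ to mean $L_{\calP_G}(-m)$). The paper also gives a separate direct character-sum proof, which carries out the bookkeeping you sketch in your sanity check.
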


We give two proofs in Section~\ref{sec:reciprocity}. The first is a direct proof from the counting formula, and the second shows that \eqref{eq:ZS-reciprocity}
is a manifestation of the classical
Ehrhart--Macdonald reciprocity theorem, which provides a conceptual explanation for the appearance of Ehrhart theory in the enumeration of
zero-sum sequences. We also prove an equivariant analogue for invariant zero-sum sequences; see Theorem~\ref{thm:equiv-zero-sum-reciprocity}.

\subsection{Face-stratified rigidity}

Let $\mathcal F(\calP_G)$ denote the set of faces of $\calP_G$. For a face $F\in \mathcal F(\calP_G)$, write $F^\circ$ for its relative interior; see Section~2 for the precise definitions.  With this
notation, we state our first main result, which may be viewed as a rigidity
phenomenon for the face-stratified lattice-point counts of the polytopes
$\mathcal P_G$.

\begin{theorem}\label{thm:main-rigidity}
Let $G$ and $H$ be finite abelian groups with $|G|=|H|=n$, and let $m$ be a positive integer with $m\ge n$. If
\[
        |m\calP_G\cap \Z^n|=|m\calP_H\cap \Z^n|,
\]
then
\[
        |m\calP_G^\circ\cap \Z^n|=|m\calP_H^\circ\cap \Z^n|.
\]
Moreover, for every $0\le k\le n-1$, one has
\[
        \sum_{\substack{F\in \mathcal F(\calP_G)\\ \dim F=k}} |mF^\circ\cap \Z^n|
        =
        \sum_{\substack{F\in \mathcal F(\calP_H)\\ \dim F=k}} |mF^\circ\cap \Z^n|.
\]
\end{theorem}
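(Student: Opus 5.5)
The plan is to reduce everything to the numbers $\varphi_G(d)$ for $d\mid(n,m)$. Concretely, I will show that (i) all face-stratified counts at dilation $m$ are determined by the vector $(\varphi_G(d))_{d\mid(n,m)}$, and (ii) when $m\ge n$ the single equality $|\M(G,m)|=|\M(H,m)|$ already forces $\varphi_G(d)=\varphi_H(d)$ for every $d\mid(n,m)$. Both conclusions of Theorem~\ref{thm:main-rigidity} then follow, the interior being the case $k=n-1$.

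\emph{Step 1 (faces as supports).} Since $T$ is linear and $\calP_G=T(\Delta_G)$, the faces of $\calP_G$ are the images of the faces $\Delta_A$ of $\Delta_G$, one for each nonempty $A\subseteq G$, with $\dim=|A|-1$. Lattice points of $mF^\circ$ correspond to multiplicity vectors in $\Lambda_G$ with support exactly $A$. Hence
\[
\sum_{\dim F=k}|mF^\circ\cap\Z^n|=\#\{S\in\M(G,m):|\supp(S)|=k+1\}.
\]

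\emph{Step 2 (bivariate character formula).} I will mark each element occurring in $S$ by a variable $y$ and use orthogonality of characters $\chi\in\widehat G$. This gives
\[
\sum_{m,S}x^{|S|}y^{|\supp S|}[\sigma(S)=0]=\frac1n\sum_{\chi\in\widehat G}\prod_{g\in G}\frac{1-(1-y)\chi(g)x}{1-\chi(g)x}.
\]
If $\chi$ has order $d$, then $\chi(g)$ runs over the $d$-th roots of unity, each exactly $n/d$ times, so the product equals
\[
\left(\frac{1-((1-y)x)^d}{1-x^d}\right)^{n/d}.
\]
This is a series in $x^d$, so only $d\mid m$ contributes to the coefficient of $x^m$. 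The number of characters of order $d$ equals $\varphi_G(d)$, because $\widehat G\cong G$. Therefore the coefficient of $x^my^{k+1}$ is
\[
\frac1n\sum_{d\mid(n,m)}\varphi_G(d)\,c_{d,k}(n,m),
\]
where $c_{d,k}(n,m)$ is independent of the group. This proves (i). Setting $y=1$ recovers \eqref{eq:MGFormula-intro} (after checking $\frac1n\binom{n/d+m/d-1}{m/d}=\frac1{n+m}\binom{(n+m)/d}{n/d}$).

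\emph{Step 3 (extracting $\varphi$ from one value).} Put $e(d)=\varphi_G(d)-\varphi_H(d)$ for $d\mid(n,m)$. Then $e(1)=0$ and $|e(d)|<n$. Writing $B_d=\binom{(n+m)/d}{n/d}$, the hypothesis together with \eqref{eq:MGFormula-intro} gives
\[
\sum_{d\mid(n,m)}e(d)B_d=0.
\]
I will argue by contradiction. Let $d_0$ be the smallest $d$ with $e(d)\ne0$. It then suffices to prove the domination inequality
\[
B_{d_0}> n\sum_{d_0<d\mid(n,m)}B_d,
\]
which contradicts the vanishing sum.

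For $d\ge 2d_0$ I would estimate $B_d\le B_{2d_0}$. I would then use the ratio
\[
\frac{\binom{2N}{2a}}{\binom{N}{a}}\quad\text{with } N=(n+m)/(2d_0),\ a=n/(2d_0),
\]
for which $\binom{2N}{2a}\ge\binom{N}{a}^2$ holds (choose $a$ from each half). This gives
\[
\frac{B_{d_0}}{B_{2d_0}}\ge\binom{(n+m)/(2d_0)}{n/(2d_0)}.
\]
The right-hand side is large when $m\ge n$, and it must beat $n$ times the number of divisors of $n$.

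This domination estimate is where I expect the main obstacle. Small cases need care: when $n/d$ is $1$ or $2$ the binomials are polynomial in $m$ rather than exponential. There, $m\ge n$ is exactly what should make $B_{d_0}$ exceed $n\cdot\tau(n)\cdot B_{d}$. I would try a direct monotonicity argument first, and verify the few extremal cases (e.g. $d=n$ or $d=n/2$) by hand.

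Once $e\equiv0$ is established, Step 2 yields equality of every face-stratified count, and in particular the interior counts, as claimed in Theorem~\ref{thm:main-rigidity}.
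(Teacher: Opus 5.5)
Your Steps 1 and 2 are correct and coincide with the paper (Propositions~\ref{prop:support-geometric} and \ref{prop:support-polynomial-formula}). The gap is Step 3, which is the whole content of Lemma~\ref{lem:rigidity-order-distribution}. The domination inequality $B_{d_0}>n\sum_{d_0<d\mid(n,m)}B_d$ is false, even for discrepancies that genuinely occur with $m\ge n$:
\begin{itemize}
\item For $G=C_4$, $H=C_2\oplus C_2$, $n=m=4$, one has $d_0=2$ and $B_2=\binom42=6<8=4B_4$.
\item For $G=C_{12}$, $H=C_2\oplus C_6$, $n=m=12$, one has $d_0=2$ and $B_2=924<12(70+20+6+2)=1176$.
\item For $G=C_4\oplus C_4$, $H=C_2\oplus C_8$, $n=m=16$, one has $d_0=4$ and $B_4=70<16(B_8+B_{16})=128$.
\end{itemize}

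More fundamentally, the only information you use about $e$ is $e(1)=0$ and $|e(d)|<n$, and with that alone the conclusion is false. For even $n\ge4$ and $m=n$, the vector with $e(n/2)=1$, $e(n)=-3$ and all other entries $0$ satisfies $\sum_d e(d)B_d=6-6=0$. So no purely binomial estimate can finish the argument; you need group-theoretic constraints on where the discrepancies sit and how large they are.

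Your sketch also says nothing about divisors $d$ with $d_0<d<2d_0$ (e.g.\ $d_0=2$, $d=3$ in the $C_{12}$ example). For $m=n$, the ratio $B_{d_0}/B_d$ there is only roughly $4^{n(1/d_0-1/d)}$, which need not exceed $n$. The bound via $B_{2d_0}$ covers only $d\ge2d_0$. Even there, $\binom{(n+m)/(2d_0)}{n/(2d_0)}$ is just $\binom42=6$ when $d_0=n/4$ and $m=n$, as in the $C_4\oplus C_4$ example.

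The paper supplies exactly this missing input:
\begin{itemize}
\item Minimal discrepancies are prime powers (Lemma~\ref{lemma:min_ord}).
\item At the first discrepancy $a$ one has $|e(a)|\ge a$, not merely $\ge1$ (Lemma~\ref{lemma:min-order_p}).
\item At the first discrepancy $b=q^t$ of the opposite sign one has $q^{t+1}\mid n$ and $|e(b)|\le q^\beta-q^t$ (same lemma).
\item The total negative weight is bounded by $\sum\varphi_H(e)\le n$, rather than by $n$ per divisor.
\item Positive discrepancies are retained (the term $S_{\mathcal F}$) instead of being treated as adversarial.
\item The delicate range $a<b<2a$ is handled by Lemma~\ref{lemma:Impartant_estimation} together with a chaining argument through the prime powers in $(b,2b)$.
\end{itemize}
For instance, with $|e(d_0)|\ge d_0$ your examples stop being obstructions ($2\cdot6>8$, $2\cdot924>1176$, $4\cdot70>128$). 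This shows that sharper structural information about $e$, not a finer binomial estimate, is what your Step 3 lacks.
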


Theorem~\ref{thm:main-rigidity} says that equality of the total lattice-point
counts of $\mathcal P_G$ and $\mathcal P_H$ at a single dilation $m$ forces
a much stronger equality: the lattice points in every relative open face
stratum, grouped by dimension, are also equinumerous. This establishes a rigidity phenomenon (rough information uniquely determines finer structure) on zero-sum polytopes. Equivalently, the
equality
$L_{\mathcal P_G}(m)=L_{\mathcal P_H}(m)$
implies $ L_{\mathcal P_G^{\circ}}(m)
=
(-1)^{n-1}L_{\mathcal P_G}(-m)
=
(-1)^{n-1}L_{\mathcal P_H}(-m)
=
L_{\mathcal P_H^{\circ}}(m).$ It is worth emphasizing that Theorem~\ref{thm:main-rigidity}  is not vacuous. For
many choices of $n$ and $m>n$, there exist non-isomorphic finite abelian
groups $G$ and $H$ of order $n$ such that
\[
|\mathsf M(G,m)|=|\mathsf M(H,m)|.
\]
For instance, when $(n,m)=1$, the formula \eqref{eq:MGFormula-intro} gives
\[
|\mathsf M(G,m)|=|\mathsf M(H,m)|=\mathsf{Cat}_{n,m}
\]
for all abelian groups $G$ and $H$ of order $n$. Further examples are
provided by \cite[Theorem~1.5]{WangZhangZhang}. 
Theorem~\ref{thm:main-rigidity} shows that such coincidences automatically extend to the finer support-stratified lattice-point structure. Moreover, in Remark \ref{finercorr}, we show that the rigidity in Theorem~\ref{thm:main-rigidity} cannot be strengthened to a face-to-face correspondence.

We also prove a reciprocal-dilation version for groups of different orders.

\begin{theorem}\label{thm:main-reciprocal-rigidity}
Let $G$ and $H$ be finite abelian groups with $|G|=n$ and $|H|=m$. Suppose that
\[
        |m\calP_G\cap \Z^n|=|n\calP_H\cap \Z^m|.
\]
Then, for every $0\le k<\min\{n,m\}$, one has
\[
        \sum_{\substack{F\in \mathcal F(\calP_G)\\ \dim F=k}} |mF^\circ\cap \Z^n|
        =
        \sum_{\substack{F\in \mathcal F(\calP_H)\\ \dim F=k}} |nF^\circ\cap \Z^m|.
\]
\end{theorem}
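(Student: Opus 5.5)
The plan is to reduce both sides to counts of zero-sum sequences stratified by support size, write each count as a character sum, and compare the two sides term by term through the function $\varphi$. The comparison step is only partly worked out; that is flagged below.

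\textbf{Step 1: reduction to support sizes.} Since $\calP_G=T(\Delta_G)$ with $T(\Lambda_G)=\Z^n$, its faces are the images of the faces $\Delta_A=\{x\in\Delta_G:\supp(x)\subseteq A\}$ for nonempty $A\subseteq G$, and $\dim T(\Delta_A)=|A|-1$. A lattice point of $mT(\Delta_A)^\circ$ is the image of a vector in $\Lambda_G$ with support exactly $A$ and coordinate sum $m$, i.e.\ a zero-sum sequence of length $m$ with $\supp(S)=A$. So the left-hand side of Theorem~\ref{thm:main-reciprocal-rigidity} equals
\[
N_G(m,j):=|\{S\in\M(G,m):|\supp(S)|=j\}|,\qquad j=k+1.
\]
Likewise the right-hand side equals $N_H(n,j)$. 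The restriction $k<\min\{n,m\}$ just ensures that both strata can be nonempty. The hypothesis reads $|\M(G,m)|=|\M(H,n)|$, i.e.\ $\sum_j N_G(m,j)=\sum_j N_H(n,j)$.

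\textbf{Step 2: an explicit formula for $N_G(m,j)$.} Apply the character filter $\mathbf 1_{\sigma=0}=\frac1n\sum_{\chi\in\widehat G}\chi(\sigma)$ to the set of pairs $(A,(a_g)_{g\in A})$ with $|A|=j$, $a_g\ge1$, $\sum a_g=m$. For $\chi$ of order $d$ the values $\chi(g)$ run over the $d$-th roots of unity, each exactly $n/d$ times. The inner sum is therefore the coefficient of $x^m$ in the degree-$j$ elementary symmetric function evaluated at $\{\zeta x/(1-\zeta x)\}$, where each $\zeta^{d}=1$ occurs $n/d$ times.

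Write $y=x/(1-x)$ and use $\prod_{\zeta}(1+t\zeta x/(1-\zeta x))$ over the $d$-th roots $\zeta$, which equals
\[
\left(\frac{1-(1-t)^d x^d}{1-x^d}\right)^{\!n/d}.
\]
Extracting coefficients shows the contribution vanishes unless $d\mid m$ and $d\mid j$. When it survives it is an explicit product of the shape $c_d\binom{n/d}{j/d}\binom{m/d}{j/d}$ up to a simple rational factor. Since $\#\{\chi\in\widehat G:\ord\chi=d\}=\varphi_G(d)$, this yields a formula of the form
\[
N_G(m,j)=\sum_{d\mid(n,m,j)}\varphi_G(d)\,B(n,m,j,d).
\]
Summing over $j$ should recover \eqref{eq:MGFormula-intro}, which serves as a check. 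As a sanity check, $N_G(m,1)=\#\{g:mg=0\}=\sum_{d\mid(n,m)}\varphi_G(d)$.

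\textbf{Step 3: comparison, and the main obstacle.} I expect $B(n,m,j,d)$ to be symmetric under $n\leftrightarrow m$ after the normalisation. Then
\[
N_G(m,j)-N_H(n,j)=\sum_{d\mid(n,m,j)}\bigl(\varphi_G(d)-\varphi_H(d)\bigr)B(n,m,j,d),
\]
and the hypothesis is the analogous identity with the total weights $\sum_j B$.

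The main obstacle is that equality of one weighted sum does not obviously force equality of each $j$-stratum. I would try the following, in order.
\begin{enumerate}
\item Show that $\varphi_G(d)=\varphi_H(d)$ for every $d\mid(n,m)$ with $d>1$ is automatic, or is forced by the hypothesis. The groups $G$ and $H$ have different orders, so only divisors of $\gcd(n,m)$ enter, and $\varphi_G(1)=\varphi_H(1)=1$.
\item If that fails, exploit that the $d$-term of the total is a fixed multiple of $\binom{n/d+m/d}{n/d}$. Using positivity or $p$-adic size of these binomials across the divisor chain of $(n,m)$, attempt to separate the terms so that each difference $\varphi_G(d)-\varphi_H(d)$ is weighted consistently in every stratum.
\end{enumerate}
If the separation works, the stratum identities for $0\le k<\min\{n,m\}$ follow by substituting into the formula of Step 2.
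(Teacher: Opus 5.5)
Your Step~1 is correct, but the proof has a genuine gap exactly where you flag it, and neither route in Step~3 closes it. The hypothesis is a single scalar equation. The paper splits it into strata by first deducing
\[
|\M(G,m)|=|\M(H,n)|\ \Longrightarrow\ \varphi_G(d)=\varphi_H(d)\quad\text{for every } d\mid (n,m),
\]
which is the theorem of Li and Zhang (Theorem~\ref{thm:Li-Zhang}); the paper imports this rather than proving it.

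Your option~(1) cannot be ``automatic''. For $G=C_4$ and $H=C_2\oplus C_4$ one has $d=2\mid(4,8)$ but $\varphi_G(2)=1\neq 3=\varphi_H(2)$. And ``forced by the hypothesis'' is precisely the hard arithmetic content of the theorem. One must take the smallest divisor $a$ at which the order distributions differ and show that $|\varphi_G(a)-\varphi_H(a)|\binom{(n+m)/a}{n/a}$ strictly dominates the combined contribution of all later discrepancies. This needs sharp ratio bounds for these binomial coefficients and a chaining argument through prime powers; Appendix~\ref{app:comparison} carries this out in the equal-order case. Your option~(2) only gestures at this, so as written nothing beyond the reduction in Step~1 is established.

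Granting that input, what remains is the $n\leftrightarrow m$ symmetry you expect but do not verify. Put $A=n/d$, $B=m/d$ and $C_d(A,B)=[u^B]\bigl((1-(1-q)^d u)/(1-u)\bigr)^A$. Then the $d$-blocks satisfy $\frac1n C_d(A,B)=\frac1m C_d(B,A)$ coefficientwise in $q$, because $B\binom{A}{i}\binom{A+B-i-1}{B-i}=A\binom{B}{i}\binom{A+B-i-1}{A-i}$ for every $i$ (Proposition~\ref{prop:reciprocal-support}).

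Your Step~2 formula is also wrong: the $d$-block does not vanish for $d\nmid j$. For $d\mid m$,
\[
[t^jx^m]\Bigl(\frac{1-(1-t)^dx^d}{1-x^d}\Bigr)^{n/d}=\sum_i\binom{n/d}{i}(-1)^{i+j}\binom{di}{j}\binom{(n+m)/d-i-1}{m/d-i},
\]
which involves $\binom{di}{j}$, not a divisibility condition $d\mid j$. Your own check $N_G(m,1)=\sum_{d\mid(n,m)}\varphi_G(d)$ already contradicts the restriction to $d\mid(n,m,j)$, since for $j=1$ that restriction would leave only $d=1$. The claimed shape $c_d\binom{n/d}{j/d}\binom{m/d}{j/d}$ should therefore be discarded before attempting the symmetry check.
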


Theorem~\ref{thm:main-reciprocal-rigidity} says that the same rigidity principle persists even when the two zero-sum polytopes have different ambient dimensions. Equality of the total counts at the reciprocal dilations determined by the group orders forces equality of the corresponding face-stratified counts in every common dimension.

\subsection{Equivariant Ehrhart theory and cyclic sieving}

We next study zero-sum polytopes from an equivariant point of view. Equivariant Ehrhart theory, initiated by Stapledon \cite{Stapledon2011}, refines classical Ehrhart theory by incorporating symmetries of the polytope. If a finite group $\Gamma$ acts on a lattice $\Lambda$ and preserves a rational polytope $P$, then instead of considering only the scalar counting function
\(
       |rP\cap \Lambda|,
\)
one considers the permutation representation of $\Gamma$ on the finite set $rP\cap \Lambda$, or equivalently its character
\[
        \gamma\longmapsto |(rP\cap \Lambda)^\gamma|,\qquad \gamma\in\Gamma.
\]
Since Stapledon's foundational work, equivariant Ehrhart theory has developed in several directions. Explicit calculations and structural results have been obtained for families such as permutahedra \cite{ArdilaSupinaVindas}; further progress has used symmetric triangulations, zonotopal decompositions, and invariant hypersurfaces \cite{EliaKimSupina}; positivity phenomena have been established for translative group actions \cite{DAliDelucchi}; and recent work has advanced the theory for hypersimplices and invariant triangulations \cite{ClarkeKolbl,Stapledon2023}.

In this paper we apply this perspective to the natural $\Aut(G)$-action on $\calP_G$. For each automorphism of $G$, we describe the corresponding fixed-point support and derive explicit Ehrhart generating functions for invariant lattice points. We also prove an equivariant reciprocity theorem for invariant zero-sum sequences. Thus the ordinary enumeration of zero-sum sequences is refined to a symmetry-sensitive enumeration, in which lattice points are counted according to their stabilizers under the automorphism group.

A particularly important and interesting case occurs when the action is given by a cyclic group. Recall that if $X$ is a finite set equipped with an action of a cyclic group $C_N=\langle c\rangle$ of order $N$, and if $f(q)\in \Z_{\ge 0}[q]$ satisfies $f(1)=|X|$, then the triple
\[
        (X,C_N,f(q))
\]
exhibits the \emph{cyclic sieving phenomenon} \cite{RSW} if, for every integer $d$,
\[
        |X^{c^d}|=|\{x\in X:c^d\cdot x=x\}|=f(\zeta_N^d),
\]
where $\zeta_N$ is a primitive $N$th root of unity. This phenomenon generalizes Stembridge's $q=-1$ phenomenon \cite{Stem1,Stem2,Stem3} and has received a lot of attention; see \cite{ALPU,FuTangHanZeng,OhRhoades,RSW2,Sagan,ZengZhang} for some examples. In Section~\ref{sec:cyclic-sieving}, we establish several cyclic sieving results for lattice-point sets associated with $\calP_G$ under cyclic subgroups of the natural $\Aut(G)$-action. In particular, we obtain the following result; see Sections 2 and 5 for notation and definitions.

\begin{theorem}\label{thm:semiregular-CSP}
Let $G$ be a finite abelian group of order $n$, let $m\in \N$ with $(n,m)=1$, and let $c\in \Aut(G)$ have order $d$. Assume that $c$ is semiregular on $G\setminus\{0\}$. Then the triple
\[
        \bigl(m\calP_G\cap \Z^n,\langle \widetilde c\rangle,\Cat_{n,m}(q)\bigr)
\]
exhibits the cyclic sieving phenomenon.
\end{theorem}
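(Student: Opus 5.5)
We have to show that for every integer $j$, the number of zero-sum sequences of length $m$ over $G$ fixed by $c^j$ equals $\Cat_{n,m}(\zeta_d^j)$. Here $\Cat_{n,m}(q)=\frac{1}{[n+m]_q}\qbinom{n+m}{n}$ is the $q$-rational Catalan number. The action of $\widetilde c$ on $m\calP_G\cap\Z^n$ is transported through $T$ from the permutation action of $c$ on multiplicity vectors $(a_g)$. So a lattice point fixed by $\widetilde c^{\,j}$ corresponds to a sequence $S\in\M(G,m)$ whose multiplicity function is constant on the orbits of $\langle c^j\rangle$ on $G$.

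\textbf{Step 1: count the fixed points.} Write $e=d/\gcd(d,j)$ for the order of $c^j$. By semiregularity, $c^j$ fixes $0$ and permutes $G\setminus\{0\}$ in $(n-1)/e$ cycles, each of length exactly $e$. A fixed sequence is therefore determined by two pieces of data: a multiplicity $a_0\ge0$ for $0$, and a multiplicity $b_O$ for each nonzero orbit $O$. Its length is $a_0+e\sum_O b_O=m$.

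The sum of the elements of an orbit $O=\{g,cg,\dots,c^{e-1}g\}$ is fixed by $c^j$. I expect it to be $0$ when $e>1$: we have $(1+c^j+\dots+c^{j(e-1)})g=:x$ with $c^jx=x$, and semiregularity forces $x=0$ unless $x=0$ already. Granting this, the zero-sum condition is automatic. The count becomes the number of $(a_0,(b_O))$ with $a_0+e\sum b_O=m$, namely
\[
\sum_{k\ge0}\binom{k+(n-1)/e-1}{k},\qquad ek\le m,
\]
which equals $\binom{\lfloor m/e\rfloor+(n-1)/e}{(n-1)/e}$.

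Two cases remain.
\begin{itemize}
\item When $e=1$, the count is $|\M(G,m)|=\Cat_{n,m}$ by \eqref{eq:coprime-reciprocity}.
\item When $e>1$, note that $e\mid n-1$, so $e\nmid n$. Since $(n,m)=1$, the value of $\Cat_{n,m}(\zeta_e)$ is known from the rational $q$-Catalan evaluation of Reiner–Stanton–White type.
\end{itemize}

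\textbf{Step 2: evaluate $\Cat_{n,m}(q)$ at $\zeta_e$.} I would use the $q$-Lucas style principle. Here $\Cat_{n,m}(q)=\frac{[n+m-1]!_q}{[n]!_q[m]!_q}$, and at a primitive $e$-th root of unity one pairs the factors $[k]_q$ with $e\mid k$. Since $n\equiv1\pmod e$, write $n-1=ea$ and $m=eb+r$ with $0\le r<e$. The limit is $\binom{a+b}{a}$ when $r$ is such that the count of multiples of $e$ in the numerator matches the denominator. In general the answer is $\binom{\lfloor (n+m-1)/e\rfloor}{\lfloor n/e\rfloor}$ when the multiplicity counts of zeros agree, and $0$ otherwise. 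Here $\lfloor (n+m-1)/e\rfloor=a+b+\lfloor r/e\rfloor=a+b$ and $\lfloor n/e\rfloor=a$, while the numerator and denominator each contain $a+b$ factors divisible by $e$. So the value is $\binom{a+b}{a}$, which matches Step 1. Because $\Cat_{n,m}(q)$ is a polynomial with nonnegative coefficients (a known fact for coprime $n,m$), the CSP follows.

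\textbf{Main obstacle.} I expect the orbit-sum vanishing in Step 1 to be the delicate point. One must argue that $x=\sum_{i<e}c^{ji}g$ is fixed by $c^j$, so $x=0$ by semiregularity. It is also necessary to check that the identification of $\widetilde c$-fixed lattice points with $c$-invariant multiplicity vectors is compatible with the choice of $T$. Both should be routine given the definitions of Section~5.
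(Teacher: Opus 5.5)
Your proof is correct and follows the paper's argument. The fixed-point count is Proposition~\ref{prop:fixed-point-free} applied to $c^j$ (orbit sums vanish because $\Fix_G(c^j)=\{0\}$), and the root-of-unity evaluation is Lemma~\ref{lem:q-lucas-catalan}, which the paper proves by writing $\Cat_{n,m}(q)=\frac{1}{[n]_q}\qbinom{n+m-1}{n-1}$, using $[n]_\zeta=1$, and applying $q$-Lucas.

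One caution on Step~2: the ``general principle'' you state is false as written. For example, $\Cat_{3,2}(q)=1+q^2$ is not $1$ at a primitive fifth root of unity, because the factors $[k]_\zeta$ with $e\nmid k$ need not cancel in general. In your situation they contribute $\frac{[r]!_\zeta}{[1]_\zeta\,[r]!_\zeta}=1$, precisely because $n\equiv 1\pmod e$ and hence $n+m-1\equiv m\pmod e$. So your value $\binom{a+b}{a}$ stands.
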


The paper is organized as follows. Section~\ref{sec:zero-sum-polytope} introduces zero-sum polytopes and support-refined generating functions. Section~\ref{sec:reciprocity} proves the reciprocity theorems. Section~\ref{sec:rigidity} proves Theorems~\ref{thm:main-rigidity} and \ref{thm:main-reciprocal-rigidity}. Section~\ref{sec:cyclic-sieving} proves the cyclic sieving results. Section~\ref{sec:conclusion} gives concluding remarks, and Appendix~\ref{app:comparison} proves the comparison lemma used in the rigidity arguments.

\section{Zero-sum polytopes and Ehrhart theory}\label{sec:zero-sum-polytope}

Denote by $\N$ the set of positive integers and put $\Nzero:=\N\cup\{0\}$. Let $G$ be a finite abelian group of order $n$. We identify a sequence $S$ over $G$ with its multiplicity vector $(a_g)_{g\in G}\in \Nzero^G$. Under this identification,
\[
        |S|=\sum_{g\in G}a_g,\qquad
        \sigma(S)=\sum_{g\in G}a_g g,
        \qquad
        \supp(S)=\{g\in G:a_g>0\}.
\]
The simplex
\[
        \Delta_G=\left\{x=(x_g)_{g\in G}\in \R^G_{\ge 0}:\sum_{g\in G}x_g=1\right\}
\]
has dimension $n-1$. For a nonempty subset $A\subseteq G$, set
\[
        \Delta_A:=\{x\in \Delta_G:\supp(x)\subseteq A\},
        \qquad
        \Delta_A^\circ:=\{x\in \Delta_G:\supp(x)=A\}.
\]
Then $\Delta_A$ is a face of $\Delta_G$ of dimension $|A|-1$, and $\Delta_A^\circ$ is its relative interior.

Define
\[
        \Lambda_G:=\left\{(a_g)_{g\in G}\in \Z^G:\sum_{g\in G}a_g g=0\right\}.
\]
The homomorphism
\[
        \Z^G\longrightarrow G,
        \qquad
        (a_g)_{g\in G}\longmapsto \sum_{g\in G}a_g g
\]
is surjective, so $\Lambda_G$ is a full-rank sublattice of $\Z^G$ of index $|G|=n$.

\begin{proposition}\label{prop:multiplicity-bijection}
For every $m\in \N$, the multiplicity-vector map induces bijections
\[
        \M(G,m)\longleftrightarrow m\Delta_G\cap \Lambda_G,
\]
and, for every nonempty $A\subseteq G$,
\[
        \{S\in \M(G,m):\supp(S)=A\}\longleftrightarrow m\Delta_A^\circ\cap \Lambda_G.
\]
Consequently,
\[
        |\M(G,m)|=|m\Delta_G\cap \Lambda_G|,
\]
and
\[
        \sum_{S\in \M(G,m)}q^{|\supp(S)|}
        =\sum_{\emptyset\ne A\subseteq G} q^{|A|}|m\Delta_A^\circ\cap \Lambda_G|.
\]
\end{proposition}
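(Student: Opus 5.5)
The argument is a direct unwinding of definitions. Define $\mu:\M(G,m)\to \Z^G$ by sending a sequence $S$ to its multiplicity vector $(a_g)_{g\in G}$, where $a_g$ is the number of times $g$ occurs in $S$.

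\emph{The image of $\mu$.} Since $S$ is an unordered sequence with repetitions allowed, it is determined by its multiplicities, so $\mu$ is injective. Every coordinate satisfies $a_g\in\Nzero$, and $\sum_g a_g=|S|=m$. Hence $\mu(S)\in m\Delta_G$, because $m\Delta_G=\{x\in\R^G_{\ge0}:\sum_g x_g=m\}$. The condition $\sigma(S)=\sum_g a_g g=0_G$ says exactly that $\mu(S)\in\Lambda_G$. So $\mu$ maps $\M(G,m)$ into $m\Delta_G\cap\Lambda_G$.

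\emph{Surjectivity.} Take any $x\in m\Delta_G\cap\Lambda_G$. Its coordinates are integers, since $\Lambda_G\subseteq\Z^G$, and nonnegative, with sum $m$. It is therefore the multiplicity vector of a sequence of length $m$, namely $\prod_g g^{[x_g]}$. Because $x\in\Lambda_G$, this sequence is zero-sum. This proves the first bijection.

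\emph{Support stratification.} The map $\mu$ preserves supports: $\supp(S)=\{g:a_g>0\}=\supp(\mu(S))$. Scaling by $m>0$ does not change supports, so $m\Delta_A^\circ=\{x\in m\Delta_G:\supp(x)=A\}$. Restricting $\mu$ to sequences with $\supp(S)=A$ then gives the second bijection.

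\emph{Consequences.} The cardinality identity follows from the first bijection. For the generating function, partition $\M(G,m)$ according to $\supp(S)$. Every $S$ has length $m\ge1$, so its support is a nonempty subset $A\subseteq G$. Grouping the sum $\sum_S q^{|\supp(S)|}$ by $A$ and applying the second bijection to each group gives the stated formula.

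No step is a real obstacle. The one point that needs care is integrality: $\Lambda_G\subseteq\Z^G$ guarantees that lattice points of $m\Delta_G$ have integer coordinates. Nonnegativity and the sum condition come from $m\Delta_G$ itself.
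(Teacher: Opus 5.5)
Your proposal is correct and takes the same approach as the paper: a nonnegative integer vector lies in $m\Delta_G\cap\Lambda_G$ exactly when it is the multiplicity vector of a zero-sum sequence of length $m$, and the multiplicity map preserves supports. You spell out injectivity, surjectivity, and the integrality coming from $\Lambda_G\subseteq\Z^G$ more explicitly than the paper's short proof does, but the argument is the same.
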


\begin{proof}
A vector $(a_g)_{g\in G}\in \Nzero^G$ lies in $m\Delta_G$ if and only if $\sum_g a_g=m$. It lies in $\Lambda_G$ if and only if $\sum_g a_g g=0$. Hence $m\Delta_G\cap \Lambda_G$ is precisely the set of multiplicity vectors of zero-sum sequences of length $m$. The support-refined statement follows from the definition of $\Delta_A^\circ$.
\end{proof}

Choose a $\Z$-basis of $\Lambda_G$, and let
\[
        T:\R^G\longrightarrow \R^n
\]
be the linear isomorphism sending this basis to the standard basis of $\Z^n$. Thus $T(\Lambda_G)=\Z^n$. Define the \emph{zero-sum polytope} of $G$ by
\[
        \calP_G:=T(\Delta_G)\subseteq \R^n.
\]
It is a rational simplex of dimension $n-1$.

\begin{lemma}\label{lem:faces-under-linear}
Let $P\subseteq \R^d$ be a polytope and let $L:\R^d\to \R^d$ be a linear isomorphism. Then $L$ induces a dimension-preserving bijection between the faces of $P$ and the faces of $L(P)$, and for every face $F$ of $P$ one has
\[
        L(F^\circ)=(L(F))^\circ.
\]
\end{lemma}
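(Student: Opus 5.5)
The plan is to use the definition of a face via supporting hyperplanes and then transport everything along $L$ by duality. Recall that a face of $P$ is a set of the form $F=P\cap\{x:\langle a,x\rangle=b\}$, where $\langle a,y\rangle\le b$ holds for all $y\in P$; this includes $\emptyset$ and $P$ itself, the latter via $a=0$, $b=0$. Given such $(a,b)$, set $a':=(L^{-1})^{T}a$. For $y=L(x)$ we then have $\langle a',y\rangle=\langle a,x\rangle$. Hence $\langle a',\cdot\rangle\le b$ on $L(P)$, and
\[
L(F)=L(P)\cap\{y:\langle a',y\rangle=b\},
\]
so $L(F)$ is a face of $L(P)$. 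Applying the same argument to $L^{-1}$ shows that every face of $L(P)$ has the form $L(F)$. Since $L$ is injective, $F\mapsto L(F)$ is a bijection of face sets, and it preserves inclusions.

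Dimension preservation comes from affine hulls. A linear isomorphism maps affine subspaces to affine subspaces of the same dimension, and it commutes with taking affine hulls: $L(\operatorname{aff}F)=\operatorname{aff}L(F)$, because $L$ preserves affine combinations. Therefore $\dim L(F)=\dim F$.

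For the relative interiors, recall that $F^\circ$ is the interior of $F$ inside $\operatorname{aff}F$ with the subspace topology. The map $L$ is a homeomorphism of $\R^d$, as both $L$ and $L^{-1}$ are linear and hence continuous. So its restriction $\operatorname{aff}F\to\operatorname{aff}L(F)=L(\operatorname{aff}F)$ is also a homeomorphism. Homeomorphisms carry the interior of a subset to the interior of its image, which gives $L(F^\circ)=(L(F))^\circ$.

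None of these steps is a real obstacle. The only point needing care is the convention that the empty face and $P$ itself count as faces, together with the fact that $\operatorname{aff}$ commutes with $L$; both are immediate from linearity. In the application, $L=T$ and $P=\Delta_G$, which is how the lemma will be used to identify the faces $T(\Delta_A)$ of $\calP_G$.
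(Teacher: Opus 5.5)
Your proposal is correct and follows essentially the same route as the paper: you transport a supporting hyperplane through $L$ (your $a'=(L^{-1})^{T}a$ is the paper's functional $f\circ L^{-1}$), get the bijection by applying the same argument to $L^{-1}$, and deduce $L(F^\circ)=(L(F))^\circ$ from $L$ being a homeomorphism between affine spans. Your explicit affine-hull argument for dimension preservation fills in a step the paper leaves implicit.
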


\begin{proof}
If $F=P\cap H$ for a supporting hyperplane $H=\{x:f(x)=c\}$ of $P$, then
\[
        L(F)=L(P)\cap \{y:f(L^{-1}y)=c\},
\]
so $L(F)$ is a face of $L(P)$. Applying the same argument to $L^{-1}$ gives a bijection of face lattices. Since $L$ is a homeomorphism between affine spans, it carries relative interiors to relative interiors.
\end{proof}

\begin{proposition}\label{prop:polytope-interpretation}
Let $G$ be a finite abelian group of order $n$. For every $m\in \N$,
\[
        |\M(G,m)|=|m\calP_G\cap \Z^n|.
\]
Moreover, if $A\subseteq G$ is nonempty and $F_A:=T(\Delta_A)$, then $F_A$ is a face of $\calP_G$ of dimension $|A|-1$, and
\[
        |mF_A^\circ\cap \Z^n|
        =|m\Delta_A^\circ\cap \Lambda_G|
        =|\{S\in \M(G,m):\supp(S)=A\}|.
\]
In particular,
\[
        |m\calP_G^\circ\cap \Z^n|
        =|\{S\in \M(G,m):\supp(S)=G\}|.
\]
\end{proposition}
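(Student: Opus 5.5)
The plan is to transport everything through the linear isomorphism $T$ and then invoke Proposition~\ref{prop:multiplicity-bijection}. The key observation is that $T$ is a bijection $\R^G\to\R^n$ which commutes with dilation by $m$, since it is linear. It also satisfies $T(\Lambda_G)=\Z^n$, and because $T$ is injective, $T^{-1}(\Z^n)=\Lambda_G$. Hence for any subset $X\subseteq\R^G$, $T$ restricts to a bijection
\[
X\cap\Lambda_G \longrightarrow T(X)\cap\Z^n .
\]

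First, I apply this with $X=m\Delta_G$. This gives $T(m\Delta_G)=mT(\Delta_G)=m\calP_G$, so $|m\calP_G\cap\Z^n|=|m\Delta_G\cap\Lambda_G|$. This equals $|\M(G,m)|$ by Proposition~\ref{prop:multiplicity-bijection}.

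Second, for a nonempty $A\subseteq G$, I note that $\Delta_A$ is a face of the simplex $\Delta_G$ of dimension $|A|-1$, with relative interior $\Delta_A^\circ$. Lemma~\ref{lem:faces-under-linear} is stated for isomorphisms $\R^d\to\R^d$; here the domain is $\R^G$ and the target is $\R^n$ with $|G|=n$, so after fixing an identification $\R^G\cong\R^n$ the lemma applies verbatim. It gives that $F_A=T(\Delta_A)$ is a face of $\calP_G$ of the same dimension $|A|-1$, and that $T(\Delta_A^\circ)=F_A^\circ$. Dilating by $m$ commutes with both $T$ and taking relative interiors, since scaling is a homeomorphism of affine spans. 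Therefore $T(m\Delta_A^\circ)=mF_A^\circ$, and the bijection above with $X=m\Delta_A^\circ$ yields $|mF_A^\circ\cap\Z^n|=|m\Delta_A^\circ\cap\Lambda_G|$. The second bijection of Proposition~\ref{prop:multiplicity-bijection} then identifies this with the number of $S\in\M(G,m)$ with $\supp(S)=A$.

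Finally, I take $A=G$. Then $F_G=\calP_G$, and $F_G^\circ$ is the relative interior of $\calP_G$, which is what $\calP_G^\circ$ denotes since $\calP_G$ is not full-dimensional in $\R^n$. This gives the last display.

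I expect no genuine obstacle here. The only point needing care is the convention that $\calP_G^\circ$ means the relative interior. A second small point is that $m\Delta_A^\circ$ equals the set of points of $m\Delta_G$ with support exactly $A$, which is immediate from the definitions.
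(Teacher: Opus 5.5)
Your proposal is correct and is essentially the paper's proof. The paper also derives the statement from Proposition~\ref{prop:multiplicity-bijection}, the identity $T(\Lambda_G)=\Z^n$, and Lemma~\ref{lem:faces-under-linear}; your remarks on identifying $\R^G$ with $\R^n$ and on reading $\calP_G^\circ$ as the relative interior simply spell out details the paper leaves implicit.
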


\begin{proof}
This follows from Proposition~\ref{prop:multiplicity-bijection}, the identity $T(\Lambda_G)=\Z^n$, and Lemma~\ref{lem:faces-under-linear}.
\end{proof}

Define the Ehrhart quasipolynomial
\[
        L_G(m):=|m\calP_G\cap \Z^n|=|\M(G,m)|.
\]

\begin{theorem}\label{thm:ehrhart-series}
Let $G$ be a finite abelian group of order $n$, and suppose
\[
        G\cong C_{n_1}\oplus\cdots\oplus C_{n_r},
        \qquad 1<n_1\mid \cdots\mid n_r.
\]
Then the Ehrhart series of $\calP_G$ is
\begin{equation}\label{eq:ehrhart-series}
        \Ehr_{\calP_G}(t):=\sum_{m\ge 0}L_G(m)t^m
        =\frac1n\sum_{d\mid n}\frac{\varphi_G(d)}{(1-t^d)^{n/d}}.
\end{equation}
Consequently, for every positive integer $m$,
\begin{equation}\label{eq:LG-formula}
        L_G(m)=|\M(G,m)|
        =\frac1n\sum_{d\mid (m,n)}\varphi_G(d)
        \binom{m/d+n/d-1}{n/d-1},
\end{equation}
where
\begin{equation}\label{eq:phi-formula}
        \varphi_G(d)=\sum_{\ell\mid d}\mu\!\left(\frac d\ell\right)
        \prod_{i=1}^r (n_i,\ell).
\end{equation}
\end{theorem}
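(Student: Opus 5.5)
The plan is to count zero-sum sequences with characters of $G$, sum the resulting generating function over all lengths, and then regroup by element order to reach \eqref{eq:ehrhart-series}. By Proposition~\ref{prop:polytope-interpretation}, $L_G(m)=|\M(G,m)|$, so it suffices to compute $\sum_{m\ge0}|\M(G,m)|t^m$. I would use orthogonality of characters on the dual group $\widehat G$, which has $|\widehat G|=n$. For a multiplicity vector $(a_g)$ one has
\[
\frac1n\sum_{\chi\in\widehat G}\chi\Bigl(\sum_g a_g g\Bigr)=\mathbf 1\Bigl[\textstyle\sum_g a_g g=0\Bigr].
\]
Summing $t^{\sum a_g}$ over all $(a_g)\in\Nzero^G$ then factors as a product of geometric series:
\[
\Ehr_{\calP_G}(t)=\frac1n\sum_{\chi\in\widehat G}\prod_{g\in G}\frac{1}{1-\chi(g)t}.
\]

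Next I would evaluate the inner product for a fixed $\chi$. Suppose $\chi$ has order $d$. Then $\chi:G\to\mu_d$ is surjective with fibres of size $n/d$. Using $\prod_{\zeta\in\mu_d}(1-\zeta t)=1-t^d$, the product becomes $(1-t^d)^{-n/d}$. Since $\widehat G\cong G$ for finite abelian groups, the number of characters of order $d$ equals $\varphi_G(d)$. Grouping the characters by order gives \eqref{eq:ehrhart-series}.

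For \eqref{eq:LG-formula}, I would expand each term with the negative binomial series
\[
(1-t^d)^{-n/d}=\sum_{k\ge0}\binom{k+n/d-1}{n/d-1}t^{dk}.
\]
Extracting the coefficient of $t^m$ keeps only the divisors $d\mid m$ with $k=m/d$. Combined with $d\mid n$, this gives the sum over $d\mid(m,n)$.

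For \eqref{eq:phi-formula}, I would count elements killed by $\ell$. The number of $x\in G$ with $\ell x=0$ is $\prod_i|C_{n_i}[\ell]|=\prod_i(n_i,\ell)$. This quantity equals $\sum_{e\mid\ell}\varphi_G(e)$, so Möbius inversion yields the stated formula.

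The only mildly delicate step is the fibre computation together with the identification of the number of characters of order $d$ with $\varphi_G(d)$. I would handle the latter via a non-canonical isomorphism $\widehat G\cong G$, which preserves element orders. Everything else is a routine generating-function manipulation.
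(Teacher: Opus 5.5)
Your proof is correct and complete. The paper gives no argument for this theorem: it cites the literature and only remarks that \eqref{eq:LG-formula} agrees with \eqref{eq:MGFormula-intro} via a binomial identity. Your character-averaging computation is the same argument the paper uses for Proposition~\ref{prop:FG}, and your Ehrhart series is the $q=1$ specialization of that formula. You also supply the M\"obius-inversion derivation of \eqref{eq:phi-formula} from the count $\prod_i (n_i,\ell)$ of elements killed by $\ell$, which the paper leaves to the references.
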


\begin{proof}
These formulas are standard; see \cite{HanZhang2021,MuratovicWang}. Formula \eqref{eq:LG-formula} is equivalent to \eqref{eq:MGFormula-intro} by the identity
\[
        \frac1n\binom{m/d+n/d-1}{n/d-1}
        =\frac1{m+n}\binom{m/d+n/d}{n/d}.
\]
\end{proof}

\begin{remark}\label{rem:period-divides-exponent}
Formula \eqref{eq:LG-formula} immediately implies that the quasiperiod of $L_G(m)$ divides the exponent of $G$. Indeed, only divisors $d$ that occur as orders of elements of $G$ can contribute, and each such $d$ divides $\exp(G)$. Moreover, the high-degree part of $L_G(m)$ is independent of the group structure: the term $d=1$ has degree $n-1$, while each term with $d>1$ has degree at most $n/2-1$. Thus the arithmetic of $G$ is encoded in the lower-degree periodic part of the quasipolynomial. We refer to \cite{McAllisterWoods} for a related study.
\end{remark}

\subsection{Support-refined generating functions}

Define the support-refined zero-sum generating function
\[
        \calF_G(q,t):=\sum_{m\ge 0}\sum_{S\in \M(G,m)}q^{|\supp(S)|}t^m.
\]

\begin{proposition}\label{prop:FG}
Let $G$ be a finite abelian group of order $n$. Then
\begin{equation}\label{eq:FG}
        \calF_G(q,t)
        =\frac1n\sum_{d\mid n}\varphi_G(d)
        \left(\frac{1-(-1)^d(q-1)^d t^d}{1-t^d}\right)^{n/d}.
\end{equation}
\end{proposition}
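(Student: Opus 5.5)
The plan is to use character orthogonality on $G$, in the same way the formula \eqref{eq:ehrhart-series} is usually derived, but now with each element of $G$ weighted by a support marker.

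First I write the zero-sum indicator as
\[
\mathbf 1[\sigma(S)=0]=\frac1n\sum_{\chi\in\widehat G}\chi(\sigma(S)),
\]
where $\widehat G$ is the dual group. This gives
\[
\calF_G(q,t)=\frac1n\sum_{\chi\in\widehat G}\ \prod_{g\in G}\Bigl(1+q\sum_{a\ge1}\chi(g)^a t^a\Bigr).
\]
For each $g$ the factor simplifies:
\[
1+q\frac{\chi(g)t}{1-\chi(g)t}=\frac{1+(q-1)\chi(g)t}{1-\chi(g)t}.
\]

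Second, I fix $\chi$ and let $d$ be its order. Then $\chi(G)$ is the group $\mu_d$ of $d$th roots of unity, and each value is taken exactly $n/d$ times. The product over $G$ therefore becomes
\[
\Bigl(\prod_{\zeta\in\mu_d}\frac{1+(q-1)\zeta t}{1-\zeta t}\Bigr)^{n/d}.
\]
The standard identity $\prod_{\zeta\in\mu_d}(1-\zeta x)=1-x^d$ gives the denominator $1-t^d$. For the numerator I substitute $x=-(q-1)t$, which yields $1-(-1)^d(q-1)^dt^d$.

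Third, I count characters. The number of characters of order $d$ equals $\varphi_{\widehat G}(d)$, and this equals $\varphi_G(d)$ because $\widehat G\cong G$. Summing over $d\mid n$ then gives \eqref{eq:FG}.

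The main point needing care is the first step. I must check that the generating function over multiplicity vectors factors correctly: the empty exponent contributes $1$, and each positive exponent contributes $q$ exactly once per element of the support. All series are formal power series in $t$, so interchanging the finite character sum with the product is harmless. As a sanity check, setting $q=1$ recovers \eqref{eq:ehrhart-series}.
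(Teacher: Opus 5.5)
Your proposal is correct and follows essentially the same route as the paper's proof. Both impose the zero-sum condition by averaging over characters, factor the sum over multiplicity vectors element by element into $\frac{1+(q-1)\chi(h)t}{1-\chi(h)t}$, group characters by their order $d$ using $\prod_{\zeta^d=1}(1-x\zeta)=1-x^d$, and count characters of order $d$ as $\varphi_G(d)$ via $\widehat G\cong G$.
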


\begin{proof}
Write $\widehat G$ for the character group of $G$. For any $x\in G$, character orthogonality gives
\[
        \frac1n\sum_{\chi\in \widehat G}\chi(x)
        =\begin{cases}1,&x=0,\\ 0,&x\ne 0.\end{cases}
\]
Thus the zero-sum condition may be imposed by averaging over characters. If a sequence is represented by a multiplicity vector $(a_h)_{h\in G}$, then
\[
        \mathbf 1_{\{\sum_h a_hh=0\}}
        =\frac1n\sum_{\chi\in \widehat G}\chi\!\left(\sum_{h\in G}a_hh\right).
\]
Therefore
\begin{align*}
\calF_G(q,t)
&=\sum_{(a_h)\in \Nzero^G}
  q^{|\{h:a_h>0\}|}t^{\sum_h a_h}
  \mathbf 1_{\{\sum_h a_hh=0\}}                                      \\
&=\frac1n\sum_{\chi\in \widehat G}
  \sum_{(a_h)\in \Nzero^G}
  q^{|\{h:a_h>0\}|}t^{\sum_h a_h}
  \chi\!\left(\sum_{h\in G}a_hh\right).
\end{align*}
Since $\chi$ is a homomorphism,
\[
        \chi\!\left(\sum_{h\in G}a_hh\right)=\prod_{h\in G}\chi(h)^{a_h}.
\]
The inner sum factors as
\begin{align*}
\calF_G(q,t)
&=\frac1n\sum_{\chi\in \widehat G}
  \prod_{h\in G}\left(\sum_{a\ge 0}q^{\mathbf 1_{\{a>0\}}}(t\chi(h))^a\right) \\
&=\frac1n\sum_{\chi\in \widehat G}
  \prod_{h\in G}\frac{1+(q-1)t\chi(h)}{1-t\chi(h)}.
\end{align*}
Suppose that $\chi$ has order $d$. Then its image is the group $\mu_d$ of $d$th roots of unity, and each value in $\mu_d$ is assumed exactly $n/d$ times on $G$. Hence
\[
        \prod_{h\in G}\frac{1+(q-1)t\chi(h)}{1-t\chi(h)}
        =\left(\prod_{\zeta^d=1}\frac{1+(q-1)t\zeta}{1-t\zeta}\right)^{n/d}.
\]
Using $\prod_{\zeta^d=1}(1-x\zeta)=1-x^d$, we obtain
\[
        \prod_{\zeta^d=1}(1-t\zeta)=1-t^d,
        \qquad
        \prod_{\zeta^d=1}(1+(q-1)t\zeta)=1-(-1)^d(q-1)^d t^d.
\]
Therefore the product depends only on $d$, and grouping the characters according to their orders gives \eqref{eq:FG}, because $G\cong \widehat G$ and hence the number of characters of order $d$ is $\varphi_G(d)$.
\end{proof}

\subsection{The equivariant zero-sum polytope}

We now consider zero-sum polytopes in the equivariant setting. For $a\in \Aut(G)$, let
\begin{itemize}[leftmargin=2em]
\item $\Fix_G(a)$ be the set of fixed points of $a$;
\item $\calO(a)$ be the set of orbits of the action of $a$ on $G$;
\item $r(a):=|\calO(a)|$;
\item $\ell(O):=|O|$ and $\sigma(O):=\sum_{g\in O}g\in G$ for $O\in \calO(a)$.
\end{itemize}
For $m\in \Nzero$, define
\[
        \M_a(G,m):=\{S\in \M(G,m):a\cdot S=S\},
\]
the set of zero-sum sequences of length $m$ over $G$ that are invariant under $a$.

The automorphism $a$ acts linearly on $\R^G$ by permuting coordinates:
\[
        (a\cdot x)_g:=x_{a^{-1}g}.
\]
This action preserves both $\Delta_G$ and $\Lambda_G$. Transporting the action through $T$ gives the linear automorphism
\[
        \widetilde a:=TaT^{-1}
\]
of $\R^n$, which preserves both $\Z^n$ and $\calP_G$.

\begin{proposition}\label{prop:equi-zero-sum}
Let $a\in \Aut(G)$ and let $\widetilde a=TaT^{-1}$. Then, for every $m\ge 0$,
\[
        |\M_a(G,m)|=|(m\calP_G)^{\widetilde a}\cap \Z^n|.
\]
\end{proposition}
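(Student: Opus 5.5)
The plan is to reduce the statement to Proposition~\ref{prop:multiplicity-bijection} by showing that $T$ intertwines the two actions and restricts to a bijection between the relevant fixed-point sets. Here $(m\calP_G)^{\widetilde a}\cap\Z^n$ means the set of lattice points of $m\calP_G$ fixed by $\widetilde a$.

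\emph{Step 1: the action preserves $\Delta_G$ and $\Lambda_G$.} The coordinate permutation $x\mapsto a\cdot x$ preserves nonnegativity and the coordinate sum, so it preserves $m\Delta_G$. For $\Lambda_G$, take $x\in\Lambda_G$ and reindex with $h=a^{-1}g$:
\[
\sum_g (a\cdot x)_g\, g=\sum_g x_{a^{-1}g}\,g=\sum_h x_h\, a(h)=a\Bigl(\sum_h x_h h\Bigr)=a(0)=0 .
\]
Here we used that $a$ is a group homomorphism. Hence $a$ restricts to a permutation of the finite set $m\Delta_G\cap\Lambda_G$. Since $T$ is linear, $T(m\Delta_G)=m\calP_G$ and $T(\Lambda_G)=\Z^n$. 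It follows that $\widetilde a=TaT^{-1}$ preserves both $m\calP_G$ and $\Z^n$.

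\emph{Step 2: transport fixed points through $T$.} Because $T$ is a bijection, we have $T(m\Delta_G\cap\Lambda_G)=m\calP_G\cap\Z^n$. For $y=T(x)$, the condition $\widetilde a\,y=y$ is equivalent to $T(a\cdot x)=T(x)$, which is equivalent to $a\cdot x=x$. So $T$ restricts to a bijection
\[
(m\Delta_G\cap\Lambda_G)^a\longleftrightarrow (m\calP_G\cap\Z^n)^{\widetilde a}.
\]

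\emph{Step 3: identify the fixed points with invariant sequences.} By Proposition~\ref{prop:multiplicity-bijection}, the points of $m\Delta_G\cap\Lambda_G$ are exactly the multiplicity vectors $(a_g)$ of sequences $S\in\M(G,m)$. The sequence $a\cdot S$ is obtained by applying $a$ to each term of $S$. Its multiplicity at $g$ is the multiplicity of $a^{-1}g$ in $S$, so the multiplicity vector of $a\cdot S$ is precisely $a\cdot(a_g)$. Since a sequence is determined by its multiplicity vector, $a\cdot S=S$ holds if and only if the vector is fixed. This gives the bijection $\M_a(G,m)\leftrightarrow (m\Delta_G\cap\Lambda_G)^a$.

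Composing the bijections of Steps~2 and~3 proves the claimed equality of cardinalities. The case $m=0$ is trivial, since both sides have exactly one element: the empty sequence and the origin. None of these steps is difficult. The only point requiring care is the reindexing convention $(a\cdot x)_g=x_{a^{-1}g}$, which must match the action on sequences so that the computation in Step~1 is consistent.
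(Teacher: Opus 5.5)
Your proposal is correct and follows the paper's proof: identify $\M_a(G,m)$ with the $a$-fixed multiplicity vectors in $m\Delta_G\cap\Lambda_G$, then transport this set through $T$ to the $\widetilde a$-fixed points of $m\calP_G\cap\Z^n$. Unlike the paper, you also verify explicitly that the coordinate action preserves $\Lambda_G$ and matches the action on sequences; the paper only asserts these facts.
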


\begin{proof}
A zero-sum sequence $S\in \M(G,m)$ corresponds to a multiplicity vector $b=(b_g)_{g\in G}\in m\Delta_G\cap \Lambda_G$. The condition $a\cdot S=S$ is exactly the condition $a\cdot b=b$. Hence
\[
        \M_a(G,m)\longleftrightarrow (m\Delta_G)^a\cap \Lambda_G.
\]
Applying $T$ gives a bijection with $(m\calP_G)^{\widetilde a}\cap \Z^n$.
\end{proof}

For later use, we describe this fixed-point set in orbit coordinates. Define
\[
        \Delta_G^a:=\left\{(x_O)_{O\in \calO(a)}\in \R_{\ge 0}^{\calO(a)}:
        \sum_{O\in \calO(a)}\ell(O)x_O=1\right\},
\]
and
\[
        \Lambda_G^a:=\left\{(c_O)_{O\in \calO(a)}\in \Z^{\calO(a)}:
        \sum_{O\in \calO(a)}c_O\sigma(O)=0\right\}.
\]
An $a$-invariant multiplicity vector is constant on each $a$-orbit. If $c_O$ is the common multiplicity on the orbit $O$, then the length and zero-sum conditions become
\[
        \sum_{O\in \calO(a)}\ell(O)c_O=m,
        \qquad
        \sum_{O\in \calO(a)}c_O\sigma(O)=0.
\]
Thus the multiplicity-on-orbits map gives a bijection
\begin{equation}\label{eq:MaGm-orbit}
        \M_a(G,m)\longleftrightarrow m\Delta_G^a\cap \Lambda_G^a.
\end{equation}

\begin{proposition}\label{prop:PGa}
The lattice $\Lambda_G^a$ is a full-rank sublattice of $\Z^{\calO(a)}$. Choose a linear isomorphism
\[
        T_a:\R^{\calO(a)}\longrightarrow \R^{r(a)}
\]
such that $T_a(\Lambda_G^a)=\Z^{r(a)}$. Then
\[
        \calP_{G,a}:=T_a(\Delta_G^a)
\]
is a rational simplex of dimension $r(a)-1$, and for every $m\ge 0$,
\[
        |\M_a(G,m)|=|m\calP_{G,a}\cap \Z^{r(a)}|.
\]
\end{proposition}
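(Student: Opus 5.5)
To prove Proposition~\ref{prop:PGa}, I will first show that $\Lambda_G^a$ is a full-rank sublattice of $\Z^{\calO(a)}$, and then transport everything through $T_a$, just as Proposition~\ref{prop:polytope-interpretation} was obtained from Proposition~\ref{prop:multiplicity-bijection}.

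For the full-rank claim, consider the homomorphism
\[
        \psi:\Z^{\calO(a)}\longrightarrow G,\qquad (c_O)_{O}\longmapsto \sum_{O\in\calO(a)}c_O\sigma(O).
\]
By definition $\Lambda_G^a=\ker\psi$. Since $G$ is finite, the quotient $\Z^{\calO(a)}/\ker\psi$ embeds into $G$, so it is finite, and its index is at most $n$. Concretely, $n e_O\in\ker\psi$ for every orbit $O$, so $n\Z^{\calO(a)}\subseteq\Lambda_G^a$, which already forces full rank $r(a)$. I do not need surjectivity of $\psi$; it can fail, for example when $a=-1$ and $\sigma(O)=0$ for every two-element orbit. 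Only finiteness of the index matters. Hence $\Lambda_G^a$ is free of rank $r(a)$. Choosing a $\Z$-basis and sending it to the standard basis gives the linear isomorphism $T_a$ with $T_a(\Lambda_G^a)=\Z^{r(a)}$.

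Next I will show that $\Delta_G^a$ is a simplex. It is the intersection of the nonnegative orthant in $\R^{\calO(a)}$ with the hyperplane $\sum_O \ell(O)x_O=1$, whose coefficients are all positive. So it is the convex hull of the $r(a)$ points $e_O/\ell(O)$. These points are affinely independent, because the vectors $e_O$ are linearly independent and all the points lie on an affine hyperplane that avoids the origin. Thus $\Delta_G^a$ is a simplex of dimension $r(a)-1$ with rational vertices. A linear isomorphism preserves affine independence, so $\calP_{G,a}=T_a(\Delta_G^a)$ is again a simplex of dimension $r(a)-1$. Its vertices $T_a(e_O)/\ell(O)$ are rational: $nT_a(e_O)=T_a(ne_O)$ lies in $T_a(\Lambda_G^a)=\Z^{r(a)}$, so each vertex lies in $\frac{1}{n\ell(O)}\Z^{r(a)}$.

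Finally, for the counting statement, the bijection \eqref{eq:MaGm-orbit} gives
\[
        |\M_a(G,m)|=|m\Delta_G^a\cap\Lambda_G^a|.
\]
Since $T_a$ is linear, $T_a(m\Delta_G^a)=m\calP_{G,a}$, and $T_a$ restricts to a bijection $\Lambda_G^a\to\Z^{r(a)}$. Therefore
\[
        |m\Delta_G^a\cap\Lambda_G^a|=|m\calP_{G,a}\cap\Z^{r(a)}|.
\]

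The only step needing care is the full-rank claim, specifically not assuming that $\psi$ is surjective. The containment $n\Z^{\calO(a)}\subseteq\Lambda_G^a$ settles it, and the remaining steps are routine.
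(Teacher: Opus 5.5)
Your proposal is correct and follows essentially the same route as the paper: $\Lambda_G^a$ is the kernel of $\psi$ into the finite group $G$, $\Delta_G^a$ is the simplex with vertices $e_O/\ell(O)$, and the count follows by transporting the bijection \eqref{eq:MaGm-orbit} through $T_a$. Your additional details are accurate refinements of points the paper leaves implicit: the containment $n\Z^{\calO(a)}\subseteq\Lambda_G^a$, the observation that $\psi$ need not be surjective, and the explicit check that the vertices of $\calP_{G,a}$ are rational.
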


\begin{proof}
Consider the homomorphism
\[
        \psi_a:\Z^{\calO(a)}\longrightarrow G,
        \qquad
        (c_O)_O\longmapsto \sum_{O\in \calO(a)}c_O\sigma(O).
\]
Then $\Lambda_G^a=\ker(\psi_a)$. Since $G$ is finite, $\ker(\psi_a)$ has finite index in $\Z^{\calO(a)}$, so $\Lambda_G^a$ is full-rank. The set $\Delta_G^a$ is the simplex cut out by $\sum_O \ell(O)x_O=1$ in the nonnegative orthant, with vertices $e_O/\ell(O)$ for $O\in \calO(a)$; hence it has dimension $r(a)-1$. Applying $T_a$ to the bijection \eqref{eq:MaGm-orbit} proves the lattice-point formula.
\end{proof}

We now introduce the support-refined generating function for $a$-invariant zero-sum sequences:
\[
        \calF_{G,a}(q,t):=\sum_{m\ge 0}\sum_{S\in \M_a(G,m)}q^{|\supp(S)|}t^m.
\]

\begin{proposition}\label{prop:equiv-support}
Let $G$ be a finite abelian group and let $a\in \Aut(G)$. Then
\begin{equation}\label{eq:F-G-a}
        \calF_{G,a}(q,t)
        =\frac1{|G|}\sum_{\chi\in \widehat G}
        \prod_{O\in \calO(a)}
        \frac{1+(q^{\ell(O)}-1)\chi(\sigma(O))t^{\ell(O)}}
             {1-\chi(\sigma(O))t^{\ell(O)}}.
\end{equation}
In particular, the ordinary generating function
\[
        \calE_{G,a}(t):=\calF_{G,a}(1,t)=\sum_{m\ge 0}|\M_a(G,m)|t^m
\]
is
\begin{equation}\label{eq:E-G-a}
        \calE_{G,a}(t)
        =\frac1{|G|}\sum_{\chi\in \widehat G}
        \prod_{O\in \calO(a)}\frac1{1-\chi(\sigma(O))t^{\ell(O)}}.
\end{equation}
\end{proposition}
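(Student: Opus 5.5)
We follow the proof of Proposition~\ref{prop:FG}, now working in the orbit coordinates of \eqref{eq:MaGm-orbit}. An $a$-invariant sequence $S$ has a multiplicity vector that is constant on each orbit $O\in\calO(a)$. So $S$ is determined by the vector $(c_O)_{O\in\calO(a)}\in\Nzero^{\calO(a)}$ of common orbit multiplicities. In these coordinates
\[
|S|=\sum_O \ell(O)c_O,\qquad
\sigma(S)=\sum_O c_O\sigma(O),\qquad
|\supp(S)|=\sum_{O:\,c_O>0}\ell(O).
\]
The support formula holds because a whole orbit enters the support exactly when $c_O>0$. This gives
\[
\calF_{G,a}(q,t)=\sum_{(c_O)\in\Nzero^{\calO(a)}}\ \prod_{O}q^{\ell(O)\mathbf 1_{\{c_O>0\}}}\,t^{\ell(O)c_O}\cdot \mathbf 1_{\{\sum_O c_O\sigma(O)=0\}}.
\]

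Next we impose the zero-sum condition by character orthogonality, exactly as before:
\[
\mathbf 1_{\{x=0\}}=\frac1{|G|}\sum_{\chi\in\widehat G}\chi(x),\qquad
\chi\Bigl(\sum_O c_O\sigma(O)\Bigr)=\prod_O\chi(\sigma(O))^{c_O}.
\]
We interchange the finite sum over $\chi$ with the formal power series sum. This is legitimate because for each power of $t$ only finitely many vectors $(c_O)$ contribute. The inner sum then factors over orbits. Writing $u=\chi(\sigma(O))t^{\ell(O)}$, the factor for $O$ is
\[
\sum_{c\ge0}q^{\ell(O)\mathbf 1_{\{c>0\}}}u^c=1+q^{\ell(O)}\frac{u}{1-u}=\frac{1+(q^{\ell(O)}-1)u}{1-u}.
\]
Substituting this back gives \eqref{eq:F-G-a}.

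Setting $q=1$ collapses each numerator to $1$, which gives \eqref{eq:E-G-a}. The identification $\calF_{G,a}(1,t)=\sum_m|\M_a(G,m)|t^m$ holds by definition.

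No step is a real obstacle; the only point needing care is the support count. The exponent of $q$ must be $\ell(O)$ rather than $1$, since every element of an orbit is simultaneously in or out of the support. Everything else is the geometric series computation already carried out in Proposition~\ref{prop:FG}.
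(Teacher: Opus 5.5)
Your proposal is correct and follows essentially the same route as the paper. You encode invariant sequences by orbit multiplicities $(c_O)$, impose the zero-sum condition by character orthogonality, factor over orbits, sum the geometric series with weight $q^{\ell(O)}$ on positive multiplicities, and then set $q=1$; your remark justifying the exchange of the finite character sum with the formal power series is a harmless detail that the paper leaves implicit.
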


\begin{proof}
An $a$-invariant sequence is uniquely determined by assigning a nonnegative integer $c_O$ to each orbit $O\in \calO(a)$, where $c_O$ is the common multiplicity of all elements in $O$. Such a choice has length
$        \sum_{O\in \calO(a)}\ell(O)c_O,
$
sum
$
        \sum_{O\in \calO(a)}c_O\sigma(O),
$
and support size
$
        \sum_{\substack{O\in \calO(a)\\ c_O>0}}\ell(O).
$
Therefore
\[
\calF_{G,a}(q,t)
=\sum_{(c_O)\in \Nzero^{\calO(a)}}
q^{\sum_{c_O>0}\ell(O)}t^{\sum_O \ell(O)c_O}
\mathbf 1_{\{\sum_O c_O\sigma(O)=0\}}.
\]
Similar to the proof of Proposition \ref{prop:FG}, we have
\begin{align*}
\calF_{G,a}(q,t)
&=\frac1{|G|}\sum_{\chi\in \widehat G}
\sum_{(c_O)\in \Nzero^{\calO(a)}}
q^{\sum_{c_O>0}\ell(O)}t^{\sum_O \ell(O)c_O}
\prod_{O\in \calO(a)}\chi(\sigma(O))^{c_O} \\
&=\frac1{|G|}\sum_{\chi\in \widehat G}
\prod_{O\in \calO(a)}
\left(\sum_{c\ge 0}q^{\ell(O)\mathbf 1_{\{c>0\}}}
      (\chi(\sigma(O))t^{\ell(O)})^c\right).
\end{align*}
For a fixed orbit $O$, put $y_O=\chi(\sigma(O))t^{\ell(O)}$. Then
\[
        \sum_{c\ge 0}q^{\ell(O)\mathbf 1_{\{c>0\}}}y_O^c
        =1+\sum_{c\ge 1}q^{\ell(O)}y_O^c
        =\frac{1+(q^{\ell(O)}-1)y_O}{1-y_O}.
\]
Multiplying over all orbits and summing over characters proves \eqref{eq:F-G-a}. Setting $q=1$ gives \eqref{eq:E-G-a}.
\end{proof}


\section{Combinatorial reciprocity}\label{sec:reciprocity}

In this section, we prove several results on combinatorial reciprocity. We prove Theorem~\ref{thm:main-reciprocity} in two ways: first directly from the counting formula, and then conceptually from Ehrhart--Macdonald reciprocity.

\begin{proof}[Direct proof of Theorem~\ref{thm:main-reciprocity}]
Let $G$ be a finite abelian group of order $n$. By \eqref{eq:LG-formula}, the counting quasipolynomial for zero-sum sequences is
\[
        L_G(m)=|\M(G,m)|
        =\frac1n\sum_{d\mid (n,m)}\varphi_G(d)
        \binom{m/d+n/d-1}{n/d-1}.
\]
For a positive integer $m$, its value at $-m$ is
\[
        L_G(-m)
        =\frac1n\sum_{d\mid (n,m)}\varphi_G(d)
        \binom{-m/d+n/d-1}{n/d-1}.
\]
Note that
\[
        \binom{-m/d+n/d-1}{n/d-1}=(-1)^{n/d-1}\binom{m/d-1}{n/d-1},
\]
we obtain
\begin{equation}\label{eq:LG-negative-direct}
        L_G(-m)
        =\frac1n\sum_{d\mid (n,m)}\varphi_G(d)(-1)^{n/d-1}
        \binom{m/d-1}{n/d-1}.
\end{equation}

We now count zero-sum sequences of length $m$ with full support. By character orthogonality,
\[
        |\M^\circ(G,m)|
        =\frac1n\sum_{\chi\in \widehat G}[t^m]
        \prod_{h\in G}\left(\sum_{a\ge 1}(t\chi(h))^a\right).
\]
Since
\[
        \sum_{a\ge 1}(t\chi(h))^a=\frac{t\chi(h)}{1-t\chi(h)},
\]
we get
\[
        |\M^\circ(G,m)|
        =\frac1n\sum_{\chi\in \widehat G}[t^m]
        t^n\left(\prod_{h\in G}\chi(h)\right)
        \prod_{h\in G}\frac1{1-t\chi(h)}.
\]
Suppose that $\chi$ has order $d$. Then its values are the $d$th roots of unity, each occurring $n/d$ times. Hence
\[
        \prod_{h\in G}\chi(h)
        =\left(\prod_{\zeta^d=1}\zeta\right)^{n/d}
        =\bigl((-1)^{d-1}\bigr)^{n/d}=(-1)^{n-n/d},
\]
and
\[
        \prod_{h\in G}\frac1{1-t\chi(h)}=\frac1{(1-t^d)^{n/d}}.
\]
Therefore
\[
        |\M^\circ(G,m)|
        =\frac1n\sum_{d\mid n}\varphi_G(d)(-1)^{n-n/d}
        [t^{m-n}]\frac1{(1-t^d)^{n/d}}.
\]
The coefficient is zero unless $d\mid m$; for $d\mid (n,m)$ it equals
\[
        \binom{m/d-1}{n/d-1}.
\]
Thus
\begin{equation}\label{eq:full-support-direct}
        |\M^\circ(G,m)|
        =\frac1n\sum_{d\mid (n,m)}\varphi_G(d)(-1)^{n-n/d}
        \binom{m/d-1}{n/d-1}.
\end{equation}
Comparing \eqref{eq:LG-negative-direct} and \eqref{eq:full-support-direct}, and using
\[
        (-1)^{n-1}(-1)^{n-n/d}=(-1)^{n/d-1},
\]
we obtain
\[
        L_G(-m)=(-1)^{n-1}|\M^\circ(G,m)|.
\]
Since $L_G(-m)$ is denoted by $|\M(G,-m)|$, this proves the theorem.
\end{proof}

\begin{lemma}[Ehrhart--Macdonald reciprocity, \cite{Macdonald}]\label{lem:ehrhart-macdonald}
Let $\mathcal P$ be a rational polytope of dimension $d$. Then, for every positive integer $t$,
\[
        L_{\mathcal P}(-t)=(-1)^d L_{\mathcal P^\circ}(t),
\]
where $\mathcal P^\circ$ denotes the relative interior of $\mathcal P$.
\end{lemma}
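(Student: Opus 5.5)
The lemma is the classical theorem of Macdonald (for rational polytopes it goes back to Ehrhart and Macdonald, and Stanley gave the generating-function proof). I would follow Stanley's route through cones and rational generating functions.

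\textbf{Homogenize.} Let $\mathcal P\subseteq\R^N$ have dimension $d$ and rational vertices $v_1,\dots,v_s$. Form the cone
\[
C:=\R_{\ge 0}\{(v_1,1),\dots,(v_s,1)\}\subseteq\R^{N+1}.
\]
Its lattice points at height $t$ are exactly the points of $t\mathcal P\cap\Z^N$, and those of the relative interior $C^\circ$ at height $t\ge1$ are the points of $t\mathcal P^\circ\cap\Z^N$. So it suffices to prove the cone statement
\[
\sigma_C(1/z)=(-1)^{d+1}\sigma_{C^\circ}(z),
\]
as rational functions, where $\sigma_K(z)=\sum_{p\in K\cap\Z^{N+1}}z^p$. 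After that, specialize to $z=(1,\dots,1,x)$.

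\textbf{Simplicial open cones.} Triangulate $\mathcal P$ into rational simplices using no new vertices. This writes $C$ as the disjoint union of the relatively open simplicial cones $K^\circ$ over all faces of the triangulation, including the apex $\{0\}$. For a $k$-dimensional simplicial cone $K$, rescale its generators to integer vectors $w_1,\dots,w_k$. Every lattice point of $K^\circ$ is uniquely $p+\sum n_iw_i$ with $n_i\ge0$ and $p$ in the parallelepiped $\Pi^\circ=\{\sum\lambda_iw_i:0<\lambda_i\le1\}$. Hence
\[
\sigma_{K^\circ}(z)=\frac{\sum_{p\in\Pi^\circ}z^p}{\prod_i(1-z^{w_i})}.
\]
The map $p\mapsto\sum_i w_i-p$ is a bijection from the half-open parallelepiped $\{0\le\lambda_i<1\}$ onto $\Pi^\circ$. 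A direct computation with this bijection gives $\sigma_{K^\circ}(1/z)=(-1)^k\sigma_{\overline K}(z)$, where $\overline K$ is the closed cone. Expanding $\sigma_{\overline K}$ as the sum of $\sigma_{J^\circ}$ over all faces $J$ of $K$ then gives
\[
\sigma_{K^\circ}(1/z)=(-1)^k\sum_{J\le K}\sigma_{J^\circ}(z).
\]

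\textbf{Euler-characteristic step (main obstacle).} Sum the last identity over all faces $K$ of the triangulation:
\[
\sigma_C(1/z)=\sum_J\sigma_{J^\circ}(z)\sum_{K\ge J}(-1)^{\dim K}.
\]
For fixed $J$, the inner sum is, up to sign, the reduced Euler characteristic of the link of the corresponding cell of the triangulated ball $\mathcal P$. This link is a sphere when $J^\circ\subseteq C^\circ$ and a ball when $J$ lies on the boundary. So the coefficient is $(-1)^{d+1}$ for interior $J$ and $0$ for boundary $J$, which gives $\sigma_C(1/z)=(-1)^{d+1}\sigma_{C^\circ}(z)$. I expect this topological bookkeeping to be the delicate step. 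If it proves awkward, I would instead use half-open decompositions of the triangulation (in the style of Köppe–Verdoolaege), which avoid links altogether.

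\textbf{From cones back to dilations.} Specialize to $z=(1,\dots,1,x)$. This gives $\Ehr_{\mathcal P}(1/x)=(-1)^{d+1}\sum_{t\ge1}L_{\mathcal P^\circ}(t)x^t$. Finally, use the standard lemma for quasipolynomials: if $L$ is a quasipolynomial with $\sum_{t\ge0}L(t)x^t=R(x)$, then $\sum_{t\ge1}L(-t)x^t=-R(1/x)$ as rational functions. To prove it, split $L$ by residue classes modulo the period, and for each constituent reduce to the monomials $t^j$, where the identity is checked by differentiating the geometric series. Comparing coefficients of $x^t$ yields $L_{\mathcal P}(-t)=(-1)^dL_{\mathcal P^\circ}(t)$.
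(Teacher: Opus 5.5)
The paper gives no proof of this lemma. It is quoted from Macdonald as a black box and used in the Ehrhart-theoretic proof of Theorem~\ref{thm:main-reciprocity} and in Theorem~\ref{thm:equiv-zero-sum-reciprocity}; the first proof of Theorem~\ref{thm:main-reciprocity} avoids it via character sums.

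Your argument is correct and follows Stanley's generating-function route: parallelepiped reciprocity for simplicial cones, then a triangulation with an Euler-characteristic count, then the Popoviciu--Stanley lemma $\sum_{t\ge1}L(-t)x^t=-R(1/x)$. The signs check out. For a face $\tau$ of the triangulation, the inner sum is $(-1)^{|\tau|}\bigl(-\tilde\chi(\operatorname{lk}\tau)\bigr)$. This equals $(-1)^{d+1}$ for interior $\tau$ and $0$ for boundary $\tau$; the apex counts as a boundary cell, since its link is the whole ball. Specializing $z_1=\cdots=z_N=1$ is harmless because every generator $w_i$ has positive height.

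Compared with the citation, your route is self-contained, at the cost of the PL fact about links in triangulated balls. If you want to avoid that fact, count locally instead. Take $y\in J^\circ$ and a small open ball $B$ about $y$ in the span of $C$. The relatively open convex pieces $K^\circ\cap B$ with $K\ge J$ partition $C\cap B$, and each has compactly supported Euler characteristic $(-1)^{\dim K}$. Meanwhile $C\cap B$ itself has $(-1)^{d+1}$ or $0$, according as $y$ is interior or on the boundary.

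For this paper the full strength is not even needed. Every polytope the lemma is applied to, namely $\calP_G$ and $\calP_{G,a}$, is a rational simplex, so the homogenized cone is already simplicial. Your identity $\sigma_{K^\circ}(1/z)=(-1)^k\sigma_{\overline K}(z)$, after $z\mapsto 1/z$, then gives $\sigma_{\overline C}(1/z)=(-1)^{d+1}\sigma_{C^\circ}(z)$ directly. So the step you flagged as the main obstacle disappears in every case the paper actually uses.
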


\begin{proof}[Ehrhart-theoretic proof of Theorem~\ref{thm:main-reciprocity}]
By Proposition~\ref{prop:polytope-interpretation},
\[
        |\M(G,m)|=|m\calP_G\cap \Z^n|=L_{\calP_G}(m).
\]
Since $\dim \calP_G=n-1$, Lemma~\ref{lem:ehrhart-macdonald} gives
\[
        |\M(G,-m)|=L_{\calP_G}(-m)
        =(-1)^{n-1}|m\calP_G^\circ\cap \Z^n|.
\]
By Proposition~\ref{prop:polytope-interpretation}, the latter interior lattice points correspond precisely to zero-sum sequences of length $m$ with full support. Hence
\[
        |\M(G,-m)|=(-1)^{n-1}|\M^\circ(G,m)|.
\]
This completes the proof.
\end{proof}

We now record the equivariant version. Recall that
\[
        \M_a(G,m)=\{S\in \M(G,m):a\cdot S=S\}.
\]
Define
\[
        \M_a^\circ(G,m):=\{S\in \M_a(G,m):\supp(S)=G\}.
\]
Let
\[
        L_{G,a}(m):=|\M_a(G,m)|=|m\calP_{G,a}\cap \Z^{r(a)}|\qquad (m\ge 0).
\]
We write $|\M_a(G,-m)|$ for the value $L_{G,a}(-m)$ of the Ehrhart quasipolynomial of $\calP_{G,a}$.

\begin{theorem}\label{thm:equiv-zero-sum-reciprocity}
Let $G$ be a finite abelian group and let $a\in \Aut(G)$. Set $r(a)=|\calO(a)|$. Then, for every positive integer $m$,
\[
        |\M_a(G,-m)|=(-1)^{r(a)-1}|\M_a^\circ(G,m)|.
\]
Equivalently,
\[
        L_{G,a}(-m)=(-1)^{r(a)-1}|m\calP_{G,a}^\circ\cap \Z^{r(a)}|.
\]
\end{theorem}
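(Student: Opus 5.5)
The plan is to mirror the Ehrhart-theoretic proof of Theorem~\ref{thm:main-reciprocity}, now applied to the simplex $\calP_{G,a}$ of Proposition~\ref{prop:PGa}.

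First, by Proposition~\ref{prop:PGa}, $\calP_{G,a}=T_a(\Delta_G^a)$ is a rational simplex of dimension $r(a)-1$. Also $L_{G,a}(m)=|m\calP_{G,a}\cap\Z^{r(a)}|$ for all $m\ge 0$. So $L_{G,a}$ is the Ehrhart quasipolynomial of a rational polytope, and $L_{G,a}(-m)$ is meaningful. Lemma~\ref{lem:ehrhart-macdonald} then gives
\[
L_{G,a}(-m)=(-1)^{r(a)-1}\,|m\calP_{G,a}^\circ\cap\Z^{r(a)}|.
\]
This is already the second ("equivalently") form of the statement.

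It remains to identify the interior points with $\M_a^\circ(G,m)$. The relative interior of $\Delta_G^a$ consists of the points $(x_O)$ with $x_O>0$ for every orbit $O$ and $\sum_O \ell(O)x_O=1$. By Lemma~\ref{lem:faces-under-linear} (with ambient dimension $r(a)$), $T_a$ carries relative interiors to relative interiors. Hence $m\calP_{G,a}^\circ\cap\Z^{r(a)}$ is in bijection with $m(\Delta_G^a)^\circ\cap\Lambda_G^a$. These are the vectors $(c_O)$ with every $c_O\ge 1$ that satisfy the length and zero-sum conditions.

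Under the bijection \eqref{eq:MaGm-orbit}, $c_O$ is the common multiplicity of the elements of $O$. The orbits partition $G$, so $\supp(S)=G$ holds exactly when $c_O>0$ for all $O$. This gives $|m\calP_{G,a}^\circ\cap\Z^{r(a)}|=|\M_a^\circ(G,m)|$, which completes the argument.

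The only delicate point is bookkeeping: making sure that interiors are taken relative to the affine hull, which has dimension $r(a)-1$ inside $\R^{r(a)}$, so that the sign is $(-1)^{r(a)-1}$. As a sanity check, one could also give a direct proof from \eqref{eq:E-G-a}. One would replace each orbit factor $1/(1-y_O)$ by $y_O/(1-y_O)$ and compare with the rational-function reciprocity $\calE(1/t)$, in the spirit of the direct proof above. I expect the Ehrhart route to be cleaner and would present that one.
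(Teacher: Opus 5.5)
Your proposal is correct and matches the paper's proof: apply Ehrhart--Macdonald reciprocity to the rational simplex $\calP_{G,a}$ of dimension $r(a)-1$, then pull interior lattice points back through $T_a$ to orbit-multiplicity vectors with every $c_O\ge 1$, which are exactly the full-support invariant zero-sum sequences. Your explicit appeal to Lemma~\ref{lem:faces-under-linear}, and to the fact that the orbits partition $G$, only spells out the ``pulling back by $T_a$'' step that the paper states briefly.
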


\begin{proof}
By \eqref{eq:MaGm-orbit}, an element of $\M_a(G,m)$ is represented by a tuple $(c_O)_{O\in \calO(a)}\in \Nzero^{\calO(a)}$ satisfying
\[
        \sum_{O\in \calO(a)}\ell(O)c_O=m,
        \qquad
        \sum_{O\in \calO(a)}c_O\sigma(O)=0.
\]
After applying $T_a$, this is the same as a lattice point of $m\calP_{G,a}\cap \Z^{r(a)}$. Therefore $L_{G,a}(m)$ is the Ehrhart quasipolynomial of the rational simplex $\calP_{G,a}$, which has dimension $r(a)-1$. Ehrhart--Macdonald reciprocity gives
\[
        L_{G,a}(-m)=(-1)^{r(a)-1}|m\calP_{G,a}^\circ\cap \Z^{r(a)}|.
\]
Pulling back by $T_a$, an interior lattice point corresponds to a tuple $(c_O)_O$ with
\[
        c_O\ge 1\quad \text{for all }O\in \calO(a),
\]
and satisfying the same length and zero-sum conditions. This is precisely the condition that every element of $G$ occurs with positive multiplicity in the associated invariant zero-sum sequence. Hence
\[
        m\calP_{G,a}^\circ\cap \Z^{r(a)}\longleftrightarrow \M_a^\circ(G,m),
\]
and the theorem follows.
\end{proof}

Equivalently, define
\[
        \calE_{G,a}(t):=\sum_{m\ge 0}|\M_a(G,m)|t^m,
        \qquad
        \calE_{G,a}^\circ(t):=\sum_{m\ge 1}|\M_a^\circ(G,m)|t^m.
\]
Then Ehrhart--Macdonald reciprocity gives the rational-function identity
\begin{equation}\label{eq:equiv-ehrhart-gf-reciprocity}
        \calE_{G,a}^\circ(t)=(-1)^{r(a)}\calE_{G,a}(1/t).
\end{equation}

\begin{corollary}\label{cor:equiv-zero-sum-shift}
Assume that
\[
        s_G:=\sum_{h\in G}h=0.
\]
Then, for every integer $m\ge |G|$,
\[
        |\M_a(G,-m)|=(-1)^{r(a)-1}|\M_a(G,m-|G|)|.
\]
Equivalently,
\[
        \calE_{G,a}^\circ(t)=t^{|G|}\calE_{G,a}(t)=(-1)^{r(a)}\calE_{G,a}(1/t).
\]
\end{corollary}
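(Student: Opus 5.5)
The plan is a shift bijection on orbit multiplicities, combined with Theorem~\ref{thm:equiv-zero-sum-reciprocity} and \eqref{eq:equiv-ehrhart-gf-reciprocity}.

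\textbf{Step 1: the shift bijection.} By Theorem~\ref{thm:equiv-zero-sum-reciprocity},
\[
|\M_a(G,-m)|=(-1)^{r(a)-1}|\M_a^\circ(G,m)|,
\]
so it suffices to show $|\M_a^\circ(G,m)|=|\M_a(G,m-|G|)|$ for $m\ge |G|$. In the orbit coordinates of \eqref{eq:MaGm-orbit}, an element of $\M_a^\circ(G,m)$ is a tuple $(c_O)_{O\in\calO(a)}$ with $c_O\ge 1$ for all $O$. It satisfies
\[
\sum_O \ell(O)c_O=m,\qquad \sum_O c_O\sigma(O)=0.
\]
We map it to $(c_O-1)_O$, which has nonnegative entries. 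Its length is $m-\sum_O\ell(O)=m-|G|$, since the orbits partition $G$. Its sum is
\[
0-\sum_O\sigma(O)=-s_G=0,
\]
again because the orbits partition $G$, and here we use the hypothesis $s_G=0$. The inverse map adds $1$ to every coordinate and preserves the zero-sum condition for the same reason. So the map is a bijection $\M_a^\circ(G,m)\to\M_a(G,m-|G|)$. It respects $a$-invariance automatically, because we work with orbit multiplicities.

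\textbf{Step 2: the generating functions.} Summing Step~1 over $m\ge 1$ gives $\calE_{G,a}^\circ(t)=t^{|G|}\calE_{G,a}(t)$. For $m<|G|$ both sides vanish: a full-support sequence has length at least $|G|$, and $\M_a(G,m-|G|)$ is empty for negative index. Combining this with \eqref{eq:equiv-ehrhart-gf-reciprocity} gives the stated second equality.

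The only step needing any care is the orbit-sum identity $\sum_{O\in\calO(a)}\sigma(O)=s_G$, which is immediate. The rest is bookkeeping.
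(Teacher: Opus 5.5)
Your proof is correct and is essentially the paper's argument. Both use the shift $(c_O)\mapsto(c_O-1)$ on orbit multiplicities, with $\sum_O\ell(O)=|G|$ and $\sum_O\sigma(O)=s_G=0$, to get $\M_a^\circ(G,m)\leftrightarrow\M_a(G,m-|G|)$, and then conclude via Theorem~\ref{thm:equiv-zero-sum-reciprocity}; your Step~2 also spells out the generating-function form, which the paper leaves implicit.
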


\begin{proof}
Under the assumption $s_G=0$, the map
\[
        (c_O)_{O\in \calO(a)}\longmapsto (c_O-1)_{O\in \calO(a)}
\]
is a bijection from the set of interior orbit multiplicity vectors of length $m$ to the set of nonnegative orbit multiplicity vectors of length $m-|G|$ satisfying the zero-sum condition. Indeed,
\[
        \sum_{O\in \calO(a)}\ell(O)=|G|,
        \qquad
        \sum_{O\in \calO(a)}\sigma(O)=s_G=0.
\]
Therefore $\M_a^\circ(G,m)$ is in bijection with $\M_a(G,m-|G|)$, and the result follows from Theorem~\ref{thm:equiv-zero-sum-reciprocity}.
\end{proof}

\section{Rigidity}\label{sec:rigidity}

In this section we prove the face-stratified rigidity results.

For $m\in \Nzero$, define
\[
        S_{G,m}(q):=\sum_{S\in \M(G,m)}q^{|\supp(S)|}.
\]

\begin{proposition}\label{prop:support-polynomial-formula}
Let $G$ be a finite abelian group of order $n$. Then
\begin{equation}\label{eq:support-polynomial-formula}
S_{G,m}(q)
=\frac1n\sum_{d\mid (n,m)}\varphi_G(d)
\sum_{j=0}^{\min\{n/d,m/d\}}
\binom{n/d}{j}(-1)^{j(d+1)}(q-1)^{dj}
\binom{m/d-j+n/d-1}{n/d-1}.
\end{equation}
\end{proposition}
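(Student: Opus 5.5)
The plan is to extract the coefficient of $t^m$ from the support-refined generating function of Proposition~\ref{prop:FG}. By definition, $\calF_G(q,t)=\sum_{m\ge 0}S_{G,m}(q)\,t^m$, so
\[
        S_{G,m}(q)=[t^m]\,\frac1n\sum_{d\mid n}\varphi_G(d)
        \bigl(1-(-1)^d(q-1)^d t^d\bigr)^{n/d}(1-t^d)^{-n/d}.
\]
We therefore treat each divisor $d\mid n$ separately and expand the two factors of the corresponding summand as power series in $t^d$.

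For the numerator we use the binomial theorem:
\[
        \bigl(1-(-1)^d(q-1)^dt^d\bigr)^{n/d}=\sum_{j=0}^{n/d}\binom{n/d}{j}(-1)^j(-1)^{dj}(q-1)^{dj}t^{dj}.
\]
Here $(-1)^j(-1)^{dj}=(-1)^{j(d+1)}$, which is exactly the sign appearing in \eqref{eq:support-polynomial-formula}. For the denominator we use the negative binomial series:
\[
        (1-t^d)^{-n/d}=\sum_{k\ge 0}\binom{k+n/d-1}{n/d-1}t^{dk}.
\]

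The product of these two series involves only powers of $t^d$. Hence the $d$-summand contributes to $[t^m]$ only when $d\mid m$, which restricts the outer sum to $d\mid (n,m)$. When $d\mid m$, the coefficient of $t^m$ is obtained by pairing the term $j$ with $k=m/d-j$, subject to $0\le j\le\min\{n/d,m/d\}$. This gives the inner sum
\[
        \sum_j\binom{n/d}{j}(-1)^{j(d+1)}(q-1)^{dj}\binom{m/d-j+n/d-1}{n/d-1},
\]
and summing over $d\mid(n,m)$ with weights $\varphi_G(d)/n$ yields \eqref{eq:support-polynomial-formula}.

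There is no real obstacle here. The only points needing care are the sign bookkeeping, which combines to $(-1)^{j(d+1)}$, and the range of summation. The range is automatically correct: $\binom{n/d}{j}=0$ for $j>n/d$, and $j\le m/d$ is forced by $k\ge0$.
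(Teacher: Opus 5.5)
Your proposal is correct and follows essentially the same route as the paper: you take the coefficient of $t^m$ in the generating function of Proposition~\ref{prop:FG} divisor by divisor, expand the numerator by the binomial theorem and the denominator by the negative binomial series, and note that only $d\mid(n,m)$ contributes. Your sign bookkeeping $(-1)^j(-1)^{dj}=(-1)^{j(d+1)}$ and your justification of the summation range are both right.
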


\begin{proof}
The polynomial $S_{G,m}(q)$ is the coefficient of $t^m$ in $\calF_G(q,t)$. Fix $d\mid n$. The $d$th summand in \eqref{eq:FG} is
\[
        \left(\frac{1-(-1)^d(q-1)^dt^d}{1-t^d}\right)^{n/d}.
\]
Expanding the numerator and denominator gives
\[
        (1-(-1)^d(q-1)^dt^d)^{n/d}
        =\sum_{j=0}^{n/d}\binom{n/d}{j}(-1)^{j(d+1)}(q-1)^{dj}t^{dj},
\]
and
\[
        (1-t^d)^{-n/d}=\sum_{r\ge 0}\binom{n/d+r-1}{r}t^{dr}.
\]
A nonzero contribution to $[t^m]$ occurs only when $d\mid m$, and coefficient extraction gives \eqref{eq:support-polynomial-formula}.
\end{proof}


\begin{proposition}\label{prop:support-geometric}
For every $m\in \N$,
\[
        S_{G,m}(q)=\sum_{\emptyset\ne F\in \mathcal F(\calP_G)}q^{\dim F+1}|mF^\circ\cap \Z^n|.
\]
\end{proposition}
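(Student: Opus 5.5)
The plan is to start from the definition $S_{G,m}(q)=\sum_{S\in \M(G,m)}q^{|\supp(S)|}$ and stratify the sum by the support set. For $m\ge 1$ every sequence in $\M(G,m)$ is nonempty, so $\supp(S)$ is a nonempty subset $A\subseteq G$. Grouping terms by $A$ gives
\[
        S_{G,m}(q)=\sum_{\emptyset\ne A\subseteq G}q^{|A|}\,\bigl|\{S\in \M(G,m):\supp(S)=A\}\bigr|
        =\sum_{\emptyset\ne A\subseteq G} q^{|A|}|m\Delta_A^\circ\cap \Lambda_G|,
\]
which is exactly the last identity of Proposition~\ref{prop:multiplicity-bijection}.

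Next, I would transport each summand to $\calP_G$ using Proposition~\ref{prop:polytope-interpretation}. For nonempty $A$, the set $F_A=T(\Delta_A)$ is a face of $\calP_G$ of dimension $|A|-1$, and $|mF_A^\circ\cap\Z^n|=|m\Delta_A^\circ\cap\Lambda_G|$. Hence $q^{|A|}=q^{\dim F_A+1}$, and each summand becomes $q^{\dim F_A+1}|mF_A^\circ\cap\Z^n|$.

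The remaining point is that $A\mapsto F_A$ is a bijection from nonempty subsets of $G$ onto the nonempty faces of $\calP_G$. The nonempty faces of the standard simplex $\Delta_G$ are exactly the $\Delta_A$ with $\emptyset\ne A\subseteq G$, and distinct $A$ give distinct faces because they have distinct vertex sets $\{e_g:g\in A\}$. By Lemma~\ref{lem:faces-under-linear}, $T$ induces a dimension-preserving bijection between the faces of $\Delta_G$ and those of $\calP_G$, and it carries relative interiors to relative interiors. Reindexing the sum over $A$ as a sum over nonempty faces $F$ then yields the claimed formula.

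There is no real obstacle here. The only care needed is the bookkeeping: the empty face is excluded, matching $A\neq\emptyset$, and the restriction $m\in\N$ guarantees that the empty sequence does not occur.
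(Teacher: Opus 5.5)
Your proposal is correct and follows the same route as the paper: stratify $S_{G,m}(q)$ by support, then use Proposition~\ref{prop:polytope-interpretation} to identify the support-$A$ count with $|mF_A^\circ\cap\Z^n|$ for the $(|A|-1)$-dimensional face $F_A=T(\Delta_A)$. Your explicit check that $A\mapsto F_A$ is a bijection onto the nonempty faces (via Lemma~\ref{lem:faces-under-linear}) spells out a step the paper leaves implicit when it says ``summing over nonempty subsets $A\subseteq G$.''
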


\begin{proof}
By Proposition~\ref{prop:polytope-interpretation}, the coefficient of $q^{|A|}$ in $S_{G,m}(q)$ is $|mF_A^\circ\cap \Z^n|$, where $F_A=T(\Delta_A)$ is a face of dimension $|A|-1$. Summing over nonempty subsets $A\subseteq G$ proves the formula.
\end{proof}

Proposition~\ref{prop:support-geometric} explains the mechanism behind rigidity. The face lattice of $\calP_G$ is always that of a simplex; the arithmetic distribution of lattice points across the relative interiors of the faces is what varies with $G$.

\begin{theorem}\label{thm:Li-Zhang}
Let $G$ and $H$ be finite abelian groups. Then
\[
        |\M(G,|H|)|=|\M(H,|G|)|
\]
if and only if
\[
        \varphi_G(d)=\varphi_H(d)\qquad \text{for all }d\mid (|G|,|H|).
\]
\end{theorem}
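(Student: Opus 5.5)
The plan is to reduce everything to the closed formula \eqref{eq:MGFormula-intro} and compare coefficients. Put $n=|G|$, $m=|H|$, $N=n+m$ and $g=(n,m)$. Formula \eqref{eq:MGFormula-intro} applied to $G$ at length $m$, and to $H$ at length $n$, gives
\[
|\M(G,m)|=\frac1N\sum_{d\mid g}\varphi_G(d)\binom{N/d}{n/d},\qquad
|\M(H,n)|=\frac1N\sum_{d\mid g}\varphi_H(d)\binom{N/d}{m/d}.
\]
Since $\binom{N/d}{m/d}=\binom{N/d}{n/d}$, both are combinations of the same numbers $B_d:=\binom{N/d}{n/d}$. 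Writing $\delta_d:=\varphi_G(d)-\varphi_H(d)$, the hypothesis becomes
\[
\sum_{d\mid g}\delta_d B_d=0 .
\]
The ``if'' direction is then immediate: if all $\delta_d$ vanish, the two sums are equal.

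For the ``only if'' direction I would argue by dominance. Note that $\delta_1=0$, because both groups have exactly one element of order $1$. Suppose some $\delta_d\neq 0$, and let $d_0$ be the smallest divisor of $g$ with $\delta_{d_0}\ne0$. Then $d_0\ge 2$ and
\[
|\delta_{d_0}|\,B_{d_0}=\Bigl|\sum_{d\mid g,\ d>d_0}\delta_d B_d\Bigr|.
\]
I want to contradict this by showing that the left side is strictly larger than the right side. For this I would use $|\delta_{d_0}|\ge1$ together with the crude bound $|\delta_d|\le \max\{\varphi_G(d),\varphi_H(d)\}\le \max\{n,m\}$. Sharper bounds are also available: $\varphi_G(d)\le |G[d]|=\prod_i(n_i,d)$, and $\varphi(d)$ divides both $\varphi_G(d)$ and $\varphi_H(d)$, so $\delta_d$ is a multiple of $\varphi(d)$.

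The key estimate is that $B_d$ decays very fast as $d$ grows. For $d<d'$ with $d'\mid g$, write $a=n/d$ and $b=m/d$. Then $B_{d'}$ is a binomial coefficient with top entry at most $N/(d+1)$. Using $\binom{x+y}{x}\ge 2^{\min(x,y)}$-type growth together with log-convexity, I expect the ratio $B_d/B_{d'}$ to exceed $2\max\{n,m\}\,\tau(g)$ as soon as $\min(a,b)$ is not tiny. This would make the tail $\sum_{d>d_0}|\delta_d|B_d$ smaller than $B_{d_0}$.

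The main obstacle is the regime where $d_0$ is close to $g$, so that $n/d_0$ or $m/d_0$ is small. For example, when $d_0=g$ the tail is empty and we are done. The difficult range is when $n/d_0,m/d_0$ are small integers such as $2$ or $3$; there I would need the sharper divisibility and size information on $\delta_d$ rather than the crude $\max\{n,m\}$. In that range I would try first to use the divisibility $\varphi(d)\mid\delta_d$ and the subgroup-count identity $\sum_{e\mid d}\varphi_G(e)=\prod_i(n_i,d)$, reading the equation modulo a suitable prime power dividing $B_{d_0}$ but not the later $B_d$ (Kummer/Lucas). If that bookkeeping becomes unmanageable, the fallback is to invoke the characterization of Li and Zhang \cite{LiZhang}, which establishes exactly this equivalence.
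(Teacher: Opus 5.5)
Your ``if'' direction is fine, and your fallback is exactly the paper's proof: the paper gives no argument of its own for this theorem and simply cites \cite[Theorem~3]{LiZhang}. As a self-contained proof of ``only if'', however, your dominance sketch has a genuine gap. The difficulty is also not where you place it (small $n/d_0$). It is a discrepancy of opposite sign at some $d'$ with $d_0<d'<2d_0$.

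The estimate you expect, $B_{d_0}/B_{d'}>2\max\{n,m\}\tau(g)$ once $\min\{n/d_0,m/d_0\}$ is not tiny, fails for such $d'$, because $\log(B_{d_0}/B_{d'})$ is only of order $N(1/d_0-1/d')$. Take $G=C_{12}$, $H=C_2\oplus C_6$, so $n=m=12$, $N=24$, $d_0=2$, $n/d_0=6$. Then $B_2/B_3=924/70<14$, against your threshold $144$. Even the bare comparison with $|\delta_{d_0}|\ge1$ and $|\delta_d|\le\max\{n,m\}$ fails: $B_2=924<12\,(B_3+B_4+B_6+B_{12})=1176$, although in truth $\sum_d\delta_dB_d=-1824\ne0$. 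The Kummer/Lucas idea does not help as stated. Modulo a prime power dividing $B_{d_0}$, the equation only constrains the tail, whose coefficients $\delta_d$ are unknown.

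What actually closes the argument is the content of \cite{LiZhang}, reproduced for the equal-order analogue in Appendix~\ref{app:comparison}. Its ingredients are:
\begin{enumerate}
\item Normalize $\delta_{d_0}>0$. Since $\varphi_G(d)=\prod_p\varphi_{\operatorname{Syl}_p(G)}(p^{v_p(d)})$, both $a=d_0=p^s$ and the first negative discrepancy $b=q^t$ are prime powers (cf.\ Lemma~\ref{lemma:min_ord}).
\item Since $\sum_{i\le s}\varphi_G(p^i)=|G[p^s]|$ is a power of $p$ that is at least $p^s$, one gets $|\delta_a|\ge a$, far stronger than $\varphi(a)\mid\delta_a$. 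Similarly $|\delta_b|\le|\operatorname{Syl}_q(H)|-q^t$ (cf.\ Lemma~\ref{lemma:min-order_p}).
\item Only the negative terms need bounding, and $\sum_e\varphi_H(e)\le|H|$ replaces your factor $\tau(g)\max\{n,m\}$.
\item If $b\ge 2a$, Lemma~\ref{lemma:Binom_ineq}(ii) gives $aB_a>\max\{n,m\}B_b$, which finishes at once for every size of $n/a$. The factor $a$ from the second item is essential here.
\item If $a<b<2a$, then $p\ne q$. One needs the sharper two-term estimate of Lemma~\ref{lemma:Impartant_estimation}, which exploits the extra factors of $p$ and $q$ in the group orders forced by non-cyclic Sylow subgroups. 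This is followed by a chaining argument through the remaining negative discrepancies in $(b,2b)$, some of which are absorbed by positive discrepancies at other primes.
\end{enumerate}
None of this is in your sketch, so as written your ``only if'' direction rests entirely on the citation.
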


\begin{proof}
This is \cite[Theorem~3]{LiZhang}.
\end{proof}

The following comparison lemma is the main arithmetic input for Theorem~\ref{thm:main-rigidity}. Its proof follows the first-difference strategy of \cite{LiZhang}; we include the details in Appendix~\ref{app:comparison}.

\begin{lemma}\label{lem:rigidity-order-distribution}
Let $G$ and $H$ be finite abelian groups with $|G|=|H|=n$, and let $m\ge n$. Then
\[
        |\M(G,m)|=|\M(H,m)|
\]
if and only if
\[
        \varphi_G(d)=\varphi_H(d)\qquad \text{for every }d\mid (n,m).
\]
\end{lemma}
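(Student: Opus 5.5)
The ``if'' direction is immediate from \eqref{eq:LG-formula}: for fixed $n$ and $m$, the quantity $L_G(m)$ depends on $G$ only through the numbers $\varphi_G(d)$ with $d\mid (n,m)$. For the converse, set $g:=(n,m)$, $\delta(d):=\varphi_G(d)-\varphi_H(d)$ and $B(d):=\binom{m/d+n/d-1}{n/d-1}$. The hypothesis together with \eqref{eq:LG-formula} gives the single linear relation
\[
\sum_{d\mid g}\delta(d)\,B(d)=0,
\]
with $\delta(1)=0$. The task is to show that this one relation forces $\delta\equiv 0$ on the divisors of $g$. I would exploit both the rapid decay of $B(d)$ in $d$, which uses $m\ge n$, and the arithmetic constraints on the integers $\varphi_G(d)$.

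Comparing the terms $\delta(d)B(d)$ directly is not enough. For example, with $d_0=n/2$ and $e=n$ one has $B(d_0)=2m/n+1$ and $B(e)=1$, while $\delta(n)$ can be as large as $\varphi(n)$. So I would first change variables to $N_G(\ell):=|\{x\in G:\ell x=0\}|=\prod_i(n_i,\ell)$, which by \eqref{eq:phi-formula} satisfies $\varphi_G(d)=\sum_{\ell\mid d}\mu(d/\ell)N_G(\ell)$. Write $\Delta(\ell):=N_G(\ell)-N_H(\ell)$. Substituting gives
\[
\sum_{\ell\mid g}\Delta(\ell)\,A(\ell)=0,\qquad A(\ell):=\sum_{\ell\mid d\mid g}\mu(d/\ell)\,B(d).
\]
This is the ``first-difference'' form used in \cite{LiZhang}. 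Since $\varphi_G$ and $N_G$ determine each other on the divisors of $g$, it suffices to show $\Delta(\ell)=0$ for all $\ell\mid g$. Two facts about the new variables help. First, $|\Delta(\ell)|$ is controlled, because $N_G(\ell)\mid \ell^{\,r}$ and $N_G(\ell)\le \min\{n,\ell^{\,r}\}$. Second, each $N_G(\ell)$ is a divisor of $n$, so a nonzero $\Delta(\ell)$ has absolute value at least the gap between two distinct divisors of $n$.

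The key analytic step is a domination estimate. Let $\ell_0$ be the smallest divisor of $g$ with $\Delta(\ell_0)\neq 0$. I would aim to prove
\[
|A(\ell_0)|>\sum_{\ell_0<\ell\mid g}|\Delta(\ell)|\,|A(\ell)|\Big/\min|\Delta(\ell_0)|,
\]
which contradicts the relation above. For this I need lower bounds on the ratios $B(d)/B(e)$ for divisors $d<e$ of $g$. With $N=n/d$ and $M=m/d\ge N$, the binomial $B(d)=\binom{M+N-1}{N-1}$ grows at least like $2^{N-1}$, and when $e\ge 2d$ the ratio $B(d)/B(e)$ is at least of order $\binom{2N'}{N'}$ with $N'=n/e$. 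This should absorb factors of size $n$, except when $n/\ell_0$ is small. In that small-quotient range I would instead use the exact values of $N_G(\ell)$ for $\ell$ close to $n$, where only groups with a large cyclic factor contribute, and check the few remaining cases by hand.

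I expect the main obstacle to be this last regime: $\ell_0$ comparable to $n$, where the binomials are only linear or quadratic in $m/n$. The sign pattern of the Möbius-differenced $A(\ell)$ must also be controlled there, and it is not obviously positive. My first attempt would be to prove that $A(\ell)>0$ and that $A(\ell)$ is decreasing along divisibility chains when $m\ge n$, by writing $A(\ell)$ as an iterated first difference of $x\mapsto\binom{m/x+n/x-1}{n/x-1}$. If that works, then ordering the comparison by $\ell$ and using the divisor-gap lower bound on $|\Delta(\ell_0)|$ should close the argument.
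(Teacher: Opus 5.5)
\paragraph{There is a genuine gap.} Your setup is correct: the single relation $\sum_{d\mid g}\delta(d)B(d)=0$, the M\"obius passage to $N_G(\ell)$, and the coefficients $A(\ell)$. But the converse then rests entirely on a domination inequality that you only announce, and you never extract the arithmetic facts that make such an inequality true.

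Since $N_G(\ell)=\prod_p N_G(p^{v_p(\ell)})$, the following hold:
\begin{itemize}
\item the minimal $\ell_0$ is a prime power $p^s$;
\item $N_G(p^s)$ and $N_H(p^s)$ are distinct powers of $p$, both at least $p^s$, so $|\Delta(\ell_0)|\ge\ell_0$; your ``gap between two divisors of $n$'' gives only $\ge 1$;
\item a first discrepancy at a prime power $q^t$ forces $q^{t+1}\mid n$ (Lemma~\ref{lemma:min-order_p}).
\end{itemize}
With $|\Delta(\ell_0)|\ge\ell_0$ in hand, the regime you single out as the main obstacle ($n/\ell_0$ small) is actually the easy one. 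Lemma~\ref{lemma:Binom_ineq}(ii) gives $\ell_0B(\ell_0)>\max\{m,n\}B(e)$ for every $e\ge2\ell_0$, uniformly and with no hand-checking. Your motivating example illustrates this. If $n/2$ is the minimal discrepancy, then
\[
|\delta(n/2)|B(n/2)\ge\tfrac n2\bigl(\tfrac{2m}{n}+1\bigr)>\varphi(n)\ge|\delta(n)|B(n).
\]
So comparing the terms $\delta(d)B(d)$ directly works there, and that direct comparison is exactly what the paper does.

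\paragraph{The case your sketch does not reach.} The case that genuinely needs an idea is a discrepancy at a power $q^t$ of a different prime with $\ell_0<q^t<2\ell_0$. Every $\ell$ in this window with $\Delta(\ell)\ne0$ must be such a prime power, and a first discrepancy for $q$. None of your tools applies here:
\begin{itemize}
\item $e\ge2d$ fails, so your $\binom{2N'}{N'}$ bound gives nothing;
\item monotonicity of $A$ along divisibility chains is irrelevant, since $p^s\nmid q^t$;
\item $|\Delta(q^t)|$ can be as large as $q^\beta-q^t$ (with $q^\beta$ the exact power of $q$ dividing $n$), while $A(\ell_0)/A(q^t)$ is large only because of arithmetic input you have not supplied.
\end{itemize}
What rescues the argument is that $p^{s+1}q^{t+1}\mid n$, so $n(1/\ell_0-1/q^t)\ge pq\ge6$. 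This is fed into a sharp ratio estimate of the type of Lemma~\ref{lemma:Impartant_estimation}, including its separate $\{2,3\}$ case. Because several prime powers of distinct primes can fall into such windows, one also needs a budget-splitting, telescoping chain $b_iB(b_i)>(b_{i+1}+|\Delta(b_{i+1})|)B(b_{i+1})$. That chain has extra subcases when $p^{s+1}$ itself lands in a later window, which forces $p\in\{2,3\}$. None of this appears in your sketch.

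The change of variables also has a cost you do not account for. The paper absorbs all terms with $e\ge2a$ at once via $\sum_e\varphi_H(e)\le n$. By contrast, $\sum_\ell|\Delta(\ell)|$ can be of order $n$ times the number of divisors of $g$, so even the easy regime needs more room than a single factor of $n$. As written, the converse direction is not proved.
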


\begin{proof}[Proof of Theorem~\ref{thm:main-rigidity}]
Assume that $|m\calP_G\cap \Z^n|=|m\calP_H\cap \Z^n|$. Equivalently, $|\M(G,m)|=|\M(H,m)|$. By Lemma~\ref{lem:rigidity-order-distribution},
\[
        \varphi_G(d)=\varphi_H(d)
        \qquad \text{for every }d\mid (n,m).
\]
Proposition~\ref{prop:support-polynomial-formula} then gives
\[
        S_{G,m}(q)=S_{H,m}(q).
\]
Comparing the coefficients of $q^s$ for $1\le s\le n$ shows that
\[
        |\{S\in \M(G,m):|\supp(S)|=s\}|
        =|\{T\in \M(H,m):|\supp(T)|=s\}|.
\]
By Proposition~\ref{prop:support-geometric}, support size $s$ corresponds to relative open faces of dimension $s-1$. Hence, for every $0\le k\le n-1$,
\[
        \sum_{\substack{F\in \mathcal F(\calP_G)\\ \dim F=k}} |mF^\circ\cap \Z^n|
        =
        \sum_{\substack{F\in \mathcal F(\calP_H)\\ \dim F=k}} |mF^\circ\cap \Z^n|.
\]
Taking $k=n-1$ gives the equality of interior lattice-point counts.
\end{proof}

\begin{remark}\label{finercorr}
Let $m=9$ and $n=8$, consider the two groups $G=C_2 \oplus C_4$ and $H=C_2 \oplus C_2 \oplus C_2$.
For a nonempty subset $A_1 \subseteq G$ (resp. $A_2 \subseteq H$ ), let $A_1^c$ (resp. $A_2^c$) denote the complement of $A_1$ (resp. $A_2$) in $G$ (resp. $H$).
Consider all faces of dimension $6$ (so a lattice point in such a relative open face has exactly one zero coordinate, and we use the complements (a singleton set in this case) to index the subsets for convenience), we obtain the following counts:
\begin{table}[ht]
\centering
\resizebox{\textwidth}{!}{
$\begin{array}{|c|c|c|c|c|c|c|c|c|}
\hline
A_1^c
& (0,0) & (0,1) & (0,2) & (0,3)
& (1,0) & (1,1) & (1,2) & (1,3) \\
\hline
| mF_{A_1}^\circ \cap \mathbb{Z}^n |
& 5 & 3 & 5 & 3 & 3 & 3 & 3 & 3 \\
\hline
A_2^c
& (0,0,0) & (0,0,1) & (0,1,0) & (0,1,1)
& (1,0,0) & (1,0,1) & (1,1,0) & (1,1,1) \\
\hline
| mF_{A_2}^\circ \cap \mathbb{Z}^n |
& 7 & 3 & 3 & 3 & 3 & 3 & 3 & 3 \\
\hline
\end{array}$
}
\end{table}

The sums of the numbers of lattice points in the relative interiors of all $6$-dimensional faces of the two polytopes are both equal to 28. However, they are different as multisets. It follows that the rigidity in Theorem~\ref{thm:main-rigidity} can not be strengthened to a face to face correspondence directly.
\end{remark}

\begin{proposition}\label{prop:reciprocal-support}
Let $G$ and $H$ be finite abelian groups. Then
\begin{equation}\label{eq:reciprocal-support}
        \sum_{S\in \M(G,|H|)}q^{|\supp(S)|}
        =
        \sum_{T\in \M(H,|G|)}q^{|\supp(T)|}
\end{equation}
if and only if
\[
        |\M(G,|H|)|=|\M(H,|G|)|.
\]
\end{proposition}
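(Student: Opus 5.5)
One direction is immediate: setting $q=1$ in \eqref{eq:reciprocal-support} turns both sides into $|\M(G,|H|)|$ and $|\M(H,|G|)|$. So equality of the support polynomials implies equality of the counts. For the converse, write $n=|G|$ and $m=|H|$, and assume $|\M(G,m)|=|\M(H,n)|$. By Theorem~\ref{thm:Li-Zhang}, this gives
\[
\varphi_G(d)=\varphi_H(d)\quad\text{for all } d\mid (n,m).
\]
It then suffices to show that $S_{G,m}(q)$ and $S_{H,n}(q)$, written via Proposition~\ref{prop:support-polynomial-formula}, agree term by term in $d$.

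Apply \eqref{eq:support-polynomial-formula} to both sides. For each $d\mid (n,m)$, put $a=n/d$ and $b=m/d$. The $G$-side contributes
\[
\frac1n\varphi_G(d)\sum_{j}\binom{a}{j}(-1)^{j(d+1)}(q-1)^{dj}\binom{a+b-j-1}{a-1},
\]
and the $H$-side contributes
\[
\frac1m\varphi_H(d)\sum_j\binom{b}{j}(-1)^{j(d+1)}(q-1)^{dj}\binom{a+b-j-1}{b-1}.
\]
In both sums $j$ runs from $0$ to $\min\{a,b\}$. Since the $\varphi$ values agree, the key step is the binomial identity
\[
\frac1a\binom{a}{j}\binom{a+b-j-1}{a-1}=\frac1b\binom{b}{j}\binom{a+b-j-1}{b-1}
\]
for every $0\le j\le\min\{a,b\}$. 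The factor $1/d$ is common to both sides and cancels.

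To verify this identity, expand in factorials. The left side is
\[
\frac{(a-1)!\,(a+b-j-1)!}{j!\,(a-j)!\,(a-1)!\,(b-j)!}=\frac{(a+b-j-1)!}{j!\,(a-j)!\,(b-j)!}.
\]
This expression is symmetric in $a$ and $b$, so the right side equals it as well. The edge case $j=\min\{a,b\}$ needs a check that no binomial coefficient vanishes asymmetrically. For example, when $j=a<b$ the right side contains $\binom{b-1}{b-1}=1$, and the factorial form still covers it since all the factorials involved are of nonnegative integers. Summing over $j$ and over $d\mid(n,m)$ then gives \eqref{eq:reciprocal-support}.

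The step I expect to need the most care is this bookkeeping: confirming that the summation ranges in $j$ coincide on the two sides and that the sign factors $(-1)^{j(d+1)}$ match. Both depend only on $d$ and $j$, not on which group is involved, so once the index sets are aligned the argument is purely the symmetric factorial identity above.
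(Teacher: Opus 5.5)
Your proposal is correct and follows essentially the same route as the paper. You invoke Theorem~\ref{thm:Li-Zhang} to get $\varphi_G(d)=\varphi_H(d)$ for all $d\mid(|G|,|H|)$, then match the $d$-th contributions term by term in $j$ via the symmetric factorial form $\frac1a\binom{a}{j}\binom{a+b-j-1}{a-1}=\frac{(a+b-j-1)!}{j!\,(a-j)!\,(b-j)!}$; this is the paper's identity $B\binom{A}{j}\binom{A+B-j-1}{B-j}=A\binom{B}{j}\binom{A+B-j-1}{A-j}$ divided by $AB$. The only cosmetic difference is that you read the coefficients off Proposition~\ref{prop:support-polynomial-formula}, whereas the paper re-extracts them from $\calF_G(q,t)$ through $C_d(A,B)$.
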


\begin{proof}
The implication from \eqref{eq:reciprocal-support} to equality of total counts follows by setting $q=1$. Conversely, assume that the total counts are equal. By Theorem~\ref{thm:Li-Zhang},
\[
        \varphi_G(d)=\varphi_H(d)
        \qquad \text{for every }d\mid (|G|,|H|).
\]
Let $A=|G|/d$ and $B=|H|/d$. The contribution of the divisor $d$ to $[t^{|H|}]\calF_G(q,t)$ is
\[
        \frac{\varphi_G(d)}{|G|}\,C_d(A,B),
\]
where
\[
        C_d(A,B):=[u^B]\left(\frac{1-(-1)^d(q-1)^d u}{1-u}\right)^A.
\]
Similarly, the contribution of $d$ to $[t^{|G|}]\calF_H(q,t)$ is
\[
        \frac{\varphi_H(d)}{|H|}\,C_d(B,A).
\]
Thus it is enough to prove
\begin{equation}\label{eq:Cd-symmetry}
        \frac1{|G|}C_d(A,B)=\frac1{|H|}C_d(B,A),
\end{equation}
or equivalently $B C_d(A,B)=A C_d(B,A)$.

Let $\alpha_d=(-1)^d(q-1)^d$. Then
\[
        C_d(A,B)=[u^B]\left(\frac{1-\alpha_d u}{1-u}\right)^A.
\]
Expanding gives
\[
        C_d(A,B)=\sum_{j=0}^{\min\{A,B\}}\binom{A}{j}(-\alpha_d)^j
        \binom{A+B-j-1}{B-j}.
\]
Likewise,
\[
        C_d(B,A)=\sum_{j=0}^{\min\{A,B\}}\binom{B}{j}(-\alpha_d)^j
        \binom{A+B-j-1}{A-j}.
\]
For every $0\le j\le \min\{A,B\}$,
\[
        B\binom{A}{j}\binom{A+B-j-1}{B-j}
        =A\binom{B}{j}\binom{A+B-j-1}{A-j},
\]
which is an immediate factorial computation. Therefore $B C_d(A,B)=A C_d(B,A)$, proving \eqref{eq:Cd-symmetry}. Summing over $d\mid (|G|,|H|)$ gives \eqref{eq:reciprocal-support}.
\end{proof}

\begin{proof}[Proof of Theorem~\ref{thm:main-reciprocal-rigidity}]
The hypothesis is equivalent to
\[
        |\M(G,|H|)|=|\M(H,|G|)|.
\]
By Proposition~\ref{prop:reciprocal-support},
\[
        S_{G,|H|}(q)=S_{H,|G|}(q).
\]
Comparing coefficients of $q^s$ for $1\le s\le \min\{n,m\}$ and applying Proposition~\ref{prop:support-geometric} gives, for every $0\le k<\min\{n,m\}$,
\[
        \sum_{\substack{F\in \mathcal F(\calP_G)\\ \dim F=k}} |mF^\circ\cap \Z^n|
        =
        \sum_{\substack{F\in \mathcal F(\calP_H)\\ \dim F=k}} |nF^\circ\cap \Z^m|.
\]
\end{proof}

\section{Cyclic sieving phenomenon}\label{sec:cyclic-sieving}

In this section we relate the equivariant Ehrhart theory of zero-sum polytopes to the cyclic sieving phenomenon.

\begin{definition}[Reiner-Stanton-White, \cite{RSW}]\label{def:CSP}
Let $X$ be a finite set with an action of a cyclic group $C_N=\langle c\rangle$ of order $N$. Let $f(q)\in \Z_{\ge 0}[q]$ satisfy $f(1)=|X|$, and let $\zeta_N$ be a primitive $N$th root of unity. The triple
\[
        (X,C_N,f(q))
\]
exhibits the \emph{cyclic sieving phenomenon} if, for every integer $j$,
\[
        |\{x\in X:c^j\cdot x=x\}|=f(\zeta_N^j).
\]
\end{definition}

For a positive integer $n$, define
\[
        [n]_q=1+q+\cdots+q^{n-1},
        \qquad
        [n]!_q=[n]_q[n-1]_q\cdots [1]_q,
\]
and
\[
        \qbinom{n}{k}=\frac{[n]!_q}{[k]!_q[n-k]!_q}.
\]
For coprime positive integers $n,m$, define the rational $q$-Catalan polynomial
\[
        \Cat_{n,m}(q)=\frac1{[n+m]_q}\qbinom{n+m}{m}.
\]
It is known that $\Cat_{n,m}(q)$ is a polynomial in $q$ with nonnegative integer coefficients; see \cite[Proposition~2.5.2]{Haiman1994}. In algebraic combinatorics, the study of $q$-analogues (or $q,t$-analogues) of (rational) Catalan numbers has received a lot of attention; see \cite{Bergeron,GarsiaHaiman,Haglund,Haiman2001,Haiman2002}.

\begin{proposition}\label{prop:fixed-point-free}
Let $a\in \Aut(G)$, and assume that $\Fix_G(a)=\{0\}$. Then $\sigma(O)=0$ for every nonzero orbit $O\in \calO(a)$, and
\[
        \calE_{G,a}(t)=\prod_{O\in \calO(a)}\frac1{1-t^{\ell(O)}}.
\]
If all nonzero orbits in $\calO(a)$ have common size $d$, then
\begin{equation}\label{eq:fixed-point-count-common-size}
        |\M_a(G,m)|=\binom{(n-1)/d+\lfloor m/d\rfloor}{(n-1)/d}.
\end{equation}
\end{proposition}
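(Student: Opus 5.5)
The proof has three parts: show each nonzero orbit sums to zero, collapse the character average, and then count solutions.

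First, take a nonzero orbit $O=\{g,ag,\dots,a^{\ell-1}g\}$ with $\ell=\ell(O)$. Its sum $\sigma(O)$ is fixed by $a$, because $a$ permutes $O$. Since $\Fix_G(a)=\{0\}$, this forces $\sigma(O)=0$. The orbit $\{0\}$ has $\sigma=0$ trivially, so $\sigma(O)=0$ for every $O\in\calO(a)$. (Fixed-point-freeness also gives $1+a+\cdots+a^{\ell-1}=0$ on $G$, but we only need the fixed-point argument.)

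Next, substitute $\sigma(O)=0$ into \eqref{eq:E-G-a} from Proposition~\ref{prop:equiv-support}. Then $\chi(\sigma(O))=1$ for every character $\chi$, so all $|G|$ summands are equal to $\prod_O (1-t^{\ell(O)})^{-1}$. Averaging over $\widehat G$ leaves exactly that product, which is the claimed formula for $\calE_{G,a}(t)$. The same conclusion follows directly from \eqref{eq:MaGm-orbit}: the zero-sum condition is vacuous, so invariant sequences correspond to arbitrary $(c_O)\in\Nzero^{\calO(a)}$.

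For the common-size case, the orbits are $\{0\}$, of length $1$, together with $(n-1)/d$ orbits of length $d$. Hence
\[
\calE_{G,a}(t)=\frac{1}{1-t}\cdot\frac{1}{(1-t^d)^{(n-1)/d}}.
\]
We extract $[t^m]$ by counting choices of $c_{\{0\}}\ge0$ and $k=\sum_{O\ne\{0\}}c_O\ge0$ with $c_{\{0\}}+dk=m$. For each $k$ with $0\le k\le\lfloor m/d\rfloor$, $c_{\{0\}}$ is determined, and the number of distributions of $k$ over the $N:=(n-1)/d$ orbits is $\binom{k+N-1}{N-1}$. Summing by the hockey-stick identity gives
\[
\sum_{k=0}^{\lfloor m/d\rfloor}\binom{k+N-1}{N-1}=\binom{N+\lfloor m/d\rfloor}{N},
\]
which is \eqref{eq:fixed-point-count-common-size}.

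No step is a serious obstacle. The point needing most care is the invariance argument for $\sigma(O)$. The edge case $G$ trivial (so $N=0$) is consistent with the formula.
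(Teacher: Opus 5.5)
Your proof is correct and follows the paper's own argument: $a$ fixes $\sigma(O)$, so $\sigma(O)=0$; every character then contributes the same product in \eqref{eq:E-G-a}; and the common-size count is the hockey-stick sum obtained from $[t^m](1-t)^{-1}(1-t^d)^{-(n-1)/d}$. One small caution about your unused parenthetical: the identity $1+a+\cdots+a^{\ell-1}=0$ on all of $G$ is false in general when orbit lengths differ (for example, $a(x,y)=(-x,2y)$ on $C_3\oplus C_7$ has no nonzero fixed points and an orbit of length $2$, yet $(1+a)(0,1)=(0,3)\neq 0$), although it does hold on the orbit $O$ itself.
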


\begin{proof}
If $O$ is a nonzero orbit, then
\[
        a(\sigma(O))=\sigma(O).
\]
Since $\Fix_G(a)=\{0\}$, it follows that $\sigma(O)=0$. The orbit $\{0\}$ contributes the factor $(1-t)^{-1}$ in \eqref{eq:E-G-a}, and every nonzero orbit $O$ contributes $(1-t^{\ell(O)})^{-1}$. This proves the product formula.

If all nonzero orbits have size $d$, then there are $(n-1)/d$ of them. Hence
\[
        \calE_{G,a}(t)=\frac1{(1-t)(1-t^d)^r},
        \qquad r=\frac{n-1}{d}.
\]
Coefficient extraction gives
\[
        [t^m]\frac1{(1-t)(1-t^d)^r}
        =\sum_{j=0}^{\lfloor m/d\rfloor}\binom{r+j-1}{r-1}
        =\binom{r+\lfloor m/d\rfloor}{r},
\]
which is \eqref{eq:fixed-point-count-common-size}.
\end{proof}

For $f(q)=\qbinom{m}{n},\ g(q)=[n]_q\in \Z[q]$, we denote
$\genfrac{[}{]}{0pt}{}{m}{n}_{\zeta}\coloneqq f(\zeta)$ and $[n]_\zeta\coloneqq g(\zeta)$ for simplicity, where $\zeta\in\C$.

\begin{lemma}\label{lem:q-lucas-catalan}
Let $n,m\in \N$ with $(n,m)=1$, let $k\mid (n-1)$, and let $\zeta$ be a primitive $k$th root of unity. Then
\[
        \Cat_{n,m}(\zeta)=\binom{(n-1)/k+\lfloor m/k\rfloor}{(n-1)/k}.
\]
\end{lemma}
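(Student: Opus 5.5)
We evaluate $\Cat_{n,m}(q)$ at $\zeta$ by rewriting it so that the vanishing factor $[n+m]_q$ cancels before we specialize. Since $\qbinom{n+m}{m}=\frac{[n+m]_q}{[n]_q}\qbinom{n+m-1}{m}$, we have
\[
\Cat_{n,m}(q)=\frac{1}{[n]_q}\qbinom{n+m-1}{m}=\frac{1}{[m]_q}\qbinom{n+m-1}{n}.
\]
We use the second form. Because $k\mid n-1$ and $(n,m)=1$, we have $\gcd(k,n)=1$, but $k$ may divide $m$. So we split into the cases $k\nmid m$ and $k\mid m$.

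\emph{Case $k\nmid m$.} Then $[m]_\zeta\neq0$. Write $n-1=ka$ and $m=kb+s$ with $0<s<k$. We evaluate $\qbinom{n+m-1}{n}=\qbinom{ka+kb+s}{ka+1}$ with the $q$-Lucas theorem: for $N=kN_1+N_0$ and $K=kK_1+K_0$ with $0\le N_0,K_0<k$,
\[
\genfrac{[}{]}{0pt}{}{N}{K}_\zeta=\binom{N_1}{K_1}\genfrac{[}{]}{0pt}{}{N_0}{K_0}_\zeta .
\]
Here $N_1=a+b$, $N_0=s$, $K_1=a$, $K_0=1$, so the value is $\binom{a+b}{a}[s]_\zeta$. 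Also $[m]_\zeta=[s]_\zeta$, since $[m]_q=[kb]_q+q^{kb}[s]_q$ and $[kb]_\zeta=0$, $\zeta^{kb}=1$. Dividing gives $\binom{a+b}{a}$, which is the claim because $\lfloor m/k\rfloor=b$. (The case $k=1$ is simply $\Cat_{n,m}(1)=\Cat_{n,m}$, and the formula is consistent with it.)

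\emph{Case $k\mid m$.} Now $[m]_\zeta=0$, so we take a limit; this is the main obstacle. Since $\Cat_{n,m}(q)$ is a polynomial, its value at $\zeta$ equals the limit of the ratio as $q\to\zeta$. We factor out the cyclotomic parts.

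Write $m=kb$, $n-1=ka$, $n+m-1=k(a+b)$. For $N=kN'$, the product $[N]!_q$ contains exactly $N'$ factors $[jk]_q$, each divisible by $\Phi_k$ to first order. The quotient $[jk]_q/[k]_q$ evaluates at $\zeta$ to $j$, and each factor $[i]_q$ with $k\nmid i$ is nonzero at $\zeta$. Using these, we get
\[
\frac{1}{[m]_q}\qbinom{n+m-1}{n}=\frac{1}{[m]_q}\cdot\frac{[ka+kb]!_q}{[ka+1]!_q\,[kb-1]!_q}.
\]
Count the zero orders: the numerator has $a+b$ of them, while the denominator has $a+1+(b-1)=a+b$ (namely $a$ from $[ka+1]!$, $b-1$ from $[kb-1]!$, and $1$ from $[m]$). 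So the orders match and the value is finite.

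The cyclotomic-factor ratios give $\frac{(a+b)!}{a!\,(b-1)!\,b}=\binom{a+b}{a}$, using that $[kb]_q/[k]_q\to b$. It remains to check that the non-vanishing factors cancel. In the numerator these are the products of $[i]_\zeta$ over $i\le k(a+b)$ with $k\nmid i$, which give $([k-1]!_\zeta)^{a+b}$. The denominator gives $([k-1]!_\zeta)^{a}\cdot[1]_\zeta$ from $[ka+1]!$ and $([k-1]!_\zeta)^{b}$ from $[kb-1]!$, using the periodicity $[i+k]_\zeta=[i]_\zeta$. Since $[1]_\zeta=1$, everything cancels, and we obtain $\binom{a+b}{a}=\binom{(n-1)/k+m/k}{(n-1)/k}$, as claimed.

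The delicate point is keeping the bookkeeping of these $\Phi_k$-orders exact. In particular we must check $\gcd(k,n)=1$, so that the factor $[n]$ never vanishes at $\zeta$, and that the units match exactly.
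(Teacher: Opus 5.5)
Your argument is correct for $k\ge 2$. However, your parenthetical claim that the case $k=1$ is consistent with the formula is false, and the statement itself fails there when $n\ge 2$. With $\zeta=1$,
\[
\Cat_{n,m}(1)=\frac{1}{n+m}\binom{n+m}{n},\qquad \binom{(n-1)+m}{n-1}=\frac{n}{n+m}\binom{n+m}{n}=n\,\Cat_{n,m}(1).
\]
For instance, $n=2$, $m=1$ gives $1$ on the left and $2$ on the right. Your case $k\mid m$ computation shows where the factor $n$ is lost. The periodicity $[i+k]_\zeta=[i]_\zeta$ relies on $[k]_\zeta=0$, and the factor $[ka+1]_\zeta$, which you evaluate as $[1]_\zeta=1$, equals $n$ when $\zeta=1$. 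You should therefore state the hypothesis $k\ge 2$ explicitly rather than assert consistency. The paper's proof has the same tacit restriction, since its step $[n]_\zeta=1$ fails for $\zeta=1$. This does not affect the proof of Theorem~\ref{thm:semiregular-CSP}, which only uses the lemma for roots of unity of order $r=\ord(c^j)\ge 2$.

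For $k\ge 2$, your proof is valid but more roundabout than the paper's. The paper starts from the first form you wrote down, $\Cat_{n,m}(q)=\frac{1}{[n]_q}\qbinom{n+m-1}{n-1}$. Since $n\equiv 1\pmod k$, one has $[n]_\zeta=1$ whether or not $k\mid m$. Writing $n-1=ka$ and $m=kb+s$ with $0\le s<k$, a single $q$-Lucas evaluation
\[
\genfrac{[}{]}{0pt}{}{k(a+b)+s}{ka}_{\zeta}=\binom{a+b}{a}\genfrac{[}{]}{0pt}{}{s}{0}_{\zeta}=\binom{a+b}{a}
\]
finishes the proof with no case distinction. By putting $[m]_q$ in the denominator instead, you create a vanishing denominator when $k\mid m$ and must remove it by cyclotomic bookkeeping, via $[jk]_q=[k]_q[j]_{q^k}$. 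That bookkeeping is correct: the $a+b$ copies of $[k]_q$ cancel, the $q^k$-parts give $\frac{(a+b)!}{a!\,(b-1)!\,b}=\binom{a+b}{a}$, and the nonvanishing parts cancel by periodicity. But it gains nothing over the paper's one-line evaluation.
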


\begin{proof}
Note that $ \Cat_{n,m}(q)=\frac1{[n]_q}\qbinom{n+m-1}{n-1}$.
Write $n=kr+1$ and $m=ks+b$ with $0\le b<k$. Since $[n]_{\zeta}=1$, one has
\[
        \Cat_{n,m}(\zeta)=\genfrac{[}{]}{0pt}{}{n+m-1}{n-1}_{\zeta}.
\]
Now $n+m-1=k(r+s)+b$ and $n-1=kr$. Following the same approach as \cite[Section~2]{Sagan}, we have
\[
        \genfrac{[}{]}{0pt}{}{n+m-1}{n-1}_{\zeta}
        =\binom{r+s}{r}
        \genfrac{[}{]}{0pt}{}{b}{0}_{\zeta}
        =\binom{r+s}{r}.
\]
Since $s=\lfloor m/k\rfloor$, the result follows.
\end{proof}

\begin{lemma}\label{lem:q-minus-one-catalan}
Let $n=2r$ be even, let $m=2s+1$ be odd, and let $\zeta=-1$. Then
\[
        \Cat_{n,m}(-1)=\binom{r+s}{r}.
\]
\end{lemma}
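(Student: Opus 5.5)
We evaluate $\Cat_{n,m}(q)=\frac{1}{[n+m]_q}\qbinom{n+m}{m}$ at $q=-1$ by writing it in a form where no $q$-integer in a denominator vanishes at $q=-1$, and then apply the $q$-Lucas evaluation used in the proof of Lemma~\ref{lem:q-lucas-catalan}.

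As in the proof of Lemma~\ref{lem:q-lucas-catalan}, we start from
\[
\Cat_{n,m}(q)=\frac{1}{[n]_q}\qbinom{n+m-1}{n-1}.
\]
This is unsuitable here, because $[n]_{-1}=[2r]_{-1}=0$. We therefore use the symmetric expression
\[
\Cat_{n,m}(q)=\frac{1}{[m]_q}\qbinom{n+m-1}{m-1},
\]
which follows from
\[
\frac{1}{[n+m]_q}\qbinom{n+m}{m}=\frac{[n+m-1]!_q}{[m]!_q\,[n]!_q}.
\]
Since $m=2s+1$ is odd, $[m]_{-1}=1$. Hence
\[
\Cat_{n,m}(-1)=\genfrac{[}{]}{0pt}{}{n+m-1}{m-1}_{-1}=\genfrac{[}{]}{0pt}{}{2(r+s)}{2s}_{-1}.
\]

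The key step is the $q$-Lucas theorem at a primitive $k$th root of unity $\zeta$ (here $k=2$, $\zeta=-1$):
\[
\genfrac{[}{]}{0pt}{}{ka+b}{kc+e}_{\zeta}=\binom{a}{c}\genfrac{[}{]}{0pt}{}{b}{e}_{\zeta}\qquad (0\le b,e<k).
\]
This is exactly the evaluation invoked in Lemma~\ref{lem:q-lucas-catalan} following \cite[Section~2]{Sagan}. With $a=r+s$, $b=0$, $c=s$, $e=0$ it gives $\binom{r+s}{s}=\binom{r+s}{r}$, as claimed.

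The only delicate point is the choice of representation. The expression used in the preceding lemma has a pole–zero cancellation at $q=-1$, which is why we switch to the $[m]_q$ form. To be safe, the $q$-Lucas evaluation at $q=-1$ can be checked directly. Write $\qbinom{2N}{2s}$ as a quotient of products of $[j]_q$. The factors with even $j$ pair off as $[2i]_q=(1+q)[i]_{q^2}$. The resulting powers of $1+q$ cancel, since the numerator and the denominator each contain $N$ even indices. What remains are ratios of $[i]_{q^2}$ evaluated at $q^2=1$, namely $\binom{N}{s}$, together with odd-index factors $[j]_{-1}=1$.
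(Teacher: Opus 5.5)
Your proof is correct and follows the paper's route: the paper also implicitly switches to the form $\frac{1}{[m]_q}\qbinom{n+m-1}{m-1}$, uses $[m]_{-1}=1$ to reduce to $\genfrac{[}{]}{0pt}{}{2(r+s)}{2r}_{-1}$, and then asserts without proof that this equals $\binom{r+s}{r}$. Your $q$-Lucas citation and the explicit $(1+q)$-cancellation check supply the detail the paper leaves to the reader.
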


\begin{proof}
Since $m$ is odd and $[m]_{-1}=1$,
\[
        \Cat_{n,m}(-1)=
        \genfrac{[}{]}{0pt}{}{2(r+s)}{2r}_{-1}.
\]
It is easy to see that
\[
        \genfrac{[}{]}{0pt}{}{2(r+s)}{2r}_{-1}=\binom{r+s}{r}.
\]
\end{proof}

\begin{definition}\label{def:semiregular}
Let $c\in \Aut(G)$ have finite order $d$. We say that $c$ is \emph{semiregular on $G\setminus\{0\}$} if
\[
        \Fix_G(c^j)=\{0\}
        \qquad \text{for every }1\le j<d.
\]
Equivalently, every nonzero orbit of the cyclic group $\langle c\rangle$ has size $d$.
\end{definition}

\begin{proof}[Proof of Theorem \ref{thm:semiregular-CSP}]
If $d\mid j$, then $\widetilde c^{\,j}$ is the identity, and the required equality is
\[
        |m\calP_G\cap \Z^n|=\Cat_{n,m}(1),
\]
which follows from \eqref{eq:coprime-reciprocity}. Assume now that $d\nmid j$, and put
\[
        r:=\ord(c^j)=\frac{d}{(d,j)}.
\]
Since $c$ is semiregular, so is $c^j$ on $G\setminus\{0\}$. Hence every nonzero orbit of $c^j$ has size $r$, and Proposition~\ref{prop:fixed-point-free} gives
\[
        |(m\calP_G)^{\widetilde c^{\,j}}\cap \Z^n|
        =\binom{(n-1)/r+\lfloor m/r\rfloor}{(n-1)/r}.
\]
Since $r\mid d$ and $d\mid (n-1)$, Lemma~\ref{lem:q-lucas-catalan} applies. The root $\zeta_d^j$ has order $r$, so
\[
        \Cat_{n,m}(\zeta_d^j)
        =\binom{(n-1)/r+\lfloor m/r\rfloor}{(n-1)/r}.
\]
Thus the fixed-point count agrees with the required root-of-unity evaluation for every group element.
\end{proof}

The following result shows that the semiregular hypothesis in Theorem \ref{thm:semiregular-CSP} is essentially sharp.

\begin{theorem}\label{thm:semiregular-converse}
Let $a\in \Aut(G)$ have order $d$, and assume that $d\mid (n-1)$, where $n=|G|$. Suppose that
\[
        |\M_a(G,k)|=\Cat_{n,k}(\zeta_d)
\]
holds for infinitely many positive integers $k$ with $(n,k)=1$, where $\zeta_d$ is a primitive $d$th root of unity. Then $a$ is semiregular on $G\setminus\{0\}$.
\end{theorem}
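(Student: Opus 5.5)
We compare growth rates in $k$ of the two sides. Lemma~\ref{lem:q-lucas-catalan} applies with $\zeta=\zeta_d$ and $d\mid(n-1)$, so for every $k$ coprime to $n$
\[
        \Cat_{n,k}(\zeta_d)=\binom{(n-1)/d+\lfloor k/d\rfloor}{(n-1)/d}.
\]
As a function of $k$, this is a quasipolynomial of degree $(n-1)/d$ with leading coefficient $\frac{1}{((n-1)/d)!\,d^{(n-1)/d}}$. On the other side, $|\M_a(G,k)|=L_{G,a}(k)$ is the Ehrhart quasipolynomial of the rational simplex $\calP_{G,a}$ of dimension $r(a)-1$ (Proposition~\ref{prop:PGa}). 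Its degree is exactly $r(a)-1$, with positive leading coefficient equal to the relative volume. This is constant in $k$ up to the constituent-independence of the top coefficient for full-dimensional rational polytopes.

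The plan is to show $r(a)-1=(n-1)/d$ and then deduce semiregularity. Equality at infinitely many $k$ coprime to $n$ forces the degrees to agree. Since there are finitely many residue classes, infinitely many such $k$ lie in a single class modulo $\mathrm{lcm}(d,\exp(G))$, where both sides are honest polynomials in $k$; two polynomials agreeing at infinitely many points are equal. Hence $r(a)-1=(n-1)/d$, so $r(a)=1+(n-1)/d$.

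Now count orbits. The orbit $\{0\}$ has size $1$, and every other orbit of $\langle a\rangle$ has size dividing $d$, hence at most $d$. Therefore the number of nonzero orbits satisfies $r(a)-1\ge (n-1)/d$, with equality if and only if every nonzero orbit has size exactly $d$. By Definition~\ref{def:semiregular}, that is precisely the statement that $a$ is semiregular on $G\setminus\{0\}$.

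The main obstacle is making the degree comparison rigorous. Two points need care. First, the leading (degree $r(a)-1$) coefficient of $L_{G,a}$ must be nonzero on the residue class we pick. This holds because, for a rational polytope, the top coefficient of every constituent equals the normalized volume, which is positive. Alternatively, one can bound directly: a set of $r(a)$ orbit multiplicities with $\sum \ell(O)c_O=k$ and a zero-sum condition of index at most $n$ has $\gg k^{r(a)-1}$ solutions, e.g.\ by choosing each $c_O$ to be a multiple of $n$ and adjusting only the $\{0\}$-orbit. Second, we must use only the $k$ coprime to $n$ when passing to a residue class; this is harmless since we pigeonhole among those $k$. Only the inequality $r(a)-1\le(n-1)/d$ is actually needed to finish, and the lower bound $|\M_a(G,k)|\gg k^{r(a)-1}$ together with $\Cat_{n,k}(\zeta_d)\ll k^{(n-1)/d}$ gives it directly. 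This avoids quasipolynomial theory entirely, and I would present it that way.
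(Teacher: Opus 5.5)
Your argument is correct, and the version you say you would present takes a genuinely different, more elementary route than the paper.

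The paper proves the exact equality $r(a)-1=(n-1)/d$. It first chooses a residue class, modulo a common period, that contains infinitely many admissible $k$ and on which both $|\M_a(G,k)|$ and $\Cat_{n,k}(\zeta_d)$ are polynomials. It then invokes the Ehrhart-theoretic fact that the corresponding constituent for $\calP_{G,a}$ has degree $r(a)-1$ with positive leading coefficient. Only after that does it use $n\le 1+d(r(a)-1)$ to force every nonzero orbit to have size $d$.

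You instead observe that this orbit-size bound already gives $r(a)-1\ge (n-1)/d$ unconditionally, so only the reverse inequality is needed. You get it by comparing two explicit bounds:
\begin{itemize}
\item the lower bound $|\M_a(G,k)|\ge\binom{\lfloor k/(nd)\rfloor+r(a)-1}{r(a)-1}$, obtained by putting multiples of $n$ on the nonzero orbits (which kills the sum since $nG=0$) and letting the orbit $\{0\}$, with $\ell=1$ and $\sigma=0$, absorb the remaining length;
\item the upper bound $\Cat_{n,k}(\zeta_d)\ll k^{(n-1)/d}$ from Lemma~\ref{lem:q-lucas-catalan}.
\end{itemize}

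This avoids both polynomiality on residue classes and the constituent-wise leading-coefficient statement. That statement needs some care because $\calP_{G,a}$ is not full-dimensional in $\R^{r(a)}$; it holds here because the sequence of $k$ zeros gives a lattice point in every dilate. Your version also shows the hypothesis can be weakened: it suffices that $|\M_a(G,k)|\le\Cat_{n,k}(\zeta_d)$ for infinitely many $k$ coprime to $n$, or even that $|\M_a(G,k)|=O(k^{(n-1)/d})$ along some sequence $k\to\infty$. The paper's route mainly buys consistency with its Ehrhart-theoretic framing.

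One small point about your discarded first sketch. The claim that both sides are polynomials on residue classes modulo $\mathrm{lcm}(d,\exp(G))$ is true here only because $d\mid n-1$ makes all orbit lengths coprime to $\exp(G)$. In general the quasi-period can be larger: for the coordinate swap on $C_2\oplus C_2$ it is $4$.
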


\begin{proof}
By Proposition~\ref{prop:PGa}, $|\M_a(G,k)|$ is the Ehrhart quasipolynomial of the rational simplex $\calP_{G,a}$ of dimension $r(a)-1$. On any fixed residue class modulo $d$, Lemma~\ref{lem:q-lucas-catalan} shows that $k\mapsto \Cat_{n,k}(\zeta_d)$ is a polynomial of degree $(n-1)/d$. Note that, on every residue class on which $m\calP_{G,a}\cap\Z^{r(a)}$
 is nonempty for infinitely many $m$, the corresponding Ehrhart quasipolynomial has degree $\dim \calP_{G,a}=r(a)-1$ and positive leading coefficient.
Since the equality holds for infinitely many coprime $k$, there is a residue class modulo a common period of the Ehrhart quasipolynomial and $d$ containing infinitely many such $k$. On this residue class, both sides are polynomials and agree at infinitely many integers; hence their degrees agree:
\[
        r(a)-1=\frac{n-1}{d}.
\]
Every nonzero orbit of $a$ has size dividing $d$, hence at most $d$. Since $0$ is fixed,
\[
        n=1+\sum_{O\ne \{0\}}|O|
        \le 1+d(r(a)-1)
        =1+d\cdot \frac{n-1}{d}=n.
\]
Equality throughout forces every nonzero orbit to have size exactly $d$, so $a$ is semiregular on $G\setminus\{0\}$.
\end{proof}

\begin{theorem}\label{thm:odd-inversion}
Let $G$ be a finite abelian group of odd order $n$, and let $m\in \N$ with $(n,m)=1$. Let $\tau\in \Aut(G)$ be inversion, $\tau(g)=-g$. Then the triple
\[
        \bigl(m\calP_G\cap \Z^n,\langle \widetilde\tau\rangle,\Cat_{n,m}(q)\bigr)
\]
exhibits the cyclic sieving phenomenon.
\end{theorem}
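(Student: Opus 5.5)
This is the special case $c=\tau$, $d=2$ of Theorem~\ref{thm:semiregular-CSP}. So the plan is to verify its hypotheses and, for transparency, sketch the fixed-point count directly.

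First, $\tau$ has order exactly $2$ whenever $n>1$; if $n=1$ there is nothing to prove. Next we show $\tau$ is semiregular on $G\setminus\{0\}$, i.e. $\Fix_G(\tau)=\{0\}$. If $-g=g$ then $2g=0$. Since $n$ is odd, $2$ is invertible modulo the order of $g$, so $g=0$. Hence every nonzero orbit of $\langle\tau\rangle$ is a pair $\{g,-g\}$ of size $2$, which is Definition~\ref{def:semiregular} with $d=2$. Since $n$ is odd, $2\mid n-1$, which is the condition used in the proof of Theorem~\ref{thm:semiregular-CSP}. Also $(n,m)=1$ by hypothesis. Theorem~\ref{thm:semiregular-CSP} then gives the cyclic sieving phenomenon for $(m\calP_G\cap\Z^n,\langle\widetilde\tau\rangle,\Cat_{n,m}(q))$.

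For completeness, the argument can be run directly as follows. For $j$ even, $\widetilde\tau^{\,j}$ is the identity. The required equality is then $|\M(G,m)|=\Cat_{n,m}(1)=\Cat_{n,m}$, which is \eqref{eq:coprime-reciprocity}. For $j$ odd, Proposition~\ref{prop:equi-zero-sum} identifies the fixed lattice points with $\M_\tau(G,m)$. Proposition~\ref{prop:fixed-point-free}, applied with common orbit size $2$, gives
\[
|\M_\tau(G,m)|=\binom{(n-1)/2+\lfloor m/2\rfloor}{(n-1)/2}.
\]
Lemma~\ref{lem:q-lucas-catalan}, with $k=2$ and $\zeta=-1$, gives the same value for $\Cat_{n,m}(-1)$.

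The only real check is the fixed-point-freeness of inversion, which uses that $n$ is odd. Everything else is already contained in the cited results.
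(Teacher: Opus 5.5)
Your proposal is correct and is the same argument the paper gives: inversion on a group of odd order fixes only $0$, so it is semiregular of order $2$ on $G\setminus\{0\}$, and Theorem~\ref{thm:semiregular-CSP} applies. Your direct check via Proposition~\ref{prop:fixed-point-free} and Lemma~\ref{lem:q-lucas-catalan} with $k=2$, $\zeta=-1$ simply unpacks that theorem's proof in the case $d=2$.
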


\begin{proof}
If $|G|$ is odd, inversion has no nonzero fixed point and has order $2$. Thus $\tau$ is semiregular on $G\setminus\{0\}$, and the claim follows from Theorem~\ref{thm:semiregular-CSP}.
\end{proof}

The inversion involution can be described explicitly for every finite abelian group. Let
\[
        G[2]:=\{g\in G:2g=0\}
\]
be the $2$-torsion subgroup of $G$.

\begin{proposition}\label{prop:inversion}
Let $G$ be a finite abelian group and $\tau\in\Aut(G)$ with $\tau(g)=-g$ for every $g\in G$. Then
\begin{equation}\label{eq:inversion-F}
        \calF_{G,\tau}(q,t)
        =\calF_{G[2]}(q,t)\left(\frac{1+(q^2-1)t^2}{1-t^2}\right)^{(n-|G[2]|)/2}.
\end{equation}
In particular,
\begin{equation}\label{eq:inversion-E}
        \calE_{G,\tau}(t)=\Ehr_{\calP_{G[2]}}(t)(1-t^2)^{-(n-|G[2]|)/2}.
\end{equation}
\end{proposition}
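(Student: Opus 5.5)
I will prove \eqref{eq:inversion-F} directly from the orbit description used in the proof of Proposition~\ref{prop:equiv-support}, rather than from the character-sum formula \eqref{eq:F-G-a}.

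\textbf{Orbit structure.} The orbits of $\tau$ on $G$ are of two kinds. There are the singletons $\{g\}$ with $g\in G[2]$, for which $\sigma(\{g\})=g$ and $\ell=1$. There are also the pairs $\{g,-g\}$ with $g\notin G[2]$, for which $\sigma(\{g,-g\})=0$ and $\ell=2$. There are exactly $(n-|G[2]|)/2$ pairs.

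\textbf{Splitting the generating function.} As in that proof, a $\tau$-invariant sequence is a tuple $(c_O)_{O\in\calO(\tau)}\in\Nzero^{\calO(\tau)}$. It carries the weight
\[
q^{\sum_{c_O>0}\ell(O)}\,t^{\sum_O\ell(O)c_O},
\]
and the zero-sum condition reads $\sum_O c_O\sigma(O)=0$. The pair orbits contribute $0$ to this sum, so the condition reduces to $\sum_{g\in G[2]}c_{\{g\}}\,g=0$ in $G[2]$. That condition involves only the singleton coordinates.

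Hence the sum over tuples factors. The singleton part is exactly the sum over zero-sum sequences over the subgroup $G[2]$, weighted by $q^{|\supp|}t^{\text{length}}$, which is $\calF_{G[2]}(q,t)$. Each pair orbit independently contributes the free factor
\[
\sum_{c\ge0}q^{2\cdot\mathbf 1_{\{c>0\}}}t^{2c}=\frac{1+(q^2-1)t^2}{1-t^2}.
\]
Multiplying these factors gives \eqref{eq:inversion-F}.

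\textbf{Specializing.} Setting $q=1$ turns each pair factor into $(1-t^2)^{-1}$. It also gives $\calF_{G[2]}(1,t)=\sum_m|\M(G[2],m)|t^m=\Ehr_{\calP_{G[2]}}(t)$, by Proposition~\ref{prop:polytope-interpretation} applied to $G[2]$. This yields \eqref{eq:inversion-E}.

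\textbf{Main care point.} The only delicate step is the factorization. It needs the zero-sum constraint to decouple, and this holds because $\sigma(\{g,-g\})=g+(-g)=0$. One must also observe that the $\tau$-fixed points are exactly $G[2]$, and that this is a subgroup, so the singleton part really is the zero-sum generating function of the group $G[2]$. The degenerate case $G[2]=\{0\}$ (odd order) is consistent with this: then $\calF_{G[2]}(q,t)=\bigl(1+(q-1)t\bigr)/(1-t)$.
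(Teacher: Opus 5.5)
Your proposal is correct and follows essentially the same route as the paper. Both split a $\tau$-invariant sequence into its $G[2]$-part and free multiplicities on the pair orbits $\{g,-g\}$, use $\sigma(\{g,-g\})=0$ to decouple the zero-sum condition, and then set $q=1$; your orbit coordinates $(c_O)$ are just a notational variant of the paper's decomposition $S=S_{G[2]}\cdot\prod_i(g_i(-g_i))^{x_i}$.
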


\begin{proof}
The $\tau$-fixed elements are exactly the elements of $G[2]$. Every element outside $G[2]$ belongs to a two-element orbit $\{g,-g\}$. Hence a $\tau$-invariant sequence decomposes uniquely as
\[
        S=S_{G[2]}\cdot \prod_i (g_i(-g_i))^{x_i},
        \qquad x_i\in \Nzero,
\]
where the product runs over representatives of the two-element orbits in $G\setminus G[2]$. Each such pair contributes zero to the sum, length $2$, and, if selected, support size $2$. Thus $S$ is zero-sum if and only if $S_{G[2]}$ is a zero-sum sequence over $G[2]$. Summing independently over the pair multiplicities gives the factor in \eqref{eq:inversion-F}; setting $q=1$ gives \eqref{eq:inversion-E}.
\end{proof}

\begin{corollary}\label{cor:cyclic-inversion}
Let $C_n$ be a cyclic group of order $n$, let $m\in \N$ with $(n,m)=1$, and let $\tau\in \Aut(C_n)$ be the inversion automorphism $\tau(g)=-g$. Then the triple
\[
        \bigl(m\calP_{C_n}\cap \Z^n,\langle \widetilde\tau\rangle,\Cat_{n,m}(q)\bigr)
\]
exhibits the cyclic sieving phenomenon.
\end{corollary}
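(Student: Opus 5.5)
The plan is to split on the parity of $n$. If $n$ is odd, the statement is exactly Theorem~\ref{thm:odd-inversion} applied to $G=C_n$, so there is nothing to prove. Assume therefore that $n=2r$ is even. Since $(n,m)=1$, $m$ is odd, and we write $m=2s+1$. The cyclic group $\langle\widetilde\tau\rangle$ has order $2$, so $\zeta_2=-1$ and only two group elements need to be checked.

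For the identity, the required equality is $|m\calP_{C_n}\cap\Z^n|=\Cat_{n,m}(1)$. This follows from Proposition~\ref{prop:polytope-interpretation} together with \eqref{eq:coprime-reciprocity}, which says $|\M(C_n,m)|=\Cat_{n,m}$ when $(n,m)=1$.

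For $\widetilde\tau$, Proposition~\ref{prop:equi-zero-sum} identifies the fixed-point count with $|\M_\tau(C_n,m)|$, and we compute it from \eqref{eq:inversion-E}. In $C_n$ with $n$ even, the $2$-torsion subgroup is $G[2]\cong C_2$. For the Ehrhart series of $\calP_{C_2}$, a zero-sum sequence over $C_2$ of length $k$ is determined by an even multiplicity of the nonzero element. Hence there are $\lfloor k/2\rfloor+1$ of them, and $\Ehr_{\calP_{C_2}}(t)=\frac{1}{(1-t)(1-t^2)}$; this also agrees with \eqref{eq:ehrhart-series}. Substituting into \eqref{eq:inversion-E} with $(n-2)/2=r-1$ gives
\[
\calE_{C_n,\tau}(t)=\frac{1}{(1-t)(1-t^2)^{r}}.
\]
Extracting the coefficient of $t^{m}$ as in the proof of Proposition~\ref{prop:fixed-point-free} then yields
\[
|\M_\tau(C_n,m)|=\sum_{j=0}^{s}\binom{r-1+j}{r-1}=\binom{r+s}{r}.
\]

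Finally, Lemma~\ref{lem:q-minus-one-catalan} gives $\Cat_{n,m}(-1)=\binom{r+s}{r}$ for $n=2r$ and $m=2s+1$, so the two sides agree. The only step needing care is the even case, since there $\tau$ is not semiregular (it fixes $n/2$) and Theorem~\ref{thm:semiregular-CSP} does not apply. The direct computation above replaces that theorem, and the degenerate case $n=2$ (where $r=1$) is covered by the same formula.
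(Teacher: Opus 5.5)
Your proposal is correct and matches the paper's proof essentially step for step: the odd case follows from Theorem~\ref{thm:odd-inversion}; the even case uses $C_n[2]\cong C_2$, Proposition~\ref{prop:inversion} to get $\calE_{C_n,\tau}(t)=\frac{1}{(1-t)(1-t^2)^{r}}$, the hockey-stick coefficient extraction, and Lemma~\ref{lem:q-minus-one-catalan}, with the identity element handled by \eqref{eq:coprime-reciprocity}. One small correction: for $n=2$ inversion is the identity, so $\langle\widetilde\tau\rangle$ is trivial rather than of order $2$, and only the check $|m\calP_{C_2}\cap\Z^2|=\Cat_{2,m}(1)$ is needed; your evaluation at $-1$ agrees there but is superfluous.
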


\begin{proof}
If $n$ is odd, this is Theorem~\ref{thm:odd-inversion}. Assume that $n$ is even. Then $C_n[2]\cong C_2$, and
\[
        \Ehr_{\calP_{C_2}}(t)=\sum_{\ell\ge 0}|\M(C_2,\ell)|t^\ell
        =\frac1{(1-t)(1-t^2)}.
\]
By Proposition~\ref{prop:inversion},
\[
        \calE_{C_n,\tau}(t)=\frac1{(1-t)(1-t^2)^{n/2}}.
\]
Since $(n,m)=1$, the integer $m$ is odd. Write $n=2r$ and $m=2s+1$. Then
\[
        |\M_\tau(C_n,m)|
        =[t^m]\frac1{(1-t)(1-t^2)^r}
        =\sum_{j=0}^s\binom{r+j-1}{r-1}
        =\binom{r+s}{r}.
\]
By Lemma~\ref{lem:q-minus-one-catalan}, this equals $\Cat_{n,m}(-1)$. This verifies the nontrivial fixed-point condition for the order-two action, and the identity element condition is $\Cat_{n,m}(1)=|m\calP_{C_n}\cap \Z^n|$.
\end{proof}

\begin{corollary}\label{cor:prime-CSP}
Let $p$ be an odd prime, and let $m\in \N$ with $(p,m)=1$. Then the triple
\[
        \bigl(m\calP_{C_p}\cap \Z^p,\Aut(C_p),\Cat_{p,m}(q)\bigr)
\]
exhibits the cyclic sieving phenomenon.
\end{corollary}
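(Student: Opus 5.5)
The plan is to reduce Corollary~\ref{cor:prime-CSP} to Theorem~\ref{thm:semiregular-CSP}. The group $\Aut(C_p)\cong (\Z/p\Z)^\times$ is cyclic of order $p-1$; we fix a generator $c$, given by multiplication by a primitive root $g$ modulo $p$. The cyclic group acting on $m\calP_{C_p}\cap\Z^p$ is then $\langle \widetilde c\rangle$, and $\widetilde c$ has order $d=p-1$. This holds because $a\mapsto \widetilde a=TaT^{-1}$ is an injective homomorphism, injective since the coordinate permutation action of $\Aut(C_p)$ on $\R^{C_p}$ is faithful. Accordingly, we read the triple in the statement as $\bigl(m\calP_{C_p}\cap\Z^p,\langle\widetilde c\rangle,\Cat_{p,m}(q)\bigr)$, with $\zeta_{p-1}^j$ evaluated at $\widetilde c^{\,j}$.

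Next we verify the hypotheses of Theorem~\ref{thm:semiregular-CSP}. The condition $(p,m)=1$ is given. For semiregularity, take $1\le j<p-1$. Then $c^j$ is multiplication by $g^j\not\equiv 1\pmod p$. If $g^j x\equiv x$, then $(g^j-1)x\equiv 0$, and since $p$ is prime and $g^j-1$ is a unit, $x=0$. Hence $\Fix_{C_p}(c^j)=\{0\}$ for all $1\le j<d$, which is exactly Definition~\ref{def:semiregular}. Equivalently, $\Aut(C_p)$ acts simply transitively on $C_p\setminus\{0\}$, so the only nonzero orbit has size $p-1=d$.

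Theorem~\ref{thm:semiregular-CSP} then gives the cyclic sieving phenomenon directly. As a consistency check, the divisibility $d\mid(n-1)$ used inside that proof (via Lemma~\ref{lem:q-lucas-catalan}) holds trivially here, since $d=n-1$. We can also test a fixed-point count explicitly. For $j$ with $\ord(c^j)=r$, Proposition~\ref{prop:fixed-point-free} gives $\binom{(p-1)/r+\lfloor m/r\rfloor}{(p-1)/r}$ invariant points, matching $\Cat_{p,m}(\zeta_{p-1}^j)$.

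The main obstacle is interpretive rather than computational: the statement writes $\Aut(C_p)$ in place of a cyclic group $\langle\widetilde c\rangle$. We must therefore note that $\Aut(C_p)$ is cyclic, that its action transported through $T$ is faithful, and that the sieving holds for every generator choice. Independence of the generator follows because changing generators replaces $\zeta$ by another primitive $(p-1)$st root of unity. The fixed-point formula depends only on the order $r$ of the element, and so does $\Cat_{p,m}(\zeta)$ by Lemma~\ref{lem:q-lucas-catalan}.
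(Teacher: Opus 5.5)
Your proposal is correct and matches the paper's proof: $\Aut(C_p)$ is cyclic, a generator (multiplication by a primitive root) has no nonzero fixed points in any nontrivial power, so it is semiregular on $C_p\setminus\{0\}$ and Theorem~\ref{thm:semiregular-CSP} applies directly. One small caveat on your side check: Proposition~\ref{prop:fixed-point-free} and Lemma~\ref{lem:q-lucas-catalan} only apply for $r>1$, since the identity case is the separate count $\Cat_{p,m}(1)=|m\calP_{C_p}\cap\Z^p|$.
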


\begin{proof}
The group $\Aut(C_p)\cong C_{p-1}$ is cyclic. Choose a generator $c$. Since $p$ is prime, every nontrivial power of $c$ acts without nonzero fixed points on $C_p$. Thus $c$ is semiregular on $C_p\setminus\{0\}$, and the claim follows from Theorem~\ref{thm:semiregular-CSP}.
\end{proof}

\begin{corollary}\label{cor:vector-space-CSP}
Let $G=(\Z/p\Z)^r$ with $p$ prime, let $\lambda\in (\Z/p\Z)^\times$ have multiplicative order $d$, and let $c$ be the automorphism $x\mapsto \lambda x$ of $G$. If $(p,m)=1$, then
\[
        \bigl(m\calP_G\cap \Z^{p^r},\langle \widetilde c\rangle,\Cat_{p^r,m}(q)\bigr)
\]
exhibits the cyclic sieving phenomenon.
\end{corollary}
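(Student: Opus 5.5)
The plan is to reduce Corollary~\ref{cor:vector-space-CSP} to Theorem~\ref{thm:semiregular-CSP}, with $n=p^r$. Two things must be checked: the coprimality hypothesis and the semiregularity of $c$ on $G\setminus\{0\}$.

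For coprimality, note that $(p,m)=1$ forces $(p^r,m)=1$, which is exactly the hypothesis $(n,m)=1$ required in Theorem~\ref{thm:semiregular-CSP}.

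Next, I will identify the order of $c$. We have $c^j(x)=\lambda^j x$. Since $G$ is a nonzero $\mathbb F_p$-vector space (the case $r\ge 1$), $c^j$ is the identity exactly when $\lambda^j=1$ in $\mathbb F_p$. Hence $c$ has order $d$, the multiplicative order of $\lambda$. This identifies the cyclic group $\langle\widetilde c\rangle$ with $C_d$ and matches the $d$ appearing in the theorem.

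For semiregularity, take $1\le j<d$, so that $\lambda^j\ne 1$. If $x\in\Fix_G(c^j)$, then $(\lambda^j-1)x=0$ in $G$. Since $\lambda^j-1$ is a nonzero element of the field $\mathbb F_p$, it is invertible, and multiplying by its inverse gives $x=0$. Thus $\Fix_G(c^j)=\{0\}$ for every $1\le j<d$, which is Definition~\ref{def:semiregular}.

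Theorem~\ref{thm:semiregular-CSP} now applies directly to the triple $\bigl(m\calP_G\cap\Z^{p^r},\langle\widetilde c\rangle,\Cat_{p^r,m}(q)\bigr)$ and gives the corollary.

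There is no real obstacle here. The only point needing care is that the exponent of $G$ is $p$, so that multiplication by an element of $(\Z/p\Z)^\times$ is a well-defined automorphism and the field argument applies. As a consistency check, semiregularity implies $d\mid p^r-1$; this also follows from $d\mid p-1$, which is the divisibility used through Lemma~\ref{lem:q-lucas-catalan}.
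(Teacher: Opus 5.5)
Your proposal is correct and matches the paper's proof: both reduce to Theorem~\ref{thm:semiregular-CSP} by observing that for $1\le j<d$ the scalar $\lambda^j-1$ is nonzero, hence invertible, in $\mathbb F_p$, so $\Fix_G(c^j)=\{0\}$. Your additional checks that $(p^r,m)=1$ and that $c$, and hence $\widetilde c$, has order exactly $d$ are left implicit in the paper but are the right things to verify.
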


\begin{proof}
For $1\le j<d$, the scalar $\lambda^j-1$ is nonzero in $\mathbb F_p$, hence multiplication by $\lambda^j-1$ is bijective on the vector space $G$. Therefore $\Fix_G(c^j)=\{0\}$, so $c$ is semiregular on $G\setminus\{0\}$. The claim follows from Theorem~\ref{thm:semiregular-CSP}.
\end{proof}

\section{Concluding remarks}\label{sec:conclusion}

This paper places the enumeration of zero-sum sequences over finite abelian groups into the framework of rational Ehrhart theory. The zero-sum polytope $\calP_G$ provides a geometric model for the counting function $|\M(G,m)|$, while the faces of $\calP_G$ record the support structure of zero-sum sequences. This viewpoint explains the reciprocity formula for $|\M(G,-m)|$ as an instance of Ehrhart--Macdonald reciprocity and leads to face-stratified rigidity phenomena for lattice-point counts.  Several further questions arise naturally.

First, it would be very interesting to generalize the geometric model $\mathcal P_G$ to the non-abelian setting. There are two possible routes. One is in additive combinatorics, the natural analogue of zero-sum sequence in the non-abelian setting is the product-one sequence (see \cite{GeroldingerOh,HanZhang2019}). The problem is that the study of product-one sequences does not seem to have been developed in a comparable enumerative form. The second route is through polynomial invariant theory, the study of polynomial invariants (in particular, on the Noether number) is closely related to zero-sum sequences (see \cite{CziszterDomokosGeroldinger,HanZhang2019}). Moreover, \cite[Section 4]{LiZhang} showed that the counting formula of certain polynomial invariants for general finite group has the same form as \eqref{eq:MGFormula-intro}.


Second, it is important and appealing to consider combinatorial interpretations for Theorems \ref{thm:main-rigidity} and \ref{thm:main-reciprocal-rigidity} . More precisely, constructing an explicit correspondence between $\mathsf M(G,m)$ (resp. $\mathsf M(G,|H|)$) and $\mathsf M(H,m)$ (resp. $\mathsf M(H,|G|)$) which preserve the support statistic, in the non-coprime setting.

Third, in Section 4, the statistic $\operatorname{supp}$ is crucial in our proof of the rigidity theorems. In fact, the statistic $\operatorname{supp}$ has been an important and widely used statistic in additive combinatorics. It would be interesting to see if there are some other statistics in additive combinatorics that satisfy analogues of Proposition \ref{prop:reciprocal-support}.


Finally, the equivariant theory developed here suggests a broader representation-theoretic refinement. The natural $\Aut(G)$-action on $\calP_G$ gives permutation representations on the sets $m\calP_G\cap \Z^n$. It would be interesting to study the corresponding equivariant $h^*$-series in the sense of Stapledon \cite{Stapledon2011} and to determine whether positivity, unimodality, or representation-stability phenomena occur for special families of groups.

\appendix

\section{Proof of Lemma \ref{lem:rigidity-order-distribution}}\label{app:comparison}

In this appendix, we provide a proof of Lemma \ref{lem:rigidity-order-distribution}.  The proof strategy is similar to (and simpler than) that of \cite[Theorem 3]{LiZhang}; we provide all the details below for the convenience of the readers.

Let $G$ be a finite abelian group. Assume that $G\cong C_{n_1}\oplus\cdots\oplus C_{n_r}$, where  $1<n_1|\cdots|n_r\in\mathbb{N}$.
Moreover, this sequence $(n_1,\dots,n_r)$ is uniquely determined by $G$. For any element $g \in G$, we write $\mathrm{ord}(g)$ for its order. Given a prime $p$, we denote by $\mathrm{Syl}_p(G)$ the Sylow $p$-subgroup of $G$.

We provide some auxiliary lemmas, which will be repeatedly used in the subsequent proofs.

\begin{lemma}{\rm(\cite{LiZhang} Lemma 5)}\label{lemma:Binom_ineq}
Let $m,n,a,b\geq 2$ be integers and $a,b|(m,n).$
\begin{enumerate}
   \item [{\rm(i)}] If $b>a$,  we have
$$
  \binom{\frac{m+n}{a}}{\frac{m}{a},\frac{n}{a}}\Big/ \binom{\frac{m+n}{b}}{\frac{m}{b},\frac{n}{b}}\geq (1+\frac{m}{n})^{n(\frac{1}{a}-\frac{1}{b})}(1+\frac{a}{b}\frac{n}{m})^{m(\frac{1}{a}-\frac{1}{b})}.
$$
Consequently, $\binom{\frac{m+n}{a}}{\frac{m}{a},\frac{n}{a}}> \binom{\frac{m+n}{b}}{\frac{m}{b},\frac{n}{b}}.$

    \item  [{\rm(ii)}] If $b\geq 2a$,  we   have $$
 a \binom{\frac{m+n}{a}}{\frac{m}{a},\frac{n}{a}}> \max\{m,n\}\binom{\frac{m+n}{b}}{\frac{m}{b},\frac{n}{b}}.
$$

\end{enumerate}
\end{lemma}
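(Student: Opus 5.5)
The plan is to interpolate the multinomial coefficients in the dilation parameter and integrate a logarithmic derivative. Put $N=m+n$ and, for real $t>0$,
\[
  g(t)=\log\Gamma(Nt+1)-\log\Gamma(mt+1)-\log\Gamma(nt+1),
\]
so that $g(1/s)=\log\binom{N/s}{m/s,\,n/s}$ for $s\in\{a,b\}$. Part (i) then asks for a lower bound on $g(1/a)-g(1/b)=\int_{1/b}^{1/a}g'(t)\,dt$, where
\[
  g'(t)=n\bigl[\psi(Nt+1)-\psi(nt+1)\bigr]+m\bigl[\psi(Nt+1)-\psi(mt+1)\bigr]
\]
and $\psi$ is the digamma function. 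The interval has length $\tfrac1a-\tfrac1b$, which is exactly the exponent appearing in (i). So it suffices to prove the pointwise bound
\[
  g'(t)\ \ge\ n\log\Bigl(1+\frac mn\Bigr)+m\log\Bigl(1+\frac ab\frac nm\Bigr)\qquad\text{for }t\in\Bigl[\frac1b,\frac1a\Bigr].
\]
The strict inequality in (i) follows at once, because both bases exceed $1$ and $\tfrac1a-\tfrac1b>0$.

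For the pointwise bound I would treat the two brackets separately, using $\psi(y+1)-\psi(x+1)=\sum_{k\ge1}\bigl(\tfrac1{x+k}-\tfrac1{y+k}\bigr)$ and comparing with integrals. For the $m$-bracket I would use $t\le 1/a$ and $t\ge 1/b$ together with the monotonicity of $\psi$. The aim is to show
\[
  \psi(Nt+1)-\psi(mt+1)\ \ge\ \log\frac{Nt+1}{mt+1}\ \ge\ \log\Bigl(1+\frac{nt}{mt+1}\Bigr)\ \ge\ \log\Bigl(1+\frac ab\frac nm\Bigr).
\]
This is where the factor $a/b$ absorbs the lost ``$+1$'': the last step needs $\tfrac{nt}{mt+1}\ge \tfrac{an}{bm}$ on the interval, and I would check it using $mt\ge m/b\ge1$.

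The $n$-bracket is the main obstacle. The naive integral comparison only gives $\log\frac{Nt+1}{nt+1}$, which falls short of $\log(N/n)$. I would first try the sharper estimate $\psi(x+1)\approx\log(x+\tfrac12)$, with explicit error bounds, and let the surplus from the $m$-bracket compensate. If that fails, I would fall back on a discrete argument. Write
\[
  \binom{N/s}{m/s,\,n/s}=\prod_{i=1}^{n/s}\Bigl(1+\frac{m}{s i}\Bigr),
\]
and compare the products for $s=a$ and $s=b$ factor by factor. Each factor with $i\le n/a$ is at least $1+m/n$, and there are $n(\tfrac1a-\tfrac1b)$ extra factors, which produces $(1+m/n)^{n(1/a-1/b)}$. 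The symmetric product in $m$ then supplies the second factor.

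Part (ii) I would derive from (i). When $b\ge 2a$ we have $\tfrac1a-\tfrac1b\ge\tfrac1{2a}$, so the ratio is at least
\[
  \Bigl[(1+\tfrac mn)^n\,(1+\tfrac ab\tfrac nm)^m\Bigr]^{1/(2a)}.
\]
Assume by symmetry that $m\ge n$. Since $a,b$ divide $\gcd(m,n)$, we have $n\ge b\ge 2a$, and the bracket is at least $(1+m/n)^n\ge 2^{n-1}(m/n)\cdot n$, roughly. It remains to check the elementary inequality $\bigl[(1+m/n)^n\bigr]^{1/(2a)}>m/a$ for $n\ge 2a\ge 2$. This amounts to an exponential in $n/a$ beating a linear function, with a short verification for the smallest cases $n/a\in\{2,3\}$.
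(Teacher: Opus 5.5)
\paragraph{Part (i).} The pointwise bound you reduce to is false, so no sharpening of the digamma estimates can complete the analytic route. Take $m=n=4$, $a=2$, $b=4$. The ratio is $\binom{4}{2,2}/\binom{2}{1,1}=3$ and the right-hand side of (i) is $2\cdot\frac32=3$, so (i) holds with equality here. Explicitly, $g'(\frac14)=8\bigl(\psi(3)-\psi(2)\bigr)=4<4\log 3$, which is your required constant. The inequality is true only after integration, that is, discretely.

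Your fallback correctly extracts $(1+\frac mn)^{n\eta}$, with $\eta=\frac1a-\frac1b$, from the $n\eta$ extra factors. It does not produce the second factor, however. Comparing the $m$-indexed products gives a separate lower bound $(1+\frac nm)^{m\eta}$ for the same ratio, and two lower bounds for one quantity cannot be multiplied. The missing step is to keep the leftover product rather than bounding it by $1$:
\[
\prod_{i=1}^{n/b}\frac{1+m/(ai)}{1+m/(bi)}=\prod_{i=1}^{n/b}\frac{m/a+i}{m/b+i}=\frac{(m/a+n/b)!\,(m/b)!}{(m/a)!\,((m+n)/b)!}=\prod_{j=1}^{m\eta}\frac{(m+n)/b+j}{m/b+j}.
\]
Each factor on the right is at least $1+\frac{n/b}{m/a}=1+\frac ab\frac nm$, and that is exactly where the factor $a/b$ comes from.

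\paragraph{Part (ii).} Part (ii) cannot be derived from (i) at all, so this is not merely a small-case check. At $n/a=2$ your elementary inequality reads $1+\frac{m}{2a}>\frac ma$, which is false for $m\ge n=2a$. Even the full bound of (i) is too weak in this case. If $n=b=2a\le m$, the ratio equals exactly $\frac ma+1$, whereas the right side of (i) is $(1+\frac m{2a})(1+\frac am)^{m/(2a)}\sim\sqrt e\,\frac m{2a}<\frac ma$. For instance, with $a=2$, $b=n=4$, $m=40$, the ratio is $21$, but (i) gives only $11\cdot 1.05^{10}\approx 17.9<20$; the $m\leftrightarrow n$ version gives about $15.6$.

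You should prove (ii) directly from the exact factorization $R=P_1P_2$, assuming $m\ge n$, where $P_1=\prod_{j=1}^{n\eta}\frac{m/a+n/b+j}{n/b+j}$ and $P_2=\prod_{i=1}^{n/b}\frac{m/a+i}{m/b+i}$.
\begin{itemize}
\item If $n/b=1$, then $P_1\ge\frac{m/a+2}{2}$. Since $m/b\le\frac{m}{2a}$, also $P_2\ge\frac{2(m/a+1)}{m/a+2}$, so $R\ge\frac ma+1$.
\item If $c=n/b\ge2$, then $n\eta\ge c$ and $m/a\ge 2c$. Hence $P_1\ge\bigl(1+\frac{m}{2ac}\bigr)^{c}\ge\frac ma$, using $(1+u)^c\ge 2cu$ for $u\ge1$, $c\ge2$. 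Since $P_2>1$, this gives $R>\frac ma$.
\end{itemize}
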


\begin{lemma}\label{lemma:Impartant_estimation}
Let $n=p^\alpha q^\beta n', m=p^\gamma q^\delta m'$ be positive integers, where $p$ and $q$ are distinct primes, $(n',pq)=(m',pq)=1$, and $\alpha, \beta, \gamma, \delta\in\mathbb N$. Suppose that $a=p^s$, $b=q^t$ ($s,t\geq 1$) satisfy $b<2a$ and  $a,b\mid (n,m)$. Denote
$$\Delta_{n,m}(a,b):=p^{\alpha+\gamma-2s-1}q^{\delta-t}n'm'.$$
\begin{enumerate}
   \item [{\rm(i)}]  If
  $n(\frac{1}{a}-\frac{1}{b})\geq 3$ and    $\{a,b\}\neq \{2,3\},$
  then
$$
     a\binom{\frac{n+m}{a}}{\frac{n}{a},\frac{m}{a}}
     \Big/\binom{\frac{n+m}{b}}{\frac{n}{b},\frac{m}{b}}>  2\Delta_{n,m}(a,b) q^\beta.
$$
Consequently, if $\Delta_{n,m}(a,b)\geq 1,$ then we have
\begin{equation}\label{eq:Bino_a_2b}
     a\binom{\frac{n+m}{a}}{\frac{n}{a},\frac{m}{a}}- (q^\beta-q^t) \binom{\frac{n+m}{b}}{\frac{n}{b},\frac{m}{b}}> 2b\binom{\frac{n+m}{b}}{\frac{n}{b},\frac{m}{b}}.
     \end{equation}
   Moreover, \eqref{eq:Bino_a_2b} always holds when $\{a,b\}= \{2,3\}$.
 \item [{\rm(ii)}]
  If
  $n(\frac{1}{a}-\frac{1}{b})=2,$ and   $\{a,b\}\neq \{2,3\},$
  then
    $$
 a\binom{\frac{n+m}{a}}{\frac{n}{a},\frac{m}{a}}
 \Big/\binom{\frac{n+m}{b}}{\frac{n}{b},\frac{m}{b}}>     \Delta_{n,m}(a,b) q^\beta .
$$
Consequently, if $\Delta_{n,m}(a,b)\geq 1,$ then we have
\begin{equation}\label{eq:Bino_a_b}
     a\binom{\frac{n+m}{a}}{\frac{n}{a},\frac{m}{a}}- (q^\beta-q^t) \binom{\frac{n+m}{b}}{\frac{n}{b},\frac{m}{b}}> b\binom{\frac{n+m}{b}}{\frac{n}{b},\frac{m}{b}}.
     \end{equation}
  \end{enumerate}

\end{lemma}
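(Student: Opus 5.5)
The plan is to reduce everything to Lemma~\ref{lemma:Binom_ineq}(i) and then compare an exponential lower bound against the polynomial quantity $\Delta_{n,m}(a,b)q^\beta$. Throughout write $B(x):=\binom{\frac{n+m}{x}}{\frac nx,\frac mx}$.

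\emph{Reductions.} The hypothesis $n(\frac1a-\frac1b)\ge 2>0$ forces $b>a$. Since also $b<2a$ and $a\neq b$ (powers of distinct primes), we have $a<b<2a$. Because $a,b$ are coprime and both divide $(n,m)$, we get $ab\mid n$ and $ab\mid m$. Next rewrite the target. From $nm=p^{\alpha+\gamma}q^{\beta+\delta}n'm'$ we obtain
\[
\Delta_{n,m}(a,b)\,q^\beta=\frac{nm}{p\,a^2\,b}.
\]
So part (i) amounts to
\[
a\,\frac{B(a)}{B(b)}>\frac{2nm}{p a^2 b},
\]
and part (ii) is the same inequality without the factor $2$. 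Lemma~\ref{lemma:Binom_ineq}(i) applies because $b>a$. With $x:=n(\frac1a-\frac1b)$ and $y:=m(\frac1a-\frac1b)=\frac{m(b-a)}{ab}\ge b-a\ge1$, it gives
\[
\frac{B(a)}{B(b)}\ge\Bigl(1+\frac mn\Bigr)^{x}\Bigl(1+\frac ab\,\frac nm\Bigr)^{y}.
\]

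\emph{Main estimate (the expected obstacle).} We must show that this product, times $a$, beats $2nm/(pa^2b)$. Since $x$ and $y$ grow linearly in $n/a$ and $m/a$, the exponential side should dominate. The difficulty is making this uniform when one of $n,m$ is much larger than the other, and when $x$ is only $3$ (or $2$).

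I would split into the cases $m\ge n$ and $m<n$.
\begin{itemize}[leftmargin=2em]
\item If $m\ge n$, use $(1+\frac mn)^x\ge(1+\frac mn)^3$. This is already at least $(m/n)^2\cdot(1+\frac mn)$. Combine it with $(1+\frac{an}{bm})^y\ge 1+y\frac{an}{bm}=1+\frac{n(b-a)}{b^2}$, which supplies a factor linear in $n$. Together with $n\ge ab$ and $a\ge 2$, this should dominate $nm/(a^3 b)$.
\item If $m<n$, the roles swap. Here $y\ge \frac{m}{ab}$ and the base $1+\frac{an}{bm}$ exceeds $1+\frac12$. So the second factor is exponential in $m/(ab)$ and absorbs the factor $m$. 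Meanwhile $(1+\frac mn)^x\ge 1+x\frac mn$, or a direct use of $x\ge 3$, together with the spare factor $a/(pa^2)$, handles $n$.
\end{itemize}
In the borderline small cases, where $n/a$ or $m/a$ is tiny (for instance $n=ab$), I would check the finitely many parameter patterns directly. The case $\{a,b\}=\{2,3\}$, that is $a=2,b=3$, is excluded from the ratio bound because the estimate is too weak there. For it I would verify inequality \eqref{eq:Bino_a_2b} by hand, using explicit expressions for $B(2)$ and $B(3)$ and the growth of $B(2)/B(3)$ in $n,m$ (multiples of $6$).

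\emph{Consequences.} For \eqref{eq:Bino_a_2b}, note $q^t=b$. Then $q^\beta-q^t+2b=q^\beta+b\le 2q^\beta\le 2\Delta_{n,m}(a,b)q^\beta$ whenever $\Delta_{n,m}(a,b)\ge1$, using $b=q^t\le q^\beta$. Multiplying by $B(b)$ and applying part (i) gives the strict inequality. For \eqref{eq:Bino_a_b}, similarly $q^\beta-q^t+b=q^\beta\le\Delta_{n,m}(a,b)q^\beta$, and part (ii) finishes. Part (ii) itself is proved exactly as part (i) with $x=2$, which is why the required constant drops from $2$ to $1$.
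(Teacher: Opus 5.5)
\textbf{There is a genuine gap: the main estimate, which is the whole content of the lemma, uses bounds that are too weak in both of your branches.}

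Your preliminary steps are correct. These are: $b>a$; $ab\mid n$ and $ab\mid m$; the identity $\Delta_{n,m}(a,b)q^\beta=\frac{nm}{pa^2b}$; the use of Lemma~\ref{lemma:Binom_ineq}(i); and the deduction of \eqref{eq:Bino_a_2b} and \eqref{eq:Bino_a_b} from the ratio bounds via $q^t=b\le q^\beta$.

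\emph{The branch $m\ge n$.} You truncate the exponent $x$ to $3$ and apply only first-order Bernoulli to the second factor. This leaves a lower bound of size $a(1+\frac mn)^3\bigl(1+\frac{n(b-a)}{b^2}\bigr)$. When $m\asymp n$ this is linear in $n$, while the target $\frac{2nm}{pa^2b}$ is quadratic. (The target is not $\frac{nm}{a^3b}$; that quantity is smaller, since $p\le a$.) For instance, take $a=3$, $b=4$ and $n=m=12k$; all hypotheses hold for $k\ge3$. Your bound is $24+18k$ and the target is $\frac{8k^2}{3}$, so the bound fails for $k\ge8$.

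\emph{The branch $m<n$.} Here the roles of the two factors are reversed. The first factor satisfies $(1+\frac mn)^x\le e^{xm/n}=e^{y}$, which is bounded independently of $n$, so it cannot supply the factor $n$. Replacing the second factor by $(\frac32)^y$ throws away the only $n$-dependence available. With $a=3$, $b=4$, $m=12$, $n=12k$, your lower bound is at most $3\cdot e\cdot\frac32<13$, whereas the target $\frac{8k}{3}$ exceeds $13$ once $k\ge5$.

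\emph{The deferred small cases.} The small cases you leave to a direct check, such as $n=ab$ with $b-a\ge3$, still involve infinitely many $(a,b,m)$. They are not a finite verification.

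\emph{The missing idea.} Keep the full exponent, and let each factor produce one of $n,m$. Since $B(x)$ and $\frac{nm}{pa^2b}$ are symmetric in $n,m$, and $\max\{n,m\}(\frac1a-\frac1b)\ge3$, you may assume $n\ge m$. Put $X=n(\frac1a-\frac1b)\ge Y=m(\frac1a-\frac1b)$. Then
\[
\Bigl(1+\tfrac YX\Bigr)^X\ge 1+Y+\tfrac{X-1}{2X}Y^2>2Y
\qquad\text{and}\qquad
\Bigl(1+\tfrac ab\tfrac nm\Bigr)^Y>Y\tfrac ab\tfrac nm=X\tfrac ab.
\]
Combining these gives
\[
a\,\frac{B(a)}{B(b)}>\frac{2a^2XY}{b}=\frac{2nm(b-a)^2}{b^3}=2\Delta_{n,m}(a,b)q^\beta\cdot\frac{pa^2(b-a)^2}{b^2}.
\]
It remains to check the arithmetic inequality $pa^2(b-a)^2\ge b^2$:
\begin{itemize}
\item if $b-a\ge2$, it follows from $b<2a$;
\item if $b=a+1$, it follows from $a\ge3$;
\item it fails precisely for $(a,b)=(2,3)$, and this is the actual reason for the exclusion, which your proposal did not identify.
\end{itemize}
Part (ii) is identical, with $(1+\frac mn)^X\ge1+Y>Y$ in place of $2Y$.

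\emph{The case $\{a,b\}=\{2,3\}$.} Your hand check must also control the unbounded factor $3^\beta$. The paper does this using $3^\beta\le n/2$, together with the monotonicity of $B(2)/B(3)$ in $m$ to reduce to $m=6$.
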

\begin{proof}
Without loss of generality, we assume that $n\geq m$.

(i). Let
\[
        \eta=\frac1a-\frac1b,\qquad X=n\eta,\qquad Y=m\eta .
\]
Since \(a,b\mid (n,m)\), both \(X\) and \(Y\) are positive integers. Moreover,
as \(n\ge m\), we have \(X\ge Y\). By hypothesis,
\(X=n(1/a-1/b)\ge 3\).

By Lemma \ref{lemma:Binom_ineq}.(i), we have
\[
a\binom{(n+m)/a}{n/a,m/a}\Big/
        {\binom{(n+m)/b}{n/b,m/b}}
\ge
a\left(1+\frac mn\right)^X
 \left(1+\frac ab\frac nm\right)^Y .
\]
As \(m/n=Y/X\), it follows that
\[
\left(1+\frac mn\right)^X
=
\left(1+\frac YX\right)^X
\ge
1+Y+\frac{X-1}{2X}Y^2 .
\]
We claim that
\[
1+Y+\frac{X-1}{2X}Y^2>2Y .
\]
Indeed, if \(Y=1\), then the left-hand side is strictly larger than \(2\).
If \(Y\ge 2\), then \(X\ge Y\) gives
\[
\frac{X-1}{2X}Y^2
\ge
\frac{Y-1}{2Y}Y^2
=
\frac{Y(Y-1)}2
\ge Y-1,
\]
and the claim follows. Hence
\[
\left(1+\frac mn\right)^X>2Y.
\]
On the other hand,
\[
\left(1+\frac ab\frac nm\right)^Y
>
Y\frac ab\frac nm
=
X\frac ab .
\]
Consequently,
\[
a\binom{(n+m)/a}{n/a,m/a}\Big/
        {\binom{(n+m)/b}{n/b,m/b}}
>
a\cdot 2Y\cdot X\frac ab
=
2anm\left(\frac1a-\frac1b\right)^2\frac ab .
\]
Since \(a=p^s\), \(b=q^t\), and
\[
n=p^\alpha q^\beta n',\qquad m=p^\gamma q^\delta m',
\]
we have
\[
2anm\left(\frac1a-\frac1b\right)^2\frac ab
=
2\Delta_{n,m}(a,b)q^\beta
\left(\frac{(b-a)^2pa^2}{b^2}\right).
\]
It remains to show that
\[
\frac{(b-a)^2pa^2}{b^2}\ge 1
\]
when \(\{a,b\}\ne\{2,3\}\). If \(b-a\ge 2\), then, since \(b<2a\),
\[
\frac{(b-a)^2pa^2}{b^2}
\ge
\frac{4pa^2}{b^2}
>
p
\ge 2.
\]
Thus we may assume \(b=a+1\). Since \(\{a,b\}\ne\{2,3\}\), we have
\(a\ge 3\), and therefore
\[
\frac{(b-a)^2pa^2}{b^2}
=
\frac{pa^2}{(a+1)^2}
=
\frac{p}{(1+1/a)^2}
\ge
\frac{9p}{16}
>1,
\]
as desired.

Now, we consider the case $\{a,b\}=\{2,3\}$.  Write
\[
  n=6N,\qquad m=6M,
\]
where \(N,M\ge 1\).
We need to prove
\[
  2\binom{3(N+M)}{3N}-(3^\beta-3)\binom{2(N+M)}{2N}
  >6\binom{2(N+M)}{2N},
\]
or equivalently
\begin{equation}\label{eq:exceptional-target}
  2\binom{3(N+M)}{3N}>(3^\beta+3)\binom{2(N+M)}{2N}.
\end{equation}
For fixed \(N\), define
\[
  R_M:=\frac{\binom{3(N+M)}{3N}}{\binom{2(N+M)}{2N}}.
\]
We claim that \(R_M\) is increasing in \(M\).  A direct calculation gives
\[
  \frac{R_{M+1}}{R_M}
  =
  \frac{(3N+3M+1)(3N+3M+2)(2M+1)}
       {(3M+1)(3M+2)(2N+2M+1)}.
\]
Now
\[
  \frac{3N+3M+1}{3M+1}
  =1+\frac{3N}{3M+1}
  >1+\frac{2N}{2M+1}
  =\frac{2N+2M+1}{2M+1},
\]
because \(3/(3M+1)>2/(2M+1)\).  Also
\(
  \frac{3N+3M+2}{3M+2}>1.
\)
Hence \(R_{M+1}>R_M\), so \(R_M\ge R_1\).  Therefore
\[
  2R_M\ge 2R_1
  =\frac{(3N+2)(3N+1)}{2N+1}.
\]
Moreover,
\[
  \frac{(3N+2)(3N+1)}{2N+1}-3(N+1)
  =\frac{3N^2-1}{2N+1}>0.
\]
Thus
\(
  2R_M>3(N+1).
\)
Since \(n=6N\), one has
\(
  3^\beta=3^{1+v_3(N)}\le 3N.
\)
Consequently, we have
\[
  3^\beta+3\le 3(N+1)<2R_M,
\]
multiplying by \(\binom{2(N+M)}{2N}\) gives \eqref{eq:exceptional-target}.

(ii) In this case, by Lemma \ref{lemma:Binom_ineq}, we have
$$
 \begin{array}{rcl}
 a\displaystyle\binom{\frac{n+m}{a}}{\frac{n}{a},\frac{m}{a}}\Big/\binom{\frac{n+m}{b}}{\frac{n}{b},\frac{m}{b}}&\geq& a (1+\frac{m}{n})^{n(\frac{1}{a}-\frac{1}{b})}(1+\frac{a}{b}\frac{n}{m})^{m(\frac{1}{a}-\frac{1}{b})}\\[4mm]
  &> & a m (\frac{1}{a}-\frac{1}{b})   n(\frac{1}{a}-\frac{1}{b})\frac{a}{b}\\[4mm]
 &= &  anm(\frac{1}{a}-\frac{1}{b})^2\frac{a}{b}\\[4mm]
 &= & \Delta_{n,m}(a,b) q^\beta   \left((b-a)^2 \frac{pa^2}{b^2}\right).
 \end{array}
 $$
Similar to the above, we have $(b-a)^2\frac{pa^2}{b^2}\geq 1$ and the desired result follows.
\end{proof}

\begin{lemma}\label{lemma:min_ord}
Let $G$ and $H$ be two finite abelian groups of order $n$. Let $m$ be a positive integer.  Denote
$$\begin{array}{c}
\mathcal{E}_G:=\{d\in \mathbb N\ |\ \varphi_G(d) > \varphi_H(d)\text{ for }d|(n,m)\},\\[2mm]
\mathcal{E}_H:= \{d\in \mathbb N\ |\ \varphi_G(d)< \varphi_H(d)\text{ for }d|(n,m)\}.
\end{array}$$
If $\mathcal{E}_G$ (resp. $\mathcal{E}_H$) is nonempty, then $\min{\mathcal{E}_G}$ (resp. $\min{\mathcal{E}_H}$) is a prime power.
\end{lemma}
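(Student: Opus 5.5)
We must show: if $d_0=\min\mathcal E_G$ exists, then $d_0$ is a prime power (the case of $\mathcal E_H$ is symmetric). The plan is to exploit the Möbius-type structure of $\varphi_G$. For a finite abelian group, the number of elements of order dividing $d$ is
\[
N_G(d):=|\{g\in G: dg=0\}|=\sum_{e\mid d}\varphi_G(e).
\]
This count is multiplicative in $d$: writing $d=\prod p^{k_p}$, we have $N_G(d)=\prod_p N_{\mathrm{Syl}_p(G)}(p^{k_p})$, since $G[d]=\bigoplus_p \mathrm{Syl}_p(G)[p^{k_p}]$. Consequently $\varphi_G$ is itself multiplicative, being the Dirichlet convolution of $N_G$ with $\mu$:
\[
\varphi_G(d)=\prod_{p^k\parallel d}\varphi_{\mathrm{Syl}_p(G)}(p^k).
\]
Equivalently, an element of order $d$ decomposes uniquely into components of order $p^{k_p}$ in the Sylow subgroups.

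Now suppose $d_0=\min\mathcal E_G$ is not a prime power, and factor $d_0=d_1d_2$ with $d_1,d_2>1$ coprime. Both $d_1$ and $d_2$ divide $d_0$, which divides $(n,m)$, so they also divide $(n,m)$; moreover both are strictly smaller than $d_0$. By minimality of $d_0$, neither lies in $\mathcal E_G$, so
\[
\varphi_G(d_1)\le\varphi_H(d_1),\qquad \varphi_G(d_2)\le\varphi_H(d_2).
\]
All of these values are nonnegative integers. Multiplicativity then gives
\[
\varphi_G(d_0)=\varphi_G(d_1)\varphi_G(d_2)\le\varphi_H(d_1)\varphi_H(d_2)=\varphi_H(d_0),
\]
which contradicts $d_0\in\mathcal E_G$. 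Hence $d_0$ is a prime power. The identical argument with the roles of $G$ and $H$ interchanged handles $\min\mathcal E_H$. We also note that $d_0\neq1$, since $\varphi_G(1)=\varphi_H(1)=1$; so "prime power" here genuinely means $p^k$ with $k\ge1$.

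The only step requiring care is justifying the multiplicativity of $\varphi_G$. It follows either from the primary decomposition of $G$ as above, or from formula \eqref{eq:phi-formula}: the function $\ell\mapsto\prod_i(n_i,\ell)$ is multiplicative in $\ell$, and convolving with $\mu$ preserves multiplicativity. Everything else is the short minimality argument, so no real obstacle is expected.
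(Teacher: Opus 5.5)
Your proof is correct and takes essentially the same route as the paper. Both arguments rest on the multiplicativity $\varphi_G(d)=\prod_i\varphi_{\mathrm{Syl}_{p_i}(G)}(p_i^{d_i})$ coming from the Sylow decomposition, combined with nonnegativity of the values and the minimality of $d_0$. The paper phrases it as ``if $\varphi_G(d)>\varphi_H(d)$ then some prime-power factor $p_i^{d_i}\mid d$ already satisfies $\varphi_G(p_i^{d_i})>\varphi_H(p_i^{d_i})$.'' You instead split $d_0$ into two coprime factors and argue by contradiction, which is the same idea, and you rightly note that $1\notin\mathcal{E}_G$.
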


\begin{proof} Let
$$n=\prod_{i=1}^s p_i^{n_i},\quad m=\prod_{i=1}^s p_i^{m_i},$$
where $n_i,m_i\geq 0$ for $1\le i\le s$.
First, we assume that $\mathcal{E}_G$ is nonempty.

For any $d|(n,m)$, let $d=\prod_{i=1}^s p_i^{d_i}$, where $0 \leq d_i\leq \min\{n_i,m_i\}$ for $1\le i\le s$.
Observe that
$$G=\operatorname{Syl}_{p_1}(G)\oplus\cdots\oplus \operatorname{Syl}_{p_{s}}(G).$$
Therefore,
$g=(g_1,g_2,\cdots,g_s)\in G$  (where $g_i\in  \operatorname{Syl}_{p_i}(G)$) has order $d$ if and only if ord$(g_i)=p_i^{d_i}$.
It follows that
$$\varphi_G(d)=\prod_{i=1}^s \varphi_{\operatorname{Syl}_{p_i}(G)}(p_i^{d_i}).$$  Consequently,  if  $\varphi_G(d) > \varphi_H(d)$, there exists some $i \in \{1, \dots, s\}$ such that
\[
\varphi_{\operatorname{Syl}_{p_i}(G)}(p_i^{d_i}) = \varphi_G(p_i^{d_i}) > \varphi_{\operatorname{Syl}_{p_i}(H)}(p_i^{d_i}) = \varphi_H(p_i^{d_i}).
\] The desired result follows immediately. The argument is analogous when $\mathcal{E}_H$ is nonempty.
\end{proof}

\begin{lemma}\label{lemma:min-order_p}
Let $G$ and $H$ be two finite abelian groups of order $n$. Let $m $ be a positive integer. Let $n=q^\beta n'$  and $m=q^\delta m'$ with $(n',q)=(m',q)=1$. Let
$$\begin{array}{c}
\mathcal{E}=\{k\in\mathbb{N} \ \big |\ \varphi_G(q^k) \neq \varphi_H(q^k),\ q^k|(n,m) \}.
\end{array}$$
Suppose that $\mathcal{E}$ is nonempty and let $t=\min\mathcal{E}$. If $\varphi_G(q^t) < \varphi_H(q^t),$ then we have $q^{t+1}|n$, i.e., $\beta>t$.
Moreover,
$$ q^t\leq  \varphi_H(q^t)-\varphi_G(q^t)  \leq  q^\beta -q^t.$$
\end{lemma}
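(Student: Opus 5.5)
The plan is to pass from the order-counting function $\varphi$ to the sizes of the $q^k$-torsion subgroups. These sizes are powers of $q$, which forces the asserted divisibility and inequalities. For $k\ge 0$ put $G[q^k]:=\{g\in G: q^kg=0\}$, and define $H[q^k]$ similarly. Each is a subgroup of the Sylow $q$-subgroup, and
\[
        |G[q^k]|=\sum_{j=0}^{k}\varphi_G(q^j),
\]
so $|G[q^k]|$ is a power of $q$ and divides $|\mathrm{Syl}_q(G)|=q^\beta$, and likewise for $H$. Since $t=\min\mathcal E$, we have $\varphi_G(q^j)=\varphi_H(q^j)$ for all $j<t$. (Every $q^j$ with $j<t$ divides $q^t\mid (n,m)$, so these indices are admissible; $j=0$ is trivial.) Hence $|G[q^{t-1}]|=|H[q^{t-1}]|=:q^u$. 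Write $|G[q^t]|=q^b$ and $|H[q^t]|=q^a$. Then
\[
        \varphi_H(q^t)-\varphi_G(q^t)=q^a-q^b>0,
\]
so $a>b\ge u$.

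Step 1 is the key lower bound $b\ge t$, that is, $|G[q^t]|\ge q^t$. I would use the standard fact that in a finite abelian $q$-group $P$ the chain $P[q^0]\subseteq P[q^1]\subseteq\cdots$ increases strictly until it reaches $P$. Indeed, if $P[q^j]=P[q^{j+1}]$, then $P[q^{j+1}]=P[q^{j+2}]$ by multiplying by $q$, so the chain is constant from then on. Since $q^t\mid n$, we have $|\mathrm{Syl}_q(G)|\ge q^t$. Thus either $G[q^t]=\mathrm{Syl}_q(G)$, which has size at least $q^t$, or the chain is still strictly increasing up to index $t$. In the latter case each step multiplies the size by at least $q$, so again $|G[q^t]|\ge q^t$.

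Step 2 gives the lower bound on the difference. By Step 1,
\[
        q^a-q^b=q^b(q^{a-b}-1)\ge q^b(q-1)\ge q^b\ge q^t .
\]

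Step 3 gives the upper bound and $\beta>t$. Since $q^a\le q^\beta$ and $q^b\ge q^t$ by Step 1,
\[
        \varphi_H(q^t)-\varphi_G(q^t)=q^a-q^b\le q^\beta-q^t .
\]
Combining this with Step 2 gives $q^\beta\ge 2q^t>q^t$, hence $\beta>t$, i.e. $q^{t+1}\mid n$.

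The only delicate point is Step 1. A naive bound $b\ge u\ge t-1$ only yields $q^{t-1}(q-1)$, which falls short of $q^t$ when $q=2$. The strict-growth argument for the torsion filtration, together with $q^t\mid n$, is what supplies the missing factor.
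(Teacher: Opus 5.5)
Your proof is correct. Its core coincides with the paper's: both identify $\sum_{i=0}^{t}\varphi_G(q^i)$ and $\sum_{j=0}^{t}\varphi_H(q^j)$ as the orders $q^{b}$ and $q^{a}$ of the $q^t$-torsion subgroups (the paper's $q^{e_1}$, $q^{e_2}$), and both read the two-sided bound off from $t\le b<a\le\beta$. Your Step~1, the strict growth of the torsion filtration until it reaches the Sylow subgroup, proves the inequality $e_1\ge t$ that the paper only calls easy to verify.

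The real difference is how $\beta>t$ is obtained. The paper proves it first and separately, using the cyclic decompositions of $\operatorname{Syl}_q(G)$ and $\operatorname{Syl}_q(H)$. If $\operatorname{Syl}_q(H)$ were cyclic, then so would be $\operatorname{Syl}_q(G)$, and two cyclic Sylow subgroups of order at least $q^t$ give $\varphi_G(q^t)=\varphi_H(q^t)=q^t-q^{t-1}$. Hence $\operatorname{Syl}_q(H)$ has at least two cyclic factors, one of order at least $q^t$, so $q^{t+1}\mid n$. You instead get $\beta>t$ for free from $q^t\le q^\beta-q^t$. This is shorter and needs no decomposition into cyclic factors.

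The paper's detour does make explicit that $\operatorname{Syl}_q(H)$ is non-cyclic, and it reuses that argument in the proof of Lemma~\ref{lemma:p_order_estimation}. Your setup yields this just as easily: $|H[q^t]|=q^{a}\ge q^{t+1}$ is impossible in a cyclic $q$-group, whose $q^t$-torsion has order at most $q^t$.
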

\begin{proof}
Let
$$\operatorname{Syl}_q(G)= C_{q^{n_1}}\oplus\cdots\oplus C_{q^{n_c}},\quad
\operatorname{Syl}_q(H)=C_{q^{m_1}}\oplus\cdots\oplus C_{q^{m_d}},$$
where $1\leq n_1\leq \cdots \leq n_c$ and  $1\leq m_1\leq \cdots \leq m_d.$ Since $\varphi_G(q^t) < \varphi_H(q^t)$, it follows that $m_d \ge t$.  We claim that $d\geq 2.$ Otherwise, we also have $c=1$.  In other words, both $\operatorname{Syl}_q(G)$ and $\operatorname{Syl}_q(H)$ are cyclic groups of order at least $q^t$, which implies
$\varphi_G(q^t) = q^t - q^{t-1} = \varphi_H(q^t),$ contradicting our assumption. This proves the claim. Note that $d\geq 2$ implies $q^{m_1+m_d}| n,$ that is $q^{t+1}|n,$ i.e., $\beta>t$.

It is easy to verify that $\sum_{i=0}^t \varphi_G(q^i)=q^{e_1}$ and  $\sum_{j=0}^t \varphi_H(q^j)=q^{e_2}$ for some $e_1$ and $e_2$ with $ t\leq e_1<e_2 \leq  \beta.$  As $\varphi_H(q^i)=\varphi_G(q^i)$ for $i=0,\cdots,t-1,$ we have
$$ \varphi_H(q^t)-\varphi_G(q^t) =\sum_{i=0}^t\left(\varphi_H(q^i)-\varphi_G(q^i)\right)= \sum_{i=0}^t\varphi_H(q^i) - \sum_{j=0}^t\varphi_G(q^j)=q^{e_2}-q^{e_1}.$$
Observe that $q^t\leq q^{e_1}(q^{e_2-e_1}-1)=q^{e_2}-q^{e_1}\leq q^\beta - q^t.$ Hence, we conclude that
$$ q^t\leq  \varphi_H(q^t)-\varphi_G(q^t)  \leq  q^\beta -q^t.$$
This completes the proof.
\end{proof}

\begin{lemma}\label{lemma:p_order_estimation}
Let $G$ and $H$ be two finite abelian groups of order $n$. Let $m $ be a positive integer. Let $n=p^\alpha n'$  and $m=p^\gamma m'$ with $(n',p)=(m',p)=1$. Define
$$\begin{array}{c}
\mathcal{E}=\{k\in\mathbb{N} \ \big |\ \varphi_G(p^k) \neq \varphi_H(p^k),\ p^k|(n,m) \}.
\end{array}$$
Suppose that $\mathcal{E}$ is nonempty, and let $s = \min \mathcal{E}$. Assume that the following conditions hold:
\begin{enumerate}
    \item $s \ge 2$;
    \item $p^{s+1} \mid (n,m)$;
    \item $\varphi_G(p^s) > \varphi_H(p^s)$, but $\varphi_G(p^{s+1}) < \varphi_H(p^{s+1})$.
\end{enumerate}
Then it follows that $\alpha \ge s + 2.$
\end{lemma}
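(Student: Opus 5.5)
The idea is to translate the hypotheses into statements about the cyclic decomposition of the Sylow $p$-subgroups and then count the exponent of $p$ in $n=|G|$ directly. As in the proof of Lemma~\ref{lemma:min_ord}, only the Sylow $p$-subgroups matter, since $\varphi_G(p^k)=\varphi_{\operatorname{Syl}_p(G)}(p^k)$. So write
\[
\operatorname{Syl}_p(G)\cong C_{p^{n_1}}\oplus\cdots\oplus C_{p^{n_c}},\qquad \operatorname{Syl}_p(H)\cong C_{p^{m_1}}\oplus\cdots\oplus C_{p^{m_e}},
\]
and set $c_k^G:=|\{j:n_j\ge k\}|$ and $c_k^H:=|\{j:m_j\ge k\}|$. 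Both sequences are nonincreasing in $k$. Moreover, $|G[p^k]|=p^{f_G(k)}$ with $f_G(k)=\sum_j\min(n_j,k)=\sum_{i\le k}c_i^G$, and similarly for $H$. Since $\sum_{i\le k}\varphi_G(p^i)=|G[p^k]|$, we get
\[
\varphi_G(p^k)=p^{f_G(k)}-p^{f_G(k-1)},
\]
and likewise for $H$. Finally, $\alpha=\sum_{k\ge1}c_k^G=\sum_{k\ge1}c_k^H$, because both Sylow subgroups have order $p^\alpha$.

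First I will use minimality of $s$. For $k<s$ we have $\varphi_G(p^k)=\varphi_H(p^k)$, so induction gives $f_G(k)=f_H(k)$ for all $k\le s-1$; this uses that $p^k$ is determined by $p^k-p^{k'}$ once $p^{k'}$ is known. Subtracting consecutive terms then gives $c_k^G=c_k^H$ for $k\le s-1$. Next, hypothesis (3) says $\varphi_G(p^s)>\varphi_H(p^s)$. Since $f_G(s-1)=f_H(s-1)$, this forces $f_G(s)>f_H(s)$, i.e.
\[
c_s^G\ge c_s^H+1 .
\]

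Second, I will use the other half of (3): $\varphi_H(p^{s+1})>\varphi_G(p^{s+1})\ge 0$. This shows that $H$ has an element of order $p^{s+1}$, so $c_{s+1}^H\ge1$, and hence $c_s^H\ge1$ by monotonicity. Combined with the previous step, $c_s^G\ge2$. By monotonicity again, $c_k^G\ge2$ for every $1\le k\le s$. Therefore
\[
\alpha=\sum_{k\ge1}c_k^G\ \ge\ \sum_{k=1}^{s}c_k^G\ \ge\ 2s\ \ge\ s+2,
\]
where the last inequality uses hypothesis (1), $s\ge2$.

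I do not expect a serious obstacle. The only delicate point is the first step: passing correctly from equality, or strict inequality, of the $\varphi$-values to the corresponding statements about the $f$'s and then the $c_k$'s, which requires the inductive use of the minimality of $s$. Hypothesis (2) appears not to be needed beyond guaranteeing that $p^{s+1}$ is a meaningful divisor in the setting. If this counting looked too loose, the fallback would be a direct comparison of $p^{f_G(s+1)}-p^{f_G(s)}$ with $p^{f_H(s+1)}-p^{f_H(s)}$, but the monotonicity argument above already suffices.
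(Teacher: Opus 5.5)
Your proof is correct. It reaches the conclusion by a somewhat different, more systematic route than the paper.

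The paper works only at level $k=1$. It asserts, by reference to the argument of Lemma~\ref{lemma:min-order_p}, that $\operatorname{Syl}_p(G)$ has $c\ge 2$ cyclic factors. It then rules out $\operatorname{Syl}_p(H)$ being cyclic, since $d=1$ would give $\varphi_G(p)=p^c-1>p-1=\varphi_H(p)$ and hence $s\le 1$. Finally it uses the element of order $p^{s+1}$ in $H$ to bound $\alpha\ge m_1+m_d\ge 1+(s+1)$.

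You instead track the conjugate-partition counts $c_k$ through every level up to $s$, using $|G[p^k]|=p^{\sum_{i\le k}c_i^G}$. This gives $c_k^G=c_k^H$ for $k<s$ and $c_s^G\ge c_s^H+1\ge 2$, hence the sharper bound $\alpha\ge 2s$.

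What your route buys:
\begin{itemize}
\item It is self-contained: it actually proves the non-cyclicity of $\operatorname{Syl}_p(G)$, which the paper only sketches.
\item It isolates exactly where hypothesis (1) enters, namely only in $2s\ge s+2$. The hypothesis is genuinely needed: $\operatorname{Syl}_p(G)=C_p\oplus C_p$, $\operatorname{Syl}_p(H)=C_{p^2}$ gives $s=1$ and $\alpha=2<s+2$.
\end{itemize}
The paper's route is shorter, needing only the comparison at $k=1$.

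You are also right that hypothesis (2) is not used: $p^{s+1}\mid n$ already follows from $\varphi_H(p^{s+1})>0$. One step you left implicit is that $\varphi_G(p^k)=\varphi_H(p^k)$ for $k<s$ requires $p^k\mid (n,m)$. This holds because $p^k\mid p^s$ and $p^s\mid (n,m)$, as $s\in\mathcal E$.
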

\begin{proof}
Let
$$\operatorname{Syl}_p(G)= C_{p^{n_1}}\oplus\cdots\oplus C_{p^{n_c}}\text{ and }
\operatorname{Syl}_p(H)=C_{p^{m_1}}\oplus\cdots\oplus C_{p^{m_d}},$$
where $1\leq n_1\leq \cdots \leq n_c$ and  $1\leq m_1\leq \cdots \leq m_d$. Following the argument in Lemma \ref{lemma:min-order_p}, we have $c\geq 2$ and $n_c\geq s$, because $\varphi_G(p^k)=\varphi_H(p^k)$ for $k<s$ and  $\varphi_G(p^s)>\varphi_H(p^s)$. Since $\varphi_G(p^{s+1}) < \varphi_H(p^{s+1})$,  we have  $m_d\geq s+1$. If $d=1,$ then $\varphi_H(p)=p-1.$ Therefore, we have $\varphi_G(p)= p^c-1\geq p^2-1>p-1=\varphi_H(p)$ and $s\leq 1$, which contradicts the assumption that $s\geq2$. Consequently, we have $d\geq 2$ and $p^{m_1+m_{d}}|n$, i.e., $\alpha\geq s+2$.
\end{proof}

The following proof splits according to whether the first negative discrepancy \(b\) satisfies
\(b\ge 2a\) or \(a<b<2a\). In the first case Lemma \ref{lemma:Binom_ineq} gives immediate domination.
In the second case Lemma \ref{lemma:Impartant_estimation} controls the first negative term, and the remaining
nearby negative terms are handled by a chaining argument through prime powers.

\begin{proof}[Proof of Lemma \ref{lem:rigidity-order-distribution}.]  Let $|G|=|H|=n$ and $m$ be a positive integer.  It suffices to prove that if $|\mathsf M(G,m)|=|\mathsf M(H,m)|$, then we have $\varphi_G(d)=\varphi_H(d)$ for any $d|(n,m)$. Assume to the contrary that there exists some $d|(n,m)$ such that $\varphi_G(d)\neq\varphi_H(d)$.

Recall that
$$|\mathsf M(G,m)|=\frac{1}{n+m}\sum_{d| (n,m)}\varphi_G(d)\binom{\frac{n+m}{d}}{\frac{n}{d}, \frac{m}{d}}$$
and
$$|\mathsf M(H,m)|=\frac{1}{n+m}\sum_{d| (n,m)}\varphi_{H}(d)\binom{\frac{n+m}{d}}{\frac{n}{d}, \frac{m}{d}}.$$

Let
$$a=\min\{d\ |\ \varphi_G(d)\neq \varphi_H(d)\text{ for }d\mid (n,m)\}.$$
Without loss of generality, we assume that
$\varphi_G(a)> \varphi_H(a).$
Under this assumption, we aim to show that
\begin{equation}\label{largereciprocity}
|\mathsf M(G,m)|>|\mathsf M(H,m)|,
\end{equation}
which contradicts our assumption.

It is clear that if $\varphi_G(d)\ge \varphi_H(d)$ holds for any $d|(n,m)$, then the desired result follows. Hence, we may assume that there exists a divisor $d$ of $(n,m)$ such that $\varphi_G(d)< \varphi_H(d)$.
Let
$$b=\min\{d\ |\ \varphi_G(d)< \varphi_H(d)\text{ for }d|(n,m)\}.$$
By definition, we have $a<b$.
Moreover, by Lemma \ref{lemma:min_ord}, both $a$ and $b$ are prime powers.
Recall that
$$\mathcal{E}_H=\{e\in \mathbb{N}\  \big  |\    \varphi_G(e)< \varphi_H(e) \text{ and } e|(n,m) \}.$$
We denote
$$\mathcal{F}:=\{e\in \mathbb{N}\  \big  |\    \varphi_G(e)> \varphi_H(e),\ e>a,    \text{ and } e|(n,m) \},$$
and
$$S_{\mathcal{F}}:=\sum_{
    e\in \mathcal{F}
 } \left(\varphi_G(e)-\varphi_H(e)\right)\binom{\frac{n+m}{e}}{\frac{n}{e}, \frac{m}{e}}.$$
It is clear that $S_{\mathcal{F}}\ge 0$. Therefore, we have
$$\begin{array}{rcl}
& &(n+m)\displaystyle\left(|\mathsf M(G,m)|-|\mathsf M(H,m)|\right) \\[4mm]
&=& \displaystyle\sum_{d| (n,m)} \left(\varphi_G(d)-\varphi_H(d)\right) \binom{\frac{n+m}{d}}{\frac{n}{d}, \frac{m}{d}}\\[4mm]
&= &  \displaystyle  \left(\varphi_G(a)-\varphi_H(a)\right)\binom{\frac{n+m}{a}}{\frac{n}{a},\frac{m}{a}} +S_{\mathcal{F}} +\sum_{
    e\in \mathcal{E}_H
 } \left(\varphi_G(e)-\varphi_H(e)\right)\binom{\frac{n+m}{e}}{\frac{n}{e}, \frac{m}{e}} \\[4mm]
 &= & \displaystyle  \left(\varphi_G(a)-\varphi_H(a)\right)
 \binom{\frac{n+m}{a}}{\frac{n}{a},\frac{m}{a}}+S_{\mathcal{F}} -\sum_{
    e\in \mathcal{E}_H
 }(\varphi_H(e)- \varphi_G(e))\binom{\frac{n+m}{e}}{\frac{n}{e}, \frac{m}{e}} \\[4mm]
  &\geq & \displaystyle a\binom{\frac{n+m}{a}}{\frac{n}{a},\frac{m}{a}}+S_{\mathcal{F}}-\sum_{
    e\in \mathcal{E}_H
 } (\varphi_H(e)- \varphi_G(e))\binom{\frac{n+m}{e}}{\frac{n}{e}, \frac{m}{e}},\\
\end{array}
$$
where the last inequality follows from Lemma \ref{lemma:min-order_p}.

Therefore, in order to prove (\ref{largereciprocity}), it suffices to show that
\begin{equation}\label{sufficestoprove}
a\binom{\frac{n+m}{a}}{\frac{n}{a},\frac{m}{a}}+S_{\mathcal{F}}  > \sum_{
    e\in \mathcal{E}_H
 } (\varphi_H(e)- \varphi_G(e))\binom{\frac{n+m}{e}}{\frac{n}{e}, \frac{m}{e}}.
\end{equation}
As $a$ and $b$ are prime powers, we may assume that $a=p^s$ and $b=q^t$, where $p$ and $q$ are primes (not necessarily distinct).

If $b\ge 2a$, i.e., $q^t\geq 2 p^s$. By Lemma \ref{lemma:Binom_ineq}.(ii), we have
$$a\binom{\frac{n+m}{a}}{\frac{n}{a},\frac{m}{a}}  > n \binom{\frac{n+m}{b}}{\frac{n}{b}, \frac{m}{b}}\geq \sum_{
    e\in \mathcal{E}_H
 } \varphi_H(e)\binom{\frac{n+m}{b}}{\frac{n}{b}, \frac{m}{b}}\geq \sum_{
    e\in \mathcal{E}_H
 } (\varphi_H(e)- \varphi_G(e))\binom{\frac{n+m}{e}}{\frac{n}{e}, \frac{m}{e}},$$
where the last inequality follows from Lemma \ref{lemma:Binom_ineq}.(i). Thus, (\ref{sufficestoprove}) follows immediately.

If $a<b< 2a$, i.e., $p^s<q^t< 2 p^s$. In this case, we have $$q^{t-1}=\frac{q^t}{q}< \frac{2p^s}{q}\leq p^s.$$
Consequently, we have $p\neq q$. Moreover, by the definition of $b=q^t$, we have $t=\min\{ i\in\mathbb N \ \big | \ \varphi_G(q^i)\neq  \varphi_H(q^i)\}$ and  $\varphi_G(q^t)<  \varphi_H(q^t)$. By Lemma \ref{lemma:min-order_p}, we have $q^{t+1}|n$ and $p^{s+1}|n$.  Assume that
$$n=p^\alpha q^\beta n',\quad m=p^\gamma q^\delta m',$$
where $(n',pq)=(m',pq)=1$.
Then we have  $a=p^s<b=q^t<2p^s$  and  $1\leq s< \alpha,$ $1\leq t< \beta$, and  $s\leq \gamma$, $t\leq \delta$. Consequently, we have $pq|n(\frac{1}{a}-\frac{1}{b})$, which implies $n(\frac{1}{a}-\frac{1}{b})\geq pq\geq 6$.

Recall that $\Delta_{n,m}(a,b)=p^{\alpha+\gamma-2s-1}q^{\delta-t}n'm'$.  As $\alpha+\gamma>2s,$ $\Delta_{n,m}(a,b)\geq 1.$ By Lemma \ref{lemma:Impartant_estimation}.(i),
 we have
\begin{equation}\label{Case2_a2b}
 a\binom{\frac{n+m}{a}}{\frac{n}{a},\frac{m}{a}}- (q^\beta-q^{t} )  \binom{\frac{n+m}{b}}{\frac{n}{b},\frac{m}{b}}>  2b  \binom{\frac{n+m}{b}}{\frac{n}{b},\frac{m}{b}}.
\end{equation}
We denote
$$\mathcal{E}_1:=\{e\in\mathcal{E}_H\ \big | \ b<e<2b\},\quad  \mathcal{E}_2:=\{e\in\mathcal{E}_H\ \big | \ e\geq 2b\}.$$ By Lemma \ref{lemma:min-order_p}, we have  $\varphi_H(b)- \varphi_G(b)\leq q^\beta -q^{t}.$ Therefore, by (\ref{Case2_a2b}), we obtain
\begin{align*}
   &\quad\ S_{\mathcal F}+\displaystyle a\binom{\frac{n+m}{a}}{\frac{n}{a},\frac{m}{a}} - \sum_{
    e\in \mathcal{E}_H
 } (\varphi_H(e)- \varphi_G(e))\binom{\frac{n+m}{e}}{\frac{n}{e}, \frac{m}{e}} \\
 &\geq S_{\mathcal F}+\displaystyle a\binom{\frac{n+m}{a}}{\frac{n}{a},\frac{m}{a}} - (\varphi_H(b)- \varphi_G(b))\binom{\frac{n+m}{b}}{\frac{n}{b}, \frac{m}{b}} -\sum_{
    e\in \mathcal{E}_1\cup  \mathcal{E}_2
 } \varphi_H(e)\binom{\frac{n+m}{e}}{\frac{n}{e}, \frac{m}{e}}\\
  &>  \displaystyle S_{\mathcal F}+ 2b \binom{\frac{n+m}{b}}{\frac{n}{b}, \frac{m}{b}} -\sum_{
    e_1\in \mathcal{E}_1
 } \varphi_H(e_1)\binom{\frac{n+m}{e_1}}{\frac{n}{e_1}, \frac{m}{e_1}}-\sum_{
    e_2\in \mathcal{E}_2
 } \varphi_H(e_2)\binom{\frac{n+m}{e_2}}{\frac{n}{e_2}, \frac{m}{e_2}}\\
   &=  \displaystyle S_{\mathcal F}+b \binom{\frac{n+m}{b}}{\frac{n}{b}, \frac{m}{b}} -\sum_{
    e_1\in \mathcal{E}_1
 } \varphi_H(e_1)\binom{\frac{n+m}{e_1}}{\frac{n}{e_1}, \frac{m}{e_1}}+b \binom{\frac{n+m}{b}}{\frac{n}{b}, \frac{m}{b}} -\sum_{
    e_2\in \mathcal{E}_2
 } \varphi_H(e_2)\binom{\frac{n+m}{e_2}}{\frac{n}{e_2}, \frac{m}{e_2}}.
\end{align*}
Denote
$$\mathcal S_1:=S_{\mathcal F}+b \binom{\frac{n+m}{b}}{\frac{n}{b}, \frac{m}{b}} -\sum_{
    e_1\in \mathcal{E}_1
 } \varphi_H(e_1)\binom{\frac{n+m}{e_1}}{\frac{n}{e_1}, \frac{m}{e_1}}$$
and
$$
 \mathcal S_2:=b \binom{\frac{n+m}{b}}{\frac{n}{b} \frac{m}{b}} -\sum_{
    e_2\in \mathcal{E}_2
 } \varphi_H(e_2)\binom{\frac{n+m}{e_2}}{\frac{n}{e_2}, \frac{m}{e_2}}.$$
In order to prove (\ref{sufficestoprove}), it suffices to show that $\mathcal S_1>0$ and $\mathcal S_2>0$.

First, we consider $\mathcal S_2$. If $\mathcal{E}_2$  is empty, then the desired result follows immediately. Otherwise, assume that $\mathcal{E}_2$  is not empty, let $c=\min \mathcal{E}_2.$ By definition, we have $c\geq 2b$. Therefore, by Lemma \ref{lemma:Binom_ineq}.(ii), we have
$$
\begin{array}{rcl}
\displaystyle b \binom{\frac{n+m}{b}}{\frac{n}{b}, \frac{m}{b}} -\sum_{
    e_2\in \mathcal{E}_2
 } \varphi_H(e_2)\binom{\frac{n+m}{e_2}}{\frac{n}{e_2}, \frac{m}{e_2}} &\geq& \displaystyle  b \binom{\frac{n+m}{b}}{\frac{n}{b}, \frac{m}{b}} -\sum_{
    e_2\in \mathcal{E}_2
 } \varphi_H(e_2)\binom{\frac{n+m}{c}}{\frac{n}{c}, \frac{m}{c}}\\[5mm]
 &\geq & \displaystyle     b \binom{\frac{n+m}{b}}{\frac{n}{b}, \frac{m}{b}} - n \binom{\frac{n+m}{c}}{\frac{n}{c}, \frac{m}{c}}>0.
\end{array}
$$
Therefore, $\mathcal S_2>0$, as desired.

Next, we consider $\mathcal S_1$. If $\mathcal{E}_1$ is empty, then the desired result follows. So, we may assume that $\mathcal{E}_1$ is not empty.
We claim that every element of $\mathcal{E}_1$ is a prime power, and the primes involved are distinct. More precisely, we can write $\mathcal{E}_1 = \{q_1^{t_1}, q_2^{t_2}, \dots, q_v^{t_v}\}, \:\text{where} \: q_i \neq q_j \ \text{for } i \neq j.$ To see this, take any $e_1 \in \mathcal{E}_1$. Since $\varphi_G(e_1) < \varphi_H(e_1)$ and $q^t < e_1 < 2 q^t$, we can factor $e_1$ as $e_1 = \ell^k e_1',$ with $\ell$ a prime and $e_1' \in \mathbb{N}$, where $\varphi_G(\ell^k) < \varphi_H(\ell^k)$.
If $e_1' \ge 2$, then $\ell^k = \frac{e_1}{e_1'} < \frac{2 q^t}{2} = q^t,$ which contradicts the minimality of $b = q^t$. Therefore, $e_1' = 1$, and each $e_1$ is indeed a prime power.


Now denote $$\mathcal{E}_1^{G}:=\{\ell^k\in \mathcal{E}_1  \ \big |\ \ell\neq p\   \text{and }\varphi_G(\ell^i)> \varphi_H(\ell^i)\text{ for some }i<k \}.$$

For any $\ell^k\in\mathcal{E}_1^{G}$, let $u=\min\{i\in\mathbb N\ |\ \varphi_G(\ell^i)> \varphi_H(\ell^i)\}$. Moreover, by the definitions of $b$ and $\mathcal{E}_1$, we also have
$$u=\min\{i\in\mathbb N\ |\ \varphi_G(\ell^i)\neq \varphi_H(\ell^i)\}.$$
Using Lemma \ref{lemma:min-order_p}, we have $\varphi_G(\ell^u)-\varphi_H(\ell^u)\ge \ell^u$. By Lemma \ref{lemma:Binom_ineq}.(ii), we have
\begin{equation}\label{sf_larger_than_e1}
(\varphi_G(\ell^u)-\varphi_H(\ell^u))\binom{\frac{n+m}{\ell^u}}{\frac{n}{\ell^u}, \frac{m}{\ell^u}}
\ge \ell^u\binom{\frac{n+m}{\ell^u}}{\frac{n}{\ell^u}, \frac{m}{\ell^u}}
>\varphi_H(\ell^k)\binom{\frac{n+m}{\ell^k}}{\frac{n}{\ell^k}, \frac{m}{\ell^k}}.
\end{equation}
Recall that
$S_{\mathcal{F}}=\sum_{
    e\in \mathcal{F}
 } \left(\varphi_G(e)-\varphi_H(e)\right)\binom{\frac{n+m}{e}}{\frac{n}{e}, \frac{m}{e}}$.
Since $\mathcal{E}_1^{G}$ consists of powers of distinct primes (which are different from $p,q$), by (\ref{sf_larger_than_e1}), we obtain
$$
\displaystyle \begin{array}{rcl}
&&\displaystyle S_{\mathcal F}+b \binom{\frac{n+m}{b}}{\frac{n}{b}, \frac{m}{b}} -\sum_{
    e_1\in \mathcal{E}_1
 } \varphi_H(e_1)\binom{\frac{n+m}{e_1}}{\frac{n}{e_1}, \frac{m}{e_1}}\\[4mm]
 &= &\displaystyle S_{\mathcal F}-\sum_{
    e\in \mathcal{E}_1^{G}
 } \varphi_H(e)\binom{\frac{n+m}{e}}{\frac{n}{e}, \frac{m}{e}}+\displaystyle b \binom{\frac{n+m}{b}}{\frac{n}{b}, \frac{m}{b}} -\sum_{
    e_1\in \mathcal{E}_1\setminus \mathcal{E}_1^{G}
 } \varphi_H(e_1)\binom{\frac{n+m}{e_1}}{\frac{n}{e_1}, \frac{m}{e_1}}\\[5mm]
 &\geq &\displaystyle b \binom{\frac{n+m}{b}}{\frac{n}{b}, \frac{m}{b}} -\sum_{
    e_1\in \mathcal{E}_1\setminus \mathcal{E}_1^{G}
 } \varphi_H(e_1)\binom{\frac{n+m}{e_1}}{\frac{n}{e_1}, \frac{m}{e_1}} .
 \end{array}
 $$
As a result, to prove $\mathcal S_1>0$, it suffices to show that
\begin{equation}\label{sufficestoprove_S2}
b \binom{\frac{n+m}{b}}{\frac{n}{b}, \frac{m}{b}} -\sum_{
    e_1\in \mathcal{E}_1\setminus \mathcal{E}_1^{G}
 } \varphi_H(e_1)\binom{\frac{n+m}{e_1}}{\frac{n}{e_1}, \frac{m}{e_1}}>0.
\end{equation}

If $\mathcal{E}_{1}\setminus \mathcal{E}_1^{G}$ is empty, then we have $\mathcal S_1>0$. Therefore, suppose that $\mathcal{E}_{1}\setminus \mathcal{E}_1^{G}$ is nonempty. Without loss of generality, we may assume that
$$\mathcal{E}_{1}\setminus \mathcal{E}_1^{G}=\{q_1^{t_1},\cdots,q_L^{t_L}\},$$
where $L\le v$ and $q_1^{t_1}<\cdots< q_L^{t_L}$. We denote $b_0:=b$ (with $q_0:=q$ and $t_0:=t$) and $b_i:=q_i^{t_i}$ for $i=1,2,\cdots,L$. Therefore, $b_0<b_1<b_2<\cdots<b_L<2b=2q^t$. For each $i=0,1,\cdots, L-1,$ let
$$n=q_i^{\beta_i}q_{i+1}^{\beta_{i+1}} n_i,\quad m=q_i^{\delta_i}q_{i+1}^{\delta_{i+1}} m_i,$$ where $(n_i,q_iq_{i+1})=(m_i,q_iq_{i+1})=1,$ and denote
$$\Delta_{n,m}(b_i,b_{i+1}):= q_i^{\beta_i+\delta_{i}-2t_i-1}q_{i+1}^{\delta_{i+1}-t_{i+1}}n_im_i.$$

In the following, we shall prove that
\begin{equation}\label{eq:summand_term_i}
    \displaystyle b_i \binom{\frac{n+m}{b_i}}{\frac{n}{b_i}, \frac{m}{b_i}} -\varphi_H(b_{i+1})\binom{\frac{n+m}{b_{i+1}}}{\frac{n}{b_{i+1}}, \frac{m}{b_{i+1}}}> b_{i+1}\binom{\frac{n+m}{b_{i+1}}}{\frac{n}{b_{i+1}}, \frac{m}{b_{i+1}}},\quad i=0, 1,\cdots,L-1.
\end{equation}
Note that, if (\ref{eq:summand_term_i}) holds, then we have
$$\sum_{i=0}^{L-1}\left(\displaystyle b_i \binom{\frac{n+m}{b_i}}{\frac{n}{b_i}, \frac{m}{b_i}} -\varphi_H(b_{i+1})\binom{\frac{n+m}{b_{i+1}}}{\frac{n}{b_{i+1}}, \frac{m}{b_{i+1}}}\right)>\sum_{i=0}^{L-1} b_{i+1}\binom{\frac{n+m}{b_{i+1}}}{\frac{n}{b_{i+1}}, \frac{m}{b_{i+1}}},
$$
which implies
$$b_0 \binom{\frac{n+m}{b_0}}{\frac{n}{b_0}, \frac{m}{b_0}}-\sum_{i=0}^{L-1} \varphi_H(b_{i+1})\binom{\frac{n+m}{b_{i+1}}}{\frac{n}{b_{i+1}}, \frac{m}{b_{i+1}}}>b_L \binom{\frac{n+m}{b_L}}{\frac{n}{b_L}, \frac{m}{b_L}}.$$
Therefore, we have
$$b \binom{\frac{n+m}{b}}{\frac{n}{b}, \frac{m}{b}} -\sum_{
    e_1\in \mathcal{E}_1\setminus \mathcal{E}_1^{G}
 } \varphi_H(e_1)\binom{\frac{n+m}{e_1}}{\frac{n}{e_1}, \frac{m}{e_1}} >b_L \binom{\frac{n+m}{b_L}}{\frac{n}{b_L}, \frac{m}{b_L}}>0,$$
and (\ref{sufficestoprove_S2}) follows.

Now, we prove (\ref{eq:summand_term_i}). Note that, for each $i=0,1,\cdots,L-1$,
\begin{itemize}

\item if $q_i=p$, then we have  $q_i^{t_i}=p^{s+1}$, as $q_i^{t_i}<2b<4p^s$;

\item if $q_i\neq p$, by Lemma \ref{lemma:min-order_p} (note that $t_i=\min\{j\ \big |\ \varphi_G(q_i^j)\neq \varphi_H(q_i^j)\}$ and that $\varphi_G(q_i^{t_i})< \varphi_H(q_i^{t_i})$ as $b_i=q_i^{t_i}\in \mathcal{E}_1\setminus \mathcal{E}_1^G$),  we have $\beta_i>t_i$;

\item we have $\{b_i,b_{i+1}\}\neq \{2,3\}$, as $b_i>q^t\geq 2$.

\end{itemize}
We distinguish the following three cases.

{\bf Case 1:} Assume that $b_{i+1}\neq p^{s+1}$.  We first show that $n\left(\frac{1}{b_i} - \frac{1}{b_{i+1}}\right) \ge 3.$
Indeed, if $n$ has at least three distinct prime divisors, then the above inequality holds.

If $b_i\neq p^{s+1}$, as mentioned above, we have $\beta_i>t_i$. Therefore, $p$, $q_i$, and $q_{i+1}$ are different prime divisors of $n$.

If $b_i= p^{s+1},$  we have $q$, $p$, and $q_{i+1}$ are   different prime divisors  of $n.$ Moreover, we have $q^t|n_i$ and  $\frac{n_i}{q_i}\geq \frac{q^t}{q_i} \geq \frac{q^t}{p^s}>1$, where $q_i=p$.

Therefore, in all cases, we have $n(\frac{1}{b_i}-\frac{1}{b_{i+1}})\geq 3$ and  $\Delta_{n,m}(b_i,b_{i+1})\geq 1$. By Lemma \ref{lemma:Impartant_estimation}.(i), it follows that
    $$\begin{array}{rcl}
     \displaystyle b_i \binom{\frac{n+m}{b_i}}{\frac{n}{b_i}, \frac{m}{b_i}}    >2 \Delta_{n,m}(b_{i},b_{i+1}) q_{i+1}^{\beta_{i+1}}\binom{\frac{n+m}{b_{i+1}}}{\frac{n}{b_{i+1}}, \frac{m}{b_{i+1}}}\geq 2 q_{i+1}^{\beta_{i+1}}\binom{\frac{n+m}{b_{i+1}}}{\frac{n}{b_{i+1}}, \frac{m}{b_{i+1}}}.
\end{array}
$$
Since $b_{i+1}< q_{i+1}^{\beta_{i+1}}$ and $\varphi_H(b_{i+1})< q_{i+1}^{\beta_{i+1}}$, \eqref{eq:summand_term_i} follows.

{\bf Case 2:}  Assume that $b_{i+1}=p^{s+1}$ and $s\geq 2$. Then \(q_{i+1}=p\) and \(t_{i+1}=s+1\).  By Lemma \ref{lemma:p_order_estimation}, applied to the sign change
\(
  \varphi_G(p^s)>\varphi_H(p^s),\
  \varphi_G(p^{s+1})<\varphi_H(p^{s+1}),
\)
we have
\begin{equation}\label{eq:beta-p-lower}
  \beta_{i+1}=v_p(n)\ge s+2.
\end{equation}
Moreover, \(q_i\ne p\).  Indeed, we have \(
  p^s=a<b_i<b_{i+1}=p^{s+1}.
\)

By the definition of \(b\), and by the construction of
\(\mathcal E_1\setminus \mathcal E_1^G\), the discrepancy at
\(b_i=q_i^{t_i}\) is the first discrepancy for the prime \(q_i\), and it is negative.
Hence Lemma \ref{lemma:min-order_p} gives
\begin{equation}\label{eq:beta-i-lower}
  \beta_i=v_{q_i}(n)>t_i.
\end{equation}
Also, since both \(b_i\) and \(b_{i+1}\) divide \((n,m)\), we have
\begin{equation}\label{eq:delta-lower}
  \delta_i=v_{q_i}(m)\ge t_i,
  \qquad
  \delta_{i+1}=v_p(m)\ge s+1.
\end{equation}
Write
\[
  n=q_i^{\beta_i}p^{\beta_{i+1}}n_i,
  \qquad
  m=q_i^{\delta_i}p^{\delta_{i+1}}m_i,
\]
where \((n_i,q_ip)=(m_i,q_ip)=1\).  Then
\[
  n\left(\frac1{b_i}-\frac1{b_{i+1}}\right)
  =n\left(\frac1{q_i^{t_i}}-\frac1{p^{s+1}}\right)
  =p^{\beta_{i+1}-s-1}q_i^{\beta_i-t_i}n_i
    \left(p^{s+1}-q_i^{t_i}\right).
\]
The last factor is a positive integer because \(b_i<b_{i+1}\).  By
\eqref{eq:beta-p-lower}, \eqref{eq:beta-i-lower}, and \(q_i\ne p\), the right-hand side is at
least
\(
  p q_i\ge 6.
\)
In particular,
\begin{equation}\label{eq:n-gap-ge3}
  n\left(\frac1{b_i}-\frac1{b_{i+1}}\right)
  \ge 3.
\end{equation}
Furthermore, with the notation of Lemma \ref{lemma:Impartant_estimation} applied to
\(a=b_i=q_i^{t_i}\) and \(b=b_{i+1}=p^{s+1}\), we have
\begin{equation}\label{eq:Delta-bi-bip1}
  \Delta_{n,m}(b_i,b_{i+1})
  =q_i^{\beta_i+\delta_i-2t_i-1}
   p^{\delta_{i+1}-s-1}n_i m_i.
\end{equation}
The exponents in \eqref{eq:Delta-bi-bip1} are nonnegative, since
\[
  \beta_i\ge t_i+1,
  \qquad
  \delta_i\ge t_i,
  \qquad
  \delta_{i+1}\ge s+1.
\]
Therefore
\begin{equation}\label{eq:Delta-ge1}
  \Delta_{n,m}(b_i,b_{i+1})\ge 1.
\end{equation}
Also \(\{b_i,b_{i+1}\}\ne \{2,3\}\), because \(s\ge 2\) and \(b_i>a=p^s\).  Hence Lemma \ref{lemma:Impartant_estimation}.(i), together with
\eqref{eq:n-gap-ge3} and \eqref{eq:Delta-ge1}, gives
\begin{align*}
  b_i\binom{(n+m)/b_{i}}{n/b_{i}}
  &>2\Delta_{n,m}(b_i,b_{i+1})p^{\beta_{i+1}}
    \binom{(n+m)/b_{i+1}}{n/b_{i+1}} \\
  &\ge 2p^{\beta_{i+1}}\binom{(n+m)/b_{i+1}}{n/b_{i+1}}.
\end{align*}
On the other hand,
\(
  b_{i+1}=p^{s+1}<p^{\beta_{i+1}}
\)
by \eqref{eq:beta-p-lower}, and trivially
\(
  \varphi_H(b_{i+1})<p^{\beta_{i+1}}
\).  Thus
\[
2p^{\beta_{i+1}}>b_{i+1}+\varphi_H(b_{i+1}).
\]
It follows that
\[
  b_i\binom{(n+m)/b_{i}}{n/b_{i}}
  >\bigl(b_{i+1}+\varphi_H(b_{i+1})\bigr)
   \binom{(n+m)/b_{i+1}}{n/b_{i+1}},
\]
which is exactly \eqref{eq:summand_term_i}.

{\bf Case 3:} Assume that $b_{i+1}=p^{s+1}$ and $s=1$. In this case, we have $p\leq 3$. In fact, as $b<2p^s \le p^{s+1}<2b$, we have $p/2=p^{s+1}/2p^s<2b/b=2$.

{\bf Subcase 3.1:} Assume that $p=2$. Then $a = p^s = 2$ and $b = q^t = 3$ and hence $\mathcal{E}_{1}\setminus \mathcal{E}_1^{G}\subseteq \{4,5\}$. Therefore, it suffices to consider the case  $b_{0}=q^{t_0}=3$ and $b_{1}=p^{s+1}=4.$ Note that $n(\frac{1}{a}-\frac{1}{b})\geq 6,$ which implies that $n\geq 36$.  Consequently, $n(\frac{1}{b_0}-\frac{1}{b_{1}})\geq \frac{36}{12}=3$.
As $\Delta_{n,m}(b_0,b_{1})\geq 1$, by Lemma \ref{lemma:Impartant_estimation}.(i),
    $$\begin{array}{rcl}
     \displaystyle b_{0} \binom{\frac{n+m}{b_0}}{\frac{n}{b_0}, \frac{m}{b_0}}    >2\Delta_{n,m}(b_{0},b_{1}) 2^{\beta_{1}}\binom{\frac{n+m}{b_{1}}}{\frac{n}{b_{1}}, \frac{m}{b_{1}}}\geq 2\cdot 2^{\beta_{1}}\binom{\frac{n+m}{b_{1}}}{\frac{n}{b_{1}}, \frac{m}{b_{1}}}.
\end{array}
$$
Since  $b_{1}< 2^{\beta_{1}}$ and $\varphi_H(b_{1})< 2^{\beta _{1}}$, \eqref{eq:summand_term_i} follows.


{\bf Subcase 3.2:} Assume that $p=3$. In this case, $q^t=5$. As a result, we have $a=p^s=3$ and $b=q^t=5$, and $\mathcal{E}_{1}\setminus \mathcal{E}_1^G\subseteq \{ 7,8,9\}.$ It suffices to consider the following three cases
$$
(1):\ \{b_0,b_{1}\}=\{5,9\},\quad (2):\ \{b_0,b_{1}\}=\{7,9\},\quad (3):\ \{b_0,b_{1}\}=\{8,9\}.$$
Since $\beta_0 > t_0$, we have $\Delta_{n,m}(b_0,b_1) \ge 1$.  In case (1), as $5 \mid n$ and $9 \mid n$, we obtain $n(\frac{1}{5}-\frac{1}{9})\geq 4$ . For the cases (2) and (3), the integer $n$ has at least three different prime divisors. Therefore, we always have $n(\frac{1}{b_0}-\frac{1}{b_{1}})\geq 3$. By Lemma \ref{lemma:Impartant_estimation}.(i),
$$\begin{array}{rcl}
     \displaystyle b_{0} \binom{\frac{n+m}{b_0}}{\frac{n}{b_0}, \frac{m}{b_0}}    >2\Delta_{n,m}(b_{0},b_{1}) 3^{\beta_{1}}\binom{\frac{n+m}{b_{1}}}{\frac{n}{b_{1}}, \frac{m}{b_{1}}}\geq 2\cdot 3^{\beta_{1}}\binom{\frac{n+m}{b_{1}}}{\frac{n}{b_{1}}, \frac{m}{b_{1}}}.
\end{array}
$$
Since $b_{1}< 3^{\beta_{1}}$ and $\varphi_H(b_{1})< 3^{\beta_{1}}$, \eqref{eq:summand_term_i} follows. This completes the proof.
\end{proof}

\section*{Acknowledgments}
We are grateful to Prof. Shishuo Fu and Prof. Alfred Geroldinger for very helpful suggestions and comments. This work was supported by Guangdong Basic and Applied Basic Research Foundation Grant No.~2024A1515012564 and the National Natural Science Foundation of China Grant No.~12131011.

\medskip
\begingroup\small
\noindent Department of Mathematics, Southwest Jiaotong University, Chengdu 610000, P.R. China

\noindent \emph{E-mail address:} handongchun@swjtu.edu.cn

\smallskip
\noindent School of Mathematics (Zhuhai), Sun Yat-sen University, Zhuhai 519082, Guangdong, P.R. China

\noindent \emph{E-mail addresses:} wangx728@mail2.sysu.edu.cn, zhanghb68@mail.sysu.edu.cn

\smallskip
\noindent Department of Mathematics, Southern University of Science and Technology, Shenzhen 518055, Guangdong, P.R. China

\noindent \emph{E-mail address:} 12431019@mail.sustech.edu.cn
\endgroup

\end{document}